\documentclass[11pt, letter]{article}

\usepackage[margin=1.1in]{geometry} 
\usepackage{parskip}
\usepackage{longtable,color,natbib}
\usepackage{booktabs} 
\usepackage{array} 
\usepackage{paralist} 
\usepackage{verbatim} 
\usepackage{appendix}
\usepackage{amssymb}
\usepackage{amsthm}
\usepackage{algorithm,algorithmic}
\usepackage{amsmath}
\usepackage{bbm}
\usepackage{enumitem}
\usepackage{mdframed}
\usepackage{adjustbox}
\usepackage{mathrsfs}

\definecolor{linkcolor}{rgb}{0.1,0,0.7}
\definecolor{urlcolor}{rgb}{1,0,0}
\usepackage[unicode=true,colorlinks,citecolor=linkcolor,linkcolor=linkcolor,urlcolor=blue]{hyperref}
\usepackage{graphicx} 
\usepackage{epstopdf}
\usepackage{bm}
\usepackage{cleveref}
\usepackage{pst-node}
\psset{arrowscale=2,arrows=->}

\usepackage{wrapfig}
\usepackage{multirow}
\usepackage{tabularx}
\usepackage[all,cmtip]{xy}
\usepackage{bm}

\usepackage{subcaption}
\usepackage{graphicx}

\usepackage{tikz} 
\usetikzlibrary{positioning, arrows.meta}
\usetikzlibrary{calc}

\tikzset{
  frame/.style={
    rectangle, draw,
    text width=6em, text centered,
    minimum height=4em,drop shadow,fill=white,
    rounded corners,
  },
  line/.style={
    draw, -latex',rounded corners=3mm,
  }
}

\newtheorem{Theorem}{Theorem}[section]
\newtheorem{Definition}[Theorem]{Definition}
\newtheorem{Proposition}[Theorem]{Proposition}
\newtheorem{Lemma}[Theorem]{Lemma}
\newtheorem{Corollary}[Theorem]{Corollary}
\newtheorem{Remark}[Theorem]{Remark}

\newtheorem{Assumption}[Theorem]{Assumption}

\numberwithin{equation}{section}
\usepackage{hhline}
\usepackage{verbatim}
\usepackage{eurosym}

\newcommand{\nc}{\newcommand}
\nc{\ind}{\mathds{1}}
\def \trans{^{\scriptscriptstyle{\intercal}}}

\newcommand{\R}{\mathbb{R}}

\newcommand{\E}{\mathcal{E}}
\newcommand{\F}{\mathcal{F}}

\newcommand{\bP}{\mathbb{P}}
\newcommand{\G}{\mathcal{G}}

\allowdisplaybreaks[4]

\DeclareMathOperator{\esssup}{esssup}

\def\esssup_#1{\underset{#1}{\mathrm{ess\,sup\, }}}
\def\essinf_#1{\underset{#1}{\mathrm{ess\,inf\, }}}
\def\argmax_#1{\underset{#1}{\mathrm{arg\,max\, }}}
\def\argmin_#1{\underset{#1}{\mathrm{arg\,min\, }}}

\def\b1{\bf 1}

\def \N{\mathbb{N}}
\def \R{\mathbb{R}}

\def \E{\mathbb{E}}
\def \F{\mathbb{F}}
\def \G{\mathbb{G}}

\def \P{\mathbb{P}}

\def \Ac{{\cal A}}

\def \Cc{{\cal C}}
\def \Dc{{\cal D}}
\def \Ec{{\cal E}}
\def \Fc{{\cal F}}
\def \Gc{{\cal G}}
\def \Hc{{\cal H}}
\def \Ic{{\cal I}}

\def \Lc{{\cal L}}
\def \Pc{{\cal P}}

 \def \Nc{{\cal N}}
 
\def \Sc{{\cal S}}
\def \Tc{{\cal T}}
\def \Uc{{\cal U}}
\def \Vc{{\cal V}}
\def \Wc{{\cal W}}

\def \Vc{{\cal V}}

\def \Xc{{\cal X}}

\def\eqref#1{{\rm(\ref{#1})}}
\def\beqs{\begin{eqnarray*}}
\def\enqs{\end{eqnarray*}}
\def\beq{\begin{eqnarray}}
\def\enq{\end{eqnarray}}

\begin{document}

\title{Continuous-time q-learning for mean-field control with\\ common noise, part-I: Theoretical foundations}

\author{Zhenjie Ren \thanks{Email: zhenjie.ren@univ-evry.fr, LaMME, Universit\'e \'Evry Paris-Saclay, \'Evry, France.}
\and
Xiaoli Wei \thanks{Email: tyswxl@gmail.com.}
\and
Xiang Yu \thanks{Email: xiang.yu@polyu.edu.hk, Department of Applied Mathematics, The Hong Kong Polytechnic University, Kowloon, Hong Kong.}
\and
Xun Yu Zhou \thanks{Email: xz2574@columbia.edu, Department of Industrial Engineering and Operations Research, Columbia University, New York, USA.}
}
\date{This version: April 30, 2026}

\maketitle

\begin{abstract}
This paper investigates the continuous-time counterpart of the Q-function for entropy-regularized mean-field control (MFC) with controlled common noise, coined as q-function by \cite{jiazhou2022} in the single agent's model. We first show that, under discretely sampled actions, the value function in the exploratory formulation converges to the one in the relaxed control formulation as the time grid refines. Leveraging the relaxed control formulation, we derive the exploratory Hamilton-Jacobi-Bellman (HJB) equation, in which the controlled common noise gives rise to an additional nonlinear functional of policy, rendering the policy iteration intricate. Under certain concavity condition, we establish the existence and uniqueness of the optimal one-step policy iteration via a first-order condition using the partial linear functional derivative with respect to policy. The policy improvement at each iteration is verified by relating to an entropy-regularized optimization problem over the space of policies. In the mean-field setting, we introduce the integrated q-function (Iq-function) defined on the state distribution and the policy, and it is shown that an optimal policy is identified as a two-layer fixed point to the argmax operator of the Iq-function. Finally, we provide the explicit characterization of an optimal policy as a Gaussian distribution in the general linear-quadratic (LQ) setting.

\ \\
\textbf{Keywords}: Continuous-time reinforcement learning,  mean-field control, common noise, policy improvement, integrated q-function, two-layer fixed point
\end{abstract}

\vspace{0.15in}
\section{Introduction}
Decision making for a large population system with interacting agents in a competitive or cooperative manner has wide applications across finance, systemic risk control, epidemic control, robot swarms,  traffic management, among others. The main challenge in the large system is to understand the coupled influence of decision making on the behavior of all agents. To overcome this complexity and the curse of dimensionality in numerical implementations, the mean-field approximation of the population's state, proposed independently by \cite{LL} and \cite{Huangetal2006}, has been extensively studied over the past decades. The mean-field formulation allows one to study the weak interaction between one representative agent and the population rather than the coupled interactions between agents. Mean-field game (MFG) and mean-field control (MFC) problems have been proposed and developed to model the competitive and cooperative interactions, respectively, see \cite{Car13} for the discussion on their relationship and distinctions. See also \cite{CarD,CarD2} for an extensive overview of existing studies in these two types of problems.

In the present paper, we are interested in MFC problems where a social planner coordinates all agents to optimize the aggregated reward function that leads to the social optimum of the population. 
More importantly, we incorporate the Brownian common noise in the mean-field system that affects the entire population simultaneously. The study of common noise in large population system has recently attracted a lot of attention and spurred new advances in mean-field theories, which can effectively describe the exogenous random risk acting towards the whole system. Unlike the idiosyncratic noise that only affects the specific state dynamics of an individual agent, the presence of common noise leads to the mean-field interaction via the conditional population distribution given the common noise. Thereby, in MFC problems, we encounter the conditional population distribution as a measure-valued process that calls for It\^{o}'s formula along conditional measure flows and the stochastic Fokker-Planck equation, giving rise to many new technical challenges, see \cite{G16}, \cite{phamwei2017}, \cite{BCL2021}, \cite{DPT2022}, \cite{mottepham2022}, \cite{CQB23}, \cite{Zhou2024} for some recent developments in MFC problems with common noise.

Conventional methods to solve the classical stochastic control and MFC problems typically assume the full knowledge or precise estimations of the underlying model. However, in reality, the agent or the social planner may only have little or no information of the environment. The limited information of unknown environment may cause huge errors or inefficiency in implementing the theoretical solutions. This motivated an upsurge of interests in studying the reinforcement learning (RL) algorithms from the classical single agent's model to the large stochastic systems. Based on the trial-and-error interactions with the unknown environment, the decision maker can gradually learn to select best actions from the procedure of exploration and exploitation. Albeit the substantial success of RL algorithms in wide applications, the theoretical study of RL has been predominantly limited to discrete time models. However, many real-world applications, particularly in finance and engineering, evolve continuously through time. Recently, for continuous-time stochastic control problems by a single agent, \cite{Wangetal2021}, \cite{JZ22a, JZ22b, jiazhou2022} have laid the theoretical foundation for RL with entropy regularization with continuous state space and action space. To learn an optimal policy in a continuous-time setting requires the shift from the discrete Bellman equation in conventional RL studies to its continuous-time counterpart, the Hamilton-Jacobi-Bellman (HJB) equation. In particular, \cite{Wangetal2021} studied the optimal policy  in an entropy-regularized exploratory RL framework for diffusion processes. \cite{JZ22a} examined the policy evaluation problem by establishing a martingale condition of the value function. \cite{JZ22b} studied the policy gradient algorithm by connecting it to the martingale approach in \cite{JZ22a}. \cite{jiazhou2022} developed the continuous-time q-learning theory by introducing the q-function as the first order time derivative of the advantage function and establishing a joint martingale characterization of the q-function and the value function. This continuous-time entropy-regularized RL method has been rapidly generalized in various context of single agent's control problems. For example, \cite{W23} proposed an actor-critic RL algorithm for optimal execution in continuous-time Almgren-Chriss model;  \cite{Han23} considered the Choquet entropy regularization for the RL exploration and explored the distribution of the optimal policy in the LQ framework; \cite{Dong2022} examined the entropy-regularized RL method for optimal stopping problems; \cite{DDJ} studied the policy iteration algorithms to learn the time-consistent equilibrium policy for mean-variance portfolio optimization problems; \cite{BHY23} generalized the q-learning algorithm for reflected diffusion dynamics and applied the algorithm in solving the optimal tracking portfolio problem; \cite{DDJZ2023} considered the recursive entropy regularization and developed the RL algorithm to learn Merton's optimal strategy in an incomplete market model; \cite{HLYZ25} examined the continuous-time reinforcement learning approach for optimal switching over multiple regimes; \cite{J26} investigated the continuous-time risk-sensitive reinforcement learning using the quadratic variation penalty. 

Comparing with the large volume of studies in continuous-time RL for single agent's control problems, the investigations of continuous-time RL for MFG and MFC are relatively underdeveloped. In the LQ-MFG, \cite{GuoXZ} examined the theoretical justification that entropy regularization helps stabilizing and accelerating the convergence to the Nash equilibrium. \cite{FGLPS23} generalized the policy gradient algorithm in \cite{JZ22b} to continuous time MFC problems and devised actor-critic algorithms based on a gradient expectation representation of the value function. \cite{Liangetal2024} similarly generalized the actor-critic policy gradient algorithms to continuous-time MFG problems together with fictitious play to update the population distribution. As a first attempt to generalize the continuous-time q-learning to MFC problems, \cite{weiyu2023} recently proposed the integrated q-function (Iq-function) and the essential q-function, which facilitate the design of q-learning algorithms for MFC without common noise from the social planner's perspective. \cite{weiyuyuan2023} further discussed the proper Iq-function in decoupled form and proposed unified q-learning algorithms for both MFG and MFC problems without common noise from the representative agent's perspective.

Similar to \cite{weiyu2023} in the setting without common noise, it is assumed in the present paper that the social planner is responsible to learn a policy that maximizes the collective profit of the population. That is, the social planner assigns randomized policies to agents who interact with the environment based on their own current states and population's state distribution. Based on agents' interactions, the social planner collects the population's distribution and agent's individual rewards to generate the population's aggregated reward and iterate the policies accordingly to learn the optimal policy. A typical example of such setting with learning social planner could be the centralized traffic management system while the population of agents could refer to the drivers and public transit users. The unknown environment is the city's road network and the learning task of the social planner is to dynamically adjust the timing and phasing of all interconnected traffic lights; set speed limits; or implement dynamic congestion pricing.  See also Appendix \ref{appendix:N-player} for a brief description of the interactions of $N$ players in the RL setting that motivates the RL formulation of the MFC problem.

\subsection{Our contributions}
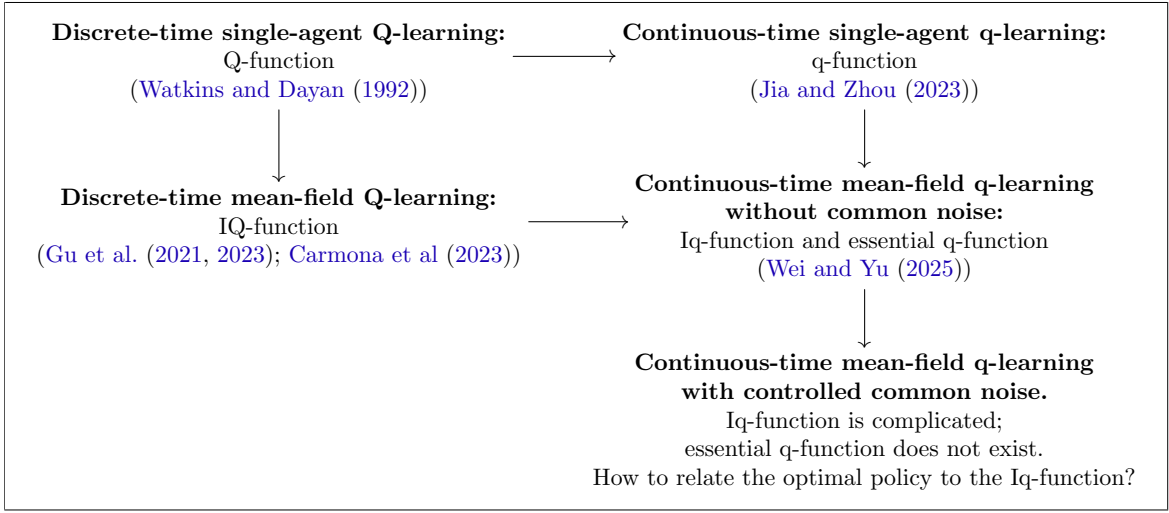
\begin{figure}[h]
\centering
\adjustbox{varwidth=\linewidth,scale=0.9}{
\begin{mdframed}
\xymatrix{
\txt{\small{\textbf{Discrete-time single-agent Q-learning:}}\\\small{Q-function} \\ \small{(\cite{W92})}}\ar@[black][d] \ar@[black][r] & \txt{\small{\textbf{Continuous-time single-agent q-learning:}}\\\small{q-function} \\\small{(\cite{jiazhou2022})}} \ar@[black][d]\\
\txt{\small{\textbf{Discrete-time mean-field Q-learning:}}\\\small{{IQ-function}}\\ \small{(\cite{GGWX21, GGWX23, CLT23})} } \ar@[black][r]   & \txt{\small{{\textbf{Continuous-time mean-field q-learning}}}\\\small{\textbf{without common noise:}}\\\small{Iq-function and essential q-function}\\ \small{(\cite{weiyu2023})}} \ar@[black][d]\\
\ \ & \txt{\small{\textbf{Continuous-time mean-field q-learning}}\\\small{\textbf{with controlled common noise.}}\\\small{Iq-function is complicated;}\\\small{essential q-function does not exist.}\\\small{How to relate the optimal policy to the Iq-function?}}}
\end{mdframed}
\caption{\centering Conceptual relationship to the literature}\label{F1}}
\end{figure}

The goal of the present paper is to investigate the correct form of Iq-function and develop some q-learning theories in the presence of controlled common noise, see the illustration of our research motivation in Figure \ref{F1}. The design of q-learning algorithms as well as establishing some supporting theories, such as the martingale condition of the Iq-function and value function, are studied in our accompany paper \cite{RWYZ25}. 

The controlled common noise creates many new difficulties in the learning procedure. First, identifying an appropriate relaxed control formulation suitable for theoretical analysis in the presence of controlled common noise remains an open problem. In particular, we need to introduce additional Brownian motions in the relaxed control formulation (see \eqref{equ:common-noise-average}) so that the joint law of the state and the common noise $\Lc(X, B)$ coincides with that of the exploratory formulation. This ensures the consistency of the conditional state distributions $\Lc(X|B)$ in two formulations. Second, It\^{o}'s formula on the flow of conditional probability measure yields a more sophisticated exploratory HJB equation with an additional nonlinear functional of policy, see the PDE \eqref{equ:dynamic-programming-equation}. As a result, the one-step policy iteration by the first-order condition no longer admits an explicit form, in sharp contrast to the explicit Gibbs measure iteration rule in \cite{jiazhou2022, weiyu2023}. This nonlinear functional of policy pins down how common noise may fundamentally complicate the iteration in learning. In fact, whether the policy improvement under this implicit iteration operator holds or not deserves some careful investigations. By some heuristic computations, we shall expect to see that the definition of the Iq-function in the present paper also involves this nonlinear functional of policy, see Definition \ref{def:coupled-q-function}. As the optimal policy is no longer explicitly related to the Hamiltonian in \eqref{operatorH}, how to learn the optimal policy via the learnt Iq-function is a key issue to address in devising some RL algorithms. We summarize the main contributions of the present paper as follows:
\begin{itemize}
\item[(i)] To cope with the additional nonlinear functional of policy in the exploratory HJB equation caused by controlled common noise, we establish a rigorous characterization of the unique optimal policy as a two-layer fixed point to an iteration operator using the notion of the partial linear functional derivative  with respect to the policy, see Theorem \ref{lemma:first-order} and Corollary \ref{cor:policy-improvement}. Moreover, the policy iteration operator is implicit as for a given policy, to exercise the optimal one-step policy iteration, one needs to solve a fixed point problem stemming from the first order condition equation in \eqref{equ:first-order}, which differs significantly from previous results in \cite{jiazhou2022, weiyu2023}. Thanks to some proper concavity condition, we can also prove the policy improvement result for this implicit policy iteration operator, see Theorem \ref{thm:policy_improvement}.

\item[(ii)] Using the definition of IQ-function in the discrete-time MFC model and It\^{o}'s lemma along the flow of conditional probability measures, we derive the proper definition of the Iq-function in Definition \ref{def:coupled-q-function}, which also carries the nonlinear functional of policy. It then follows from the previous first-order condition that the optimal policy is related to a two-layer fixed point of the argmax operator of the Iq-function. Equivalently, we show that it also corresponds to a two-layer fixed point of the operator in the Gibbs measure form using the partial linear functional derivative of the unregularized Iq-function with respect to the policy (see the expression \eqref{twofix} in Corollary \ref{corollary:optimal-policy}).

\item[(iii)] In the general LQ setting, we pioneer the explicit characterization of the optimal policy as a two-layer fixed point of the implicit policy iteration operator, which is shown to be a Gaussian distribution; see Theorem \ref{thm:LQ} and the optimal policy in \eqref{LQ-optimal-policy}. This justifies the ad hoc choice of Gaussian policy in learning tasks even in the presence of common noise.

\end{itemize}

The remainder of the paper is organized as follows. Section \ref{sec:formulation} reviews the classical MFC problem with common noise and introduces its relaxed control and exploratory formulations. Section \ref{sec:policy-improvement} gives the characterization of an optimal policy as the two-layer fixed point to an implicit iteration operator and establishes the policy improvement result. Section \ref{sec:q-function} proposes the proper definition of the continuous-time Iq-function and characterizes an optimal policy as the two-layer fixed point to the argmax operator of the Iq-function. Section \ref{sec:LQ} investigates the general LQ-MFC problem with controlled common noise and establishes the explicit characterization of the two-layer fixed point as a Gaussian policy. Finally, Appendix \ref{appendix:N-player} briefly discusses the continuous-time RL for the cooperative $N$-player game and Appendix \ref{sec:exploratory-average} presents the heuristic derivation of the relaxed control formulation.

\ \\
\paragraph{Notations} Given two Polish spaces $(S, \Sc)$ and $(T, \Tc)$, for any $p >0$, we denote by $\Pc_p(S)$ the space of all probability measures with finite $p$-th moment on $S$ and equip $\Pc_p(S)$ with the $p$-Wasserstein metric $\Wc_p$ defined by
\begin{align*}
\Wc_p(\mu, \nu) = \inf \Big\{\Big(\int_{S \times S} |x-y|^p \gamma(dx, dy)\Big)^{1/p}: \gamma \in \Pc_p(S \times S)\; \mbox {has marginals} \; \mu \; \mbox{and} \; \nu\Big\}.
\end{align*}
For $\mu \in \Pc_p(\R^d)$, denote $\|\mu\|_p: = \big(\int_{\R^d} |x|^p \mu(dx)\big)^{1/p}$. Let $\Pc_{ac}(S)$ be the space of probability measures on $S$ that are absolutely continuous with respect to the Lebesgue measure. We denote by $\Pc(T|S)$ (resp. $\Pc_{ac}(T|S)$) the space of all probability kernels from $S$ to $\Pc(T)$ (resp. $\Pc_{ac}(T)$). $L^2(\Omega, \Fc, \P; \R^d)$ denotes the space of all $\Fc$-adapted $\R^d$-valued square-integrable random variables on the probability space $(\Omega, \Fc, \P)$. For any measurable function $g:S\to \R^k$, we denote $\int_{S} g(x)\mu(dx)$ by $\langle g, \mu\rangle$. For a functional $F: \Pc_2(S) \to \R$, $\partial_\mu F(\mu)(x)$, $\partial_x\partial_\mu F(\mu)(x)$ and $\partial_\mu^2 F(\mu)(x, x')$ stand for the $L$ derivative in $\mu$, the mixed second-order derivative with respect to $\mu$ and $x$, and the second-order derivative in measure $\mu$, respectively (see  Definition 5.22 in \cite{CarD}). We use $\Nc(\mu, \Sigma)$ and $\Uc([p, q])$ for a Gaussian distribution with mean $\mu$ and covariance $\Sigma$ and a uniform distribution on $[p, q]$, respectively.

\section{Problem Formulation}\label{sec:formulation}
\subsection{Classical MFC problem with common noise}\label{sec:classical-mfc}
Let $(\Omega, \Fc, \P)$ be a complete probability space with a product structure $(\Omega^0 \times \Omega^1, \Fc^0 \otimes \Fc^1, \P^0 \otimes \P^1)$, where $(\Omega^1, \Fc^1, \P^1)$ supports a  $m$-dimensional Brownian motion $W=(W_s)_{s \in [0, T]}$ and $(\Omega^0, \Fc^0, \P^0)$ supports a $n$-dimensional Brownian motion $B = (B_s)_{s \in [0, T]}$ with $B$ serving as common noise. We consider two filtrations $\F^{W, B} = (\Fc_t^{W, B})_{0 \leq t \leq T}$ and $\G = (\Gc_t)_{0 \leq t \leq T}$ defined by $\Fc_t^{W,B} := \sigma(W_s, B_s: s \in [0, t])$ and $\Gc_t := \sigma(B_s: s \in [0, t])$, respectively. It is assumed that there exists a sub-algebra $\Hc$ of $\Fc^1$ such that $\Hc$ is independent of $\F^{W, B}$ and it is ``rich enough" in the sense that for any $\mu \in \Pc_2(\R^d)$, there exists an $\Hc$-measurable random variable $\xi$ on $(\Omega^1, \P^1)$ such that $\P^1 \circ \xi^{-1} = \mu$.
Let $\F = (\Fc_s)_{s \geq 0}$ be the filtration $\Fc_s = \Fc_s^{W, B} \vee \Hc$. 

We consider a MFC problem by the social planner, for which the representative agent's state process $\{X_s\}_{s \geq t}$, taking values in $\R^d$, is described by the controlled conditional McKean-Vlasov SDE that
\begin{align}
dX_s &= b(s, X_s, \mu_s, a_s) ds  + \sigma(s, X_s, \mu_s, a_s) dW_s + \sigma_o(s, X_s, \mu_s, a_s) dB_s, \label{Xdy}
\end{align}
where $\xi \in L^2(\Omega^1, \Hc, \P^1; \R^d)$ such that $\Lc(\xi|\Gc_t) = \mu  \in \Pc_2(\R^d)$, and $b$, $\sigma$ and $\sigma_o$ are measurable functions valued in $\R^d$, $\R^{d \times m}$ and $\R^{d \times n}$. Recall that, $W$ stands for the idiosyncratic noise for each representative agent and $B$ plays the role of common noise affecting the entire population. The conditional law $\mu_s: = \Lc({X_s}|\Gc_s)$ denotes the conditional regular probability distribution of $X_s$ given $\Gc_s$ that $\mu_s(\omega^0) = \P^1 \circ X_s^{-1}(\omega^0, \cdot)$ for every $\omega^0 \in  \Omega^0$.

The goal of the social planner in the MFC problem is to find an optimal $\F$-progressively measurable control $\{a_s\}_{t \leq s \leq T}$ valued in the space $\Ac$, a closed subset of  $\R^m$, which maximizes the expected discounted total reward that
\begin{align}\label{equ:classical-value}
\E\left[\int_t^T e^{-\beta(s-t)} r(s, X_s, \mu_s, a_s)ds + e^{-\beta (T -t)}g( X_T, \mu_T)\Big| X_t = \xi\right].
\end{align}

To ensure the wellposedness of \eqref{Xdy}-\eqref{equ:classical-value}, we impose the following assumptions.

\begin{Assumption} \label{ass:b-sigma-sigmao}The following conditions for the state dynamics and reward functions hold:
\begin{enumerate}[label=\upshape(\roman*)]
\item $b$, $\sigma$, $\sigma_o$, $r$ are jointly continuous in $(t, x, \mu, a) \in [0, T] \times \R^d \times \Pc_2(\R^d) \times \Ac$, and $g$ is jointly continuous in $(x, \mu) \in \R^d \times \Pc_2(\R^d)$;
\item There exists a constant $C >0$ such that for $f \in \{b, \sigma, \sigma_o\}$, and all $t, t' \in [0, T]$, $x, x' \in \R^d$, $\mu, \mu' \in \Pc_2(\R^d)$, $a \in \Ac$, it holds that
    \begin{align*}
    &|f(t, x, \mu, a) -f(t', x', \mu', a)| \leq C\big(|t- t'| +|x - x'| + \Wc_2(\mu, \mu')\big),\\
    &|r(t, x, \mu, a) - r(t', x', \mu, a)| \leq C\big(1 + |x| + |x'| + \|\mu\|_2 + \|\mu'\|_2\big) \big(|t- t'| + |x - x'| + \Wc_2(\mu, \mu')\big).
    \end{align*}
\item There exists some constant $C > 0$ such that for each $(t, x, \mu, a) \in [0, T] \times \R^d \times \Pc_2(\R^d) \times \Ac$, it holds that
\begin{align*}
|b(t, x, \mu, a)|  & \leq C\Big(1 + |x| + \|\mu\|_2 + |a|\Big),\\
\big|(\sigma\sigma\trans + \sigma_o\sigma_o\trans)(t, x, \mu, a)\big| &\leq  C\big(1 + |x|^2 + \|\mu\|_2^2 + |a|^2\big),\\
|r(t, x, \mu, a)| + |g(x, \mu)| & \leq C\big(1 + |x|^2 + \|\mu\|_2^2 + |a|^2\big).
\end{align*}
\end{enumerate}
\end{Assumption}

\subsection{Relaxed control formulation of MFC with common noise}
It is assumed that the mean-field model is unknown, i.e., we do not have the exact information of the model coefficients $b$, $\sigma$ and $\sigma_o$ in state dynamics \eqref{Xdy} and the reward function $r$ in \eqref{equ:classical-value}. To determine the optimal control in an unknown model, we choose to apply the RL approach based on the principle of trial-and-error recovery. To capture the exploration step in RL, we randomize the action and consider its distribution as a relaxed control. Therefore, it is necessary to extend the original filtered probability space $(\Omega, \Fc, \F, \P)$ for the purpose of the action randomization and consider another atomless probability space $(\Omega^2, \Fc^2, \P^2)$ that supports an $\Fc^2$-measurable random variables $U_0$ with uniform distribution on $[0, 1]$. By standard separation of the decimals of $U_0$ (c.f. Lemma 2.21 in \cite{Kall2002}), there exists an i.i.d. sequence of $\mathcal{F}^2$-adapted uniform random variables $(U_i)_{i \in \mathbb{N}}$, independent of $\xi$, $W$ and $B$. Denote $(\Omega^e, \Fc^e, \F^e, \P^e): = (\Omega \times \Omega^2, \Fc \otimes \Fc^2, \P \otimes \P^2)$ where $\F^e = (\Fc^e_t)_{0 \leq t\leq T}$ and $\Fc_t^e = \Fc_t \vee \sigma(U_i, t_i \leq t)$. Accordingly, for an element $\omega^e \in \Omega^e$, we write it as $\omega^e=(\omega, \omega^2) \in \Omega \times \Omega^2$, and we extend canonically $W$ and $B$ on $\Omega$ by setting $W(\omega^e): = W(\omega)$ and $B(\omega^e): = B(\omega)$. Any random variable on $\Omega$ can be extended similarly to that on $\Omega^e$. $\E^e$ stands for the expectation under $\P^e$.

We first introduce the relaxed control formulation for theoretical analysis.  Let $\Pi$ stand for the set of admissible policies satisfying the following definition.

\begin{Definition} \label{def:pi}A policy ${\bm \pi}$ is called admissible if
\begin{enumerate}[label=\upshape(\roman*)]
\item ${\bm \pi}(\cdot|t, x, \mu) \in \Pc_{ac}(\Ac)$, $\mbox{supp} {\bm \pi}(\cdot|t, x, \mu) = \Ac$ for every $(t, x, \mu) \in [0, T] \times \R^d \times \Pc_2(\R^d)$, and ${\bm \pi}$ is jointly measurable with respect to $(t, x, \mu) \in [0, T] \times \R^d \times \Pc_2(\R^d)$.
\item There exits a constant $C>0$ independent of $(t, a)$ such that for any $\bm \pi \in \Pi$ and any $x, x' \in \R^d$ and $\mu, \mu' \in \Pc_2(\R^d)$
\begin{align*}
\int_{\Ac} |{\bm \pi}(a|t, x, \mu) - {\bm \pi}(a|t, x', \mu')|da \leq C\big(|x - x'| + \Wc_2(\mu, \mu')\big).
\end{align*}
\item There exists some $C > 0$  and $\delta >0$ independent of $(t, a)$ such that for every ${\bm \pi} \in \Pi$
\begin{align*}
\int_{\Ac} |a|^{2 + \delta} {\bm \pi}(a|t, x, \mu)da \leq C(1 + |x|^{2} + \|\mu\|_{2}^{2}).
\end{align*}
\item  There exists a constant $C> 0$ such that for any $\bm \pi \in \Pi$
\begin{align*}
\big|E_{\bm \pi}(t, x, \mu)\big| \leq& C\big( 1 + |x|^2 + \|\mu\|_2^2\big)\\
\big| E_{\bm \pi}(t, x, \mu) - E_{\bm \pi}(t', x', \mu')\big| \leq& C\big(|t-t'| + |x- x'| + \Wc_2(\mu, \mu')\big),
\end{align*}
where the Shannon entropy $E_{\bm \pi}$ is defined by
\begin{align}\label{Shannon}
E_{\bm \pi}(t, x, \mu) = - \int_{\Ac} \log {\bm \pi}(a|t, x, \mu) {\bm \pi}(a|t, x, \mu)da,
\end{align}
\end{enumerate}
\end{Definition}
For $f \in \{b, \sigma, \sigma_o, r\}$, we denote by $f_{\bm \pi}$ the mean of $f$ with respect to ${\bm \pi} \in \Pi$ that
\begin{align}\label{mean-w.r.t-pi}
{f}_{\bm \pi}(t, x, \mu) &:= \int_{\Ac} f(t, x, \mu, a) {\bm \pi}(a|t, x, \mu) da, 
\end{align}
and we denote by ${\rm cov}_{\bm \pi}(f)$ and ${\rm std}_{\bm \pi}(f)$ the covariance and standard deviation of $f$ with respect to ${\bm \pi} \in \Pi$ that
\begin{align}\label{variance-w.r.t-pi}
{\rm cov}_{\bm \pi}(f)(t, x, \mu)  := &\int_{\Ac} ff\trans (t, x, \mu, a){\bm \pi}(a|t, x, \mu)da - f_{{\bm \pi}} f_{{\bm \pi}}\trans(t, x, \mu),\\
{\rm std}_{\bm \pi}(f) := & {\rm cov}_{\bm \pi}(f)^{1/2}. \label{std-w.r.t-pi}
\end{align}
To save notations, we only present the relaxed control formulation for $d=1$ here (see \eqref{equ:multi_exploratory_HJB} in Appendix \ref{sec:exploratory-average} for the case $d > 1$).
\begin{align}
d X_s^{\bm \pi} &= b_{\bm \pi}(s, X_s^{\bm \pi}, \mu_s^{\bm \pi})ds +  \sigma_{\bm \pi}(s, X_s^{\bm \pi}, \mu_s^{\bm \pi})dW_s + \sigma_{o, {\bm \pi}}(s,  X_s^{\bm \pi}, \mu_s^{\bm \pi})dB_t \nonumber\\
 &\;\;\; + {\rm std}_{{\bm \pi}}(\sigma)(s, X_s^{\bm \pi}, \mu_s^{\bm \pi})d{\overline W}_s + {\rm std}_{\bm \pi}(\sigma_o)(s, X_s^{\bm \pi}, \mu_s^{\bm \pi})d {\bar B}_s , \label{equ:common-noise-average}
\end{align}
where $\mu_s^{\bm \pi} = \Lc(X_s^{\bm \pi}|\Gc_s)$, and $\overline W$ and $\bar B$ are {\it extra} one-dimensional Brownian motions independent of $W$ and $B$. Under Assumption \ref{ass:b-sigma-sigmao} and Definition \ref{def:pi}, all coefficients in \eqref{equ:common-noise-average} are Lipschitz continuous in arguments $x$ and $\mu$. Therefore the SDE \eqref{equ:common-noise-average} admits a unique strong solution, see Theorem 5.1.1 in \cite{Stroock1997} or Appendix A in \cite{DPT2022}. 

\begin{Remark}
Due to the controlled common noise, the above relaxed control formulation is different from that in \cite{jiazhou2022, weiyu2023}. We need to introduce auxiliary Brownian motions arising from the action randomiziation in the relaxed formulation; see more details in Appendix \ref{sec:exploratory-average}.
\end{Remark}

To encourage the exploration in the continuous-time framework, we adopt the Shannon differential entropy as in \cite{Wangetal2021} 
and we consider the value function in the relaxed control formulation by
\begin{align}\label{equ:exploratory_average_value_function}
\tilde J(t, \mu; {\bm \pi}) 
& = \E^e\Big[\int_t^T e^{-\beta(s -t)}\Big(\hat r_{\bm \pi} (s, \mu_s^{\bm \pi}) + \gamma \mathcal{E}(s, \mu_s^{\bm \pi}, {\bm \pi})\Big)ds + e^{-\beta (T - t)}\hat g(\mu_T^{\bm \pi})\Big],
\end{align}
where we denote
\begin{align*}
\mathcal{E}(t,\mu, {\bm \pi}) :&=\int_{\R^d} E_{\bm \pi}(t, x, \mu)\mu(dx),\; \hat g(t, \mu) : = \int_{\R^d} g(x, \mu)\mu(dx), \\
 \hat r(t, \mu, {\bm \pi}): &= \int_{\R^d \times \Ac} r(t, x, \mu, a){\bm \pi}(a|t, x, \mu)da\mu(dx).
\end{align*}
The optimal value function is given by
\begin{align}\label{average:optimal-value}
\tilde J^*(t, \mu) = \sup_{{\bm \pi} \in \Pi} \tilde J(t, \mu; {\bm \pi}).
\end{align}

\subsection{Exploratory formulation under discretely sampled actions}\label{sec:exploratory-form}

Although the relaxed control formulation is convenient for the theoretical analysis such as deriving the HJB equation, it cannnot be directly observed in practice. As a result, to develop some implementable algorithms, we need to consider the continuous-time exploratory formulation with sampling. However, as discussed in recent studies among \cite{Szpruch2024, BenderThuan24, jiaetal25, CL25}, continuously sampling procedure requires continuum independent draws from a distribution and may cause some measure-theoretical issues. As a remedy, we next proceed to consider the discretely sampled action processes as in \cite{jiaetal25} in our mean-field setting.

Fix ${\bm \pi} \in \Pi$ and an initial pair $(t, \xi) \in [0, T] \times L^2(\Omega^1, \Hc, \P^1; \R^d)$, we then consider the {\it exploratory discretely sampled state process} for all $0 \leq i \leq n-1$ and $s \in [s_i, s_{i+1}]$ that
\begin{align}\label{equ:exploratory_SDE}
X_s^{\Dc, \bm \pi} &= X_{s_i}^{\Dc, \bm \pi} + \int_{s_i}^s b(u, X_u^{\Dc, \bm \pi}, \mu_u^{\Dc, \bm \pi},  a_{s_i}^{\bm \pi}) du + \int_{s_i}^s \sigma(u, X_u^{\Dc, \bm \pi},  \mu_u^{\Dc, \bm \pi}, a_{s_i}^{\bm \pi}) dW_u\\
& \;\;\;\;\;+ \int_{s_i}^s\sigma_o(u, X_u^{\Dc, \bm \pi},  \mu_u^{\Dc, \bm \pi}, a_{s_i}^{\bm \pi}) dB_u, \nonumber
\end{align}
where $\mu_s^{\Dc, \bm \pi} = \Lc(X_s^{\Dc, \bm \pi}|\Gc_s)$, and $a_{s_i}^{\bm \pi} = \phi_{\bm \pi}(s_i, X_{s_i}^{\Dc, \bm \pi}, \mu_{s_i}^{\Dc, \bm \pi}, U_i)$ $\sim {\bm \pi}(\cdot|s_i, X_{s_i}^{\Dc, \bm \pi}, \mu_{s_i}^{\Dc, \bm \pi})$ for some measurable function $\phi_{\bm \pi}: [0, T] \times \R^n \times \Pc_2(\R^n) \times [0, 1] \to \Ac$ stands for the sampled actions. Under Assumption \ref{ass:b-sigma-sigmao}, the SDE \eqref{equ:exploratory_SDE} is well-posed. We may also rewrite \eqref{equ:exploratory_SDE} as
\begin{align}\label{equ:exploratory_SDE1}
dX_s^{\Dc, \bm \pi} &= b(s, X_s^{\Dc, \bm \pi}, \mu_s^{\Dc, \bm \pi},  a_{\delta(s)}^{\Dc, \bm \pi}) ds + \sigma(s, X_s^{\Dc, \bm \pi}, \mu_s^{\Dc, \bm \pi}, a_{\delta(s)}^{\bm \pi}) dW_s \\
& \;\;\;\;\;+\sigma_o(s, X_s^{\Dc, \bm \pi}, \mu_s^{\Dc, \bm \pi}, a_{\delta(s)}^{\Dc, \bm \pi}) dB_s, \nonumber
\end{align}
where $\delta(s): = s_i$ for $s \in [s_i, s_{i+1})$.

The procedure of RL for MFC is then described as follows. On an arbitrary time gird $\Dc = \{t=s_0 < s_1 < \ldots < s_n = T\}$,  the representative agent at the current state $X_s^{\Dc, \bm \pi}$  observes the current population's conditional state distribution $\mu_s^{\Dc, \bm \pi}$ and takes the sequence of actions $(a_{s_i})_{s_i \in \Dc}$ {\it only at the time grid} in $\Dc$ according to a policy ${\bm \pi} \in \Pi$ assigned by the social planner. The representative agent will receive a stream of running individual rewards and her state evolves according to the sampled SDE in \eqref{equ:exploratory_SDE}. Based on the representative agent's interactions with the unknown environment, the social planner coordinates the population by assigning policies to the representative agent and collecting the population's conditional state distribution and the aggregated reward.

The value function under the time grid $\Dc$ and the policy ${\bm \pi}$ is given by
\begin{align}\label{equ:coupled_value_function}
J^{\Dc}(t, \xi; {\bm \pi})& = \E^e\biggl[\int_t^T e^{-\beta(s-t)} \big(r(s, X_s^{\Dc, \bm \pi}, \mu_s^{\Dc, \bm \pi}, a_{\delta(s)}^{\Dc, \bm \pi})
+ \gamma E_{\bm \pi}(\delta(s), X_{\delta(s)}^{\Dc, \bm \pi}, \mu_{\delta(s)}^{\Dc, \bm \pi}) \big)ds\nonumber\\
&\;\;\;\;\;\; + e^{-\beta (T -t)}g( X_T^{\Dc, \bm \pi}, \mu_T^{\Dc, \bm \pi})\Big| X_t^{\Dc, \bm \pi}= \xi \biggl].
\end{align}
In view of the arbitrariness of $\Dc$, we define the value function by taking the limit over all time grids
$J(t, \xi; {\bm \pi}) = \lim_{|\Dc| \to 0} J^{\Dc}(t, \xi; {\bm \pi})$,
where $|\Dc| := \max_{0 \leq i \leq n-1} |s_{i+1} - s_i|$.
The goal of the social planner is to maximize $J(t, \xi; {\bm \pi})$ that
\begin{align}\label{equ:optimal_value_function}
J^*(t, \xi) = \sup_{{\bm \pi} \in \Pi} J(t, \xi; {\bm \pi}).
\end{align}

\section{Policy Iteration and Policy Improvement}\label{sec:policy-improvement}

\subsection{Relationship between two formulations}

In this subsection, we first investigate the connection between the relaxed control formulation and the limit of the exploratory formulation using discretely sampled actions.

We first recall the following definition that is frequently used in the rest of the paper.
\begin{Definition}
We say that $V:[0, T] \times \Pc_2(\R^d) \to \R$ belongs to $\Cc^{1, 2}([0, T] \times \Pc_2(\R^d))$ if
\begin{itemize}
\item $\frac{\partial V}{\partial t}(t, \mu)$ exists and is jointly continuous in $t, \mu$;
\item $\partial_\mu V(t, \mu)(x)$, $\partial_x\partial_\mu V(t, \mu)(x)$ and $\partial_\mu^2 V(t, \mu)(x, x')$ exist for any $(t, \mu, x, x') \in [0, T] \times \Pc_2(\R^d) \times \R^d \times \R^d$;
\item $\partial_\mu V(t, \mu)(x)$, $\partial_x\partial_\mu V(t, \mu)(x)$ and $\partial_\mu^2 V(t, \mu)(x, x')$  are Lipschitz continuous with respect to all entries and satisfy that, for any $(t, \mu, x, x') \in [0, T] \times \Pc_2(\R^d) \times \R^d \times \R^d$,
    \begin{align*}
    |\partial_\mu V(t, \mu)(x)| \leq C(1 + |x| + \|\mu\|_2), \; |\partial_x\partial_\mu V(t, \mu)(x)| + |\partial_\mu^2 V(t, \mu)(x, x')| \leq C,
    \end{align*}
    for some constant $C > 0$.
\end{itemize}
\end{Definition}

\begin{Assumption}\label{ass:approximation}
$b, \sigma$, and $\sigma_0$ are sufficiently regular such that for $f = \hat g, \hat r_{{\bm \pi}}$ or $\Ec( \cdot,{\bm \pi})$, the PDE $\frac{\partial V}{\partial t}(t, \mu) + \Tc^{{\bm \pi}}V(t, \mu)= 0$, $t \in [0, t']$, with the terminal condition $V(t', \mu) = f(t', \mu)$ has a classical solution $V_f \in C^{1, 2}([0, t'] \times \Pc_2(\R^d))$ satisfying
    \begin{align*}
    & |\frac{\partial V_f}{\partial t}(t, \mu) - \frac{\partial V_f}{\partial t}(t, \mu')| + |\partial_\mu V_f(t, \mu)(x) - \partial_\mu V_f(t, \mu')(x')|\\
    & \;\;\; + |\partial_x\partial_\mu V_f(t, \mu)(x) - \partial_x\partial_\mu V_f(t, \mu')(x')| \leq C_\Pi\Big(|x - x'| + \Wc_2(\mu, \mu')\Big),\\
    &|\partial_\mu^2 V_f(t, \mu)(x, y) - \partial_\mu^2 V_f(t, \mu')(x', y')| \leq C_{\Pi}\Big(|x-x'| + |y-y'| + \Wc_2(\mu, \mu')\Big),
    \end{align*}
where the operator $\Tc^{\bm \pi}$ is defined by
    \begin{align*}
    \Tc^{\bm \pi}V(t, \mu) =& \int_{\R^d} \Big(b_{\bm \pi}(t, x, \mu)\trans \partial_\mu V(t, \mu)(x) + \frac{1}{2} {\rm Tr}\big(\sigma_{\bm \pi}\sigma_{\bm \pi}\trans + \sigma_{o, \bm \pi} \sigma_{o, {\bm \pi}}\trans\big)(t, x, \mu) \partial_x\partial_\mu V(t, \mu)(x)\Big)\mu(dx) \nonumber\\
&+ \frac{1}{2}\int_{\R^d \times \R^d} {\rm Tr}\Big(\sigma_{o, {\bm \pi}}(t, x, \mu)\sigma_{o, {\bm \pi}}(t, x', \mu) \trans \partial_\mu^2 V(t, \mu)(x, x')\Big)\mu(dx) \otimes \mu(dx').
    \end{align*}
\end{Assumption}

\begin{Proposition}\label{prop:approximation}Under Assumptions \ref{ass:b-sigma-sigmao} and \ref{ass:approximation}, we have $|J^{\Dc}(t, \mu; {\bm \pi}) - \tilde J(t, \mu; {\bm \pi})| \leq C |\Dc|^{1/2}$,
where the constant $C$ depends on $b, \sigma, \sigma_o, r$, and $\Pi$. Consequently, $J(t, \mu; {\bm \pi}) = \tilde J(t, \mu; {\bm \pi})$.
\end{Proposition}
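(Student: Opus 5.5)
We compare the two formulations one term at a time through the backward Kolmogorov equation on the Wasserstein space given in Assumption \ref{ass:approximation}. Write $\tilde\mu_s:=\mu_s^{\bm\pi}$ for the conditional law in the relaxed formulation \eqref{equ:common-noise-average} and $\mu_s^{\Dc}:=\mu_s^{\Dc,\bm\pi}$ for the one in the sampled formulation \eqref{equ:exploratory_SDE}.

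\textbf{Step 1: duality for a fixed term.} Take a term $f\in\{\hat g,\hat r_{\bm\pi},\Ec(\cdot,\bm\pi)\}$ evaluated at a time $t'\in[t,T]$, and let $V_f$ be the classical solution on $[t,t']$ of $\partial_tV+\Tc^{\bm\pi}V=0$ with $V_f(t',\cdot)=f(t',\cdot)$. By It\^o's formula along the flow of conditional measures, applied to the relaxed dynamics, $V_f(s,\tilde\mu_s)$ is a $\G$-martingale up to a stochastic integral in $B$ with zero mean. The reason is that the relaxed generator coincides with $\Tc^{\bm\pi}$. Indeed, the idiosyncratic diffusion in \eqref{equ:common-noise-average} has squared coefficient $\sigma_{\bm\pi}\sigma_{\bm\pi}\trans+{\rm cov}_{\bm\pi}(\sigma)+\sigma_{o,\bm\pi}\sigma_{o,\bm\pi}\trans+{\rm cov}_{\bm\pi}(\sigma_o)$, since $\bar B$ acts as idiosyncratic noise after conditioning on $\Gc$. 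The common-noise term $\partial^2_\mu V$ is driven only by $\sigma_{o,\bm\pi}$. I will check that the operator $\Tc^{\bm\pi}$ in Assumption \ref{ass:approximation} is intended with the full averages $\int\sigma\sigma\trans\,d\bm\pi$, as the appendix derivation suggests. This gives
\[
\E^e[f(t',\tilde\mu_{t'})]=V_f(t,\mu).
\]

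\textbf{Step 2: the sampled flow.} Apply the same It\^o formula to $V_f(s,\mu^{\Dc}_s)$. Conditional on $\Gc_s$, the agent's action $a_{\delta(s)}$ is a deterministic function of $X_{\delta(s)}$, $\mu_{\delta(s)}$ and $U_{\delta(s)}$, where $U$ is independent of $(W,B)$. Hence the drift equals
\[
\partial_tV_f+\int\!\!\int \Big(b\,\partial_\mu V_f+\tfrac12{\rm Tr}\big((\sigma\sigma\trans+\sigma_o\sigma_o\trans)\partial_x\partial_\mu V_f\big)\Big)(s,x,\mu_s,a)\,\nu_s(dx,da)+\tfrac12\iint \sigma_o(s,x,\mu_s,a)\sigma_o(s,x',\mu_s,a')\trans\partial^2_\mu V_f\,\nu_s\otimes\nu_s,
\]
where $\nu_s$ is the conditional joint law of $(X_s,a_{\delta(s)})$. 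The $\partial^2_\mu$ term factorizes because two independent copies (over $\Omega^1\times\Omega^2$) share $B$ but have independent $U$'s. Therefore, if $(X_s,\mu_s)$ is replaced by $(X_{\delta(s)},\mu_{\delta(s)})$ in the integrands, each action integral becomes a $\bm\pi$-average, and the drift reduces to $(\partial_t+\Tc^{\bm\pi})V_f(\delta(s),\mu_{\delta(s)})$ plus errors. The relevant expansions are $a$-averages evaluated at $x$ versus at $x$ frozen at $\delta(s)$.

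\textbf{Step 3: error estimate (main obstacle).} The error is a sum of Lipschitz differences of coefficients and of derivatives of $V_f$ between times $s$ and $\delta(s)$. Assumption \ref{ass:approximation}, the Lipschitz property of $b,\sigma,\sigma_o$, and the growth bounds control it by
\[
C\,\E\big[(1+|X_s|^2+|a|^2+\|\mu_s\|_2^2)(|s-\delta(s)|+|X_s-X_{\delta(s)}|+\Wc_2(\mu_s,\mu_{\delta(s)}))\big].
\]
The moment condition (iii) with $2+\delta$ moments on actions, together with standard SDE estimates such as $\E|X_s-X_{\delta(s)}|^2\le C|\Dc|$ and $\E\,\Wc_2^2\le C|\Dc|$, then yield $C|\Dc|^{1/2}$ via H\"older. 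The delicate point is the quadratic growth against merely $2+\delta$ moments, so I will need to carry out this H\"older split carefully. Since $(\partial_t+\Tc^{\bm\pi})V_f=0$, integrating over $s$ gives
\[
\big|\E^e f(t',\mu^{\Dc}_{t'})-V_f(t,\mu)\big|\le C|\Dc|^{1/2},
\]
uniformly in $t'$.

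\textbf{Step 4: assemble.} For the running terms in $J^{\Dc}$, first replace $r(s,X_s,\mu_s,a_{\delta(s)})$ and $E_{\bm\pi}(\delta(s),\cdot)$ by their values at $\delta(s)$, paying a Lipschitz cost of $C|\Dc|^{1/2}$. The tower property over $U$ then turns these into $\hat r_{\bm\pi}(\delta(s),\mu_{\delta(s)})$ and $\Ec(\delta(s),\mu_{\delta(s)},\bm\pi)$. Move back to time $s$ at the same cost, apply Step 3 with $t'=s$, and integrate $e^{-\beta(s-t)}ds$; the terminal term is handled the same way. This proves $|J^{\Dc}-\tilde J|\le C|\Dc|^{1/2}$, and letting $|\Dc|\to0$ gives $J=\tilde J$.
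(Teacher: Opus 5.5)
Your route is the paper's own. You use Feynman--Kac for the relaxed flow through the backward equation of Assumption~\ref{ass:approximation}. You apply It\^o's formula to $V_f(s,\mu^{\Dc}_s)$ on each grid interval and subtract $(\partial_t+\Tc^{\bm\pi})V_f=0$ at the left grid point, where averaging over the freshly sampled, conditionally independent actions gives exactly $\Tc^{\bm\pi}$, including the factorised $\partial^2_\mu$ term. You then control the increments by Lipschitz continuity and split $J^{\Dc}-\tilde J$ into terminal, reward and entropy parts via the tower property. The only difference is that you move the sampled flow back to time $s$, where the paper moves the relaxed flow to $s_i$, and this is immaterial. Your reading of $\Tc^{\bm\pi}$ with the full averages $\int(\sigma\sigma\trans+\sigma_o\sigma_o\trans)\,d\bm\pi$ is the right one: the extra noises $\overline W,\bar B$ in \eqref{equ:common-noise-average} force it, and the paper's computation of $e_j$ implicitly uses it.

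The step you deferred in Step 3 is, however, a real gap. The paper's proof has the same one, since it obtains $|e_j|\le C(s_{j+1}-s_j)|\Dc|^{1/2}$ from Lipschitz continuity alone.

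\begin{itemize}
\item The error contains $\tfrac12{\rm Tr}\big(\sigma\sigma\trans(\delta(s),X_{\delta(s)},\mu_{\delta(s)},a)\,[\partial_x\partial_\mu V_f(s,\mu_s)(X_s)-\partial_x\partial_\mu V_f(\delta(s),\mu_{\delta(s)})(X_{\delta(s)})]\big)$ with $a=a_{\delta(s)}$. This term is of size $|a|^2\min\{1,|X_s-X_{\delta(s)}|+\cdots\}$.
\item Conditionally on the sampled action, $|X_s-X_{\delta(s)}|$ is of order $(1+|a|)|\Dc|^{1/2}$, because $\sigma$ is evaluated at that same $a$. So you need $\E|a|^3$.
\item H\"older cannot replace this third moment. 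Pairing $|a|^2\in L^{(2+\delta)/2}$ requires $X_s-X_{\delta(s)}\in L^{(2+\delta)/\delta}$, hence more than $2+\delta$ moments of $a$ when $\delta<1$.
\item Truncating at $|a|\approx|\Dc|^{-1/2}$ and using the boundedness of $\partial_x\partial_\mu V_f$ yields only $C|\Dc|^{\min(\delta,1)/2}$.
\end{itemize}

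This cannot be improved. Take $d=1$, $b=\sigma_o=r=0$, $\sigma=a$, $g=\cos x$, $\mu$ the Dirac mass at $0$, a uniform grid of mesh $h$, and $\bm\pi$ a fixed symmetric density with tails $|a|^{-(3+\delta')}$, where $\delta<\delta'<1$. All the assumptions hold; here $V_{\hat g}(s,\mu)=e^{-m_2(T-s)/2}\int\cos x\,\mu(dx)$ with $m_2=\int a^2\bm\pi(a)da$. The entropy terms coincide, and
\[
J^{\Dc}-\tilde J=e^{-\beta(T-t)}\Big[\big(\E e^{-a^2h/2}\big)^{(T-t)/h}-e^{-m_2(T-t)/2}\Big]\ge c\,h^{\delta'/2},
\]
because $\E\big[e^{-a^2h/2}-1+a^2h/2\big]\ge \tfrac h4\,\E\big[a^2\mathbf{1}_{\{|a|\ge 2h^{-1/2}\}}\big]\ge c\,h^{1+\delta'/2}$.

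Hence the rate $|\Dc|^{1/2}$ needs a third moment of the policy, i.e.\ $\delta\ge1$ in Definition~\ref{def:pi}(iii). What your argument, like the paper's, actually proves is $|J^{\Dc}-\tilde J|\le C|\Dc|^{\min(\delta,1)/2}$. That still gives the conclusion $J=\tilde J$.
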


The proof of Proposition \ref{prop:approximation} relies on the following lemma.
\begin{Lemma}\label{lem:approximation}
Let Assumptions \ref{ass:b-sigma-sigmao}, and \ref{ass:approximation} hold. Then for $f$ in Assumption \ref{ass:approximation}, there exists a constant $C$ (depending only on $T$, $t$, $\gamma$, $b$, $\sigma$, $\sigma_0$, $\Pi$ and $f$) such that for all grids $\Dc$,
\begin{align}
\sup_{s \in [t, T]} \big|\E^e[f(\mu_s^{\Dc, {\bm \pi}}) - f(\mu_s^{\bm \pi})]\big| \leq C |\Dc|^{1/2}.
\end{align}
\end{Lemma}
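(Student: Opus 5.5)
Fix $s\in(t,T]$ and compare the two marginals through the backward (Kolmogorov) equation on the Wasserstein space, in the spirit of the weak-error analysis for Euler schemes. Let $V:=V_f$ be the classical solution from Assumption \ref{ass:approximation} with $t'=s$, so that $\partial_t V+\Tc^{\bm \pi}V=0$ on $[t,s]$ and $V(s,\cdot)=f(s,\cdot)$.

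\textbf{Step 1: both marginals in terms of $V$.} By It\^o's formula along the flow of conditional measures applied to $V(u,\mu_u^{\bm \pi})$, the dynamics \eqref{equ:common-noise-average} generate exactly the operator $\Tc^{\bm \pi}$. Indeed, the extra Brownian motions $\overline W,\bar B$ contribute the variances ${\rm cov}_{\bm \pi}(\sigma)$ and ${\rm cov}_{\bm \pi}(\sigma_o)$, which complete $\sigma_{\bm \pi}\sigma_{\bm \pi}\trans$ and $\sigma_{o,\bm \pi}\sigma_{o,\bm \pi}\trans$ to $\int\sigma\sigma\trans d{\bm \pi}$ in the diagonal term. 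Only $\sigma_{o,\bm \pi}$ enters the cross term $\partial_\mu^2 V$, because $\bar B$ is idiosyncratic with respect to $\Gc$. The time-$s$ value is therefore $\E^e[f(\mu_s^{\bm \pi})]=V(t,\mu)$.

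Apply the same formula to $V(u,\mu_u^{\Dc,\bm \pi})$. Conditioning on $\Gc_u$ (and on the grid data), the drift of this process equals
\[
\partial_t V+\int\!\!\int\Big(b(u,x,\mu,a)\trans\partial_\mu V(u,\mu)(x)+\tfrac12{\rm Tr}\big((\sigma\sigma\trans+\sigma_o\sigma_o\trans)(u,x,\mu,a)\,\partial_x\partial_\mu V(u,\mu)(x)\big)\Big)\,\rho_u(dx,da)
\]
plus a cross term in $\sigma_o(u,x,\mu,a)\sigma_o(u,x',\mu,a')$ integrated against the conditional joint law of two independent copies. Here $\mu=\mu_u^{\Dc,\bm \pi}$, and $\rho_u$ is the conditional law of $(X_u^{\Dc,\bm \pi},a^{\bm \pi}_{\delta(u)})$.

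Since the common noise is shared, the two copies carry independent $U_i$ samples. This is the point where the cross term produces only $\sigma_{o,\bm \pi}\otimes\sigma_{o,\bm \pi}$ in the limit, matching the form of $\Tc^{\bm \pi}$.

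\textbf{Step 2: error decomposition.} Subtracting $\partial_tV+\Tc^{\bm\pi}V=0$ gives
\[
\E^e[f(\mu_s^{\Dc,\bm \pi})]-\E^e[f(\mu_s^{\bm \pi})]=\E^e\int_t^s \big(\mathcal R_u\big)\,du,
\]
where $\mathcal R_u$ is the difference between the integrand evaluated with the frozen action $a_{\delta(u)}$ and with $a\sim{\bm \pi}(\cdot|u,x,\mu)$. Write each coefficient as $F(u,x,\mu,a)\,G(u,\mu,x)$, with $F\in\{b,\sigma\sigma\trans,\ldots\}$ and $G$ a derivative of $V$.

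The key observation is that, conditional on $\Fc_{\delta(u)}$, the action $a_{\delta(u)}$ has law ${\bm \pi}(\cdot|\delta(u),X_{\delta(u)},\mu_{\delta(u)})$. Freezing every argument except $a$ at time $\delta(u)$ therefore makes the difference vanish in expectation. The remainder is then controlled by the following increments:
\begin{enumerate}[label=(\roman*)]
\item $|F(u,X_u,\mu_u,a)-F(\delta(u),X_{\delta(u)},\mu_{\delta(u)},a)|$, bounded by the Lipschitz property in Assumption \ref{ass:b-sigma-sigmao}(ii);
\item the analogous increments of the derivatives of $V$, bounded by the Lipschitz bounds in Assumption \ref{ass:approximation};
\item the policy increment $\int|{\bm \pi}(a|u,x,\mu)-{\bm \pi}(a|\delta(u),\ldots)|da$, bounded by Definition \ref{def:pi}(ii). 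Time regularity of ${\bm \pi}$ is not stated in Definition \ref{def:pi}. Either I assume it, or I avoid it by using the time-$\delta(u)$ policy already inside $\Tc$ through $V$'s equation, whose time derivative is Lipschitz.
\end{enumerate}
Each of these increments is of order $|X_u-X_{\delta(u)}|+\Wc_2(\mu_u,\mu_{\delta(u)})+|u-\delta(u)|$, multiplied by growth factors of the form $(1+|x|+|a|^2+\|\mu\|_2)$.

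\textbf{Step 3: moment and H\"older estimates.} Standard estimates give $\E^e\sup_u|X_u^{\Dc,\bm \pi}|^2\le C$, using Definition \ref{def:pi}(iii) for the action moments, and $\E^e|X_u-X_{\delta(u)}|^2\le C|\Dc|$. Since $\Wc_2(\mu_u,\mu_{\delta(u)})^2\le \E[|X_u-X_{\delta(u)}|^2\,|\,\Gc_u]$, the Wasserstein increment is of the same order. Cauchy--Schwarz, with the $(2+\delta)$-moment of $a$ absorbing the products of $|a|^2$ with the increments, then yields $|\E^e\mathcal R_u|\le C|\Dc|^{1/2}$ uniformly in $u$. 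Integrating over $u$ and taking the supremum over $s$ gives the claim.

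The main obstacle is the growth factor: the quadratic growth in $a$ of $\sigma\sigma\trans$, multiplied by an increment, needs more than $2$ moments of $a$. I would first try Hölder's inequality with exponents $(1+\delta/2)$ and its conjugate, together with the Lipschitz (rather than growth) bound, so that only $|a|\cdot$increment appears. I would also check carefully that the conditional-independence structure of the $U_i$ across the two copies reproduces precisely the cross term $\sigma_{o,\bm \pi}\sigma_{o,\bm \pi}\trans$.
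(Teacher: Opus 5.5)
Your proposal follows the paper's proof essentially step for step. It uses Feynman--Kac for $V_f$ and It\^o's formula for $V_f(u,\mu_u^{\Dc,{\bm \pi}})$. It then subtracts $\partial_t V_f+\Tc^{{\bm \pi}}V_f=0$ evaluated at the left grid point $(s_j,\mu^{\Dc,{\bm \pi}}_{s_j})$, which is exactly your fallback in (iii) and should be your primary route: it needs no time regularity of ${\bm \pi}$, and it avoids integrating coefficients that are unbounded in $a$ against the $L^1$ bound of Definition~\ref{def:pi}(ii). The rest is the tower property for the sampled action plus Lipschitz bounds combined with $\E^e|X_u-X_{\delta(u)}|^2\le C|\Dc|$.

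The growth obstacle you flag is $(\sigma\sigma\trans+\sigma_o\sigma_o\trans)(\cdot,a_{\delta(u)})$ multiplied by the increment of $\partial_x\partial_\mu V_f$; the paper does not treat it either and simply asserts $|e_j|\le C(s_{j+1}-s_j)|\Dc|^{1/2}$. Your H\"older split, using boundedness of that increment, only yields $|\Dc|^{\delta/(2+\delta)}$ for this term, so along this route the full rate needs more integrability of ${\bm \pi}$ than Definition~\ref{def:pi}(iii) provides (the convergence $J=\tilde J$ is unaffected).
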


\begin{proof}[Proof of Lemma \ref{lem:approximation}]
Without loss of generality, let us assume that $t' = s_i$. As ${\bm \pi}$ is fixed throughout the proof, we do not write the superscript $\bm \pi$ in $X_s^{\Dc, \bm \pi}$, $\mu_s^{\Dc, \bm \pi}$ and $\mu_s^{\bm \pi}$.  In view that $V_f$ is the solution of $\frac{\partial V}{\partial t}(t, \mu)+ \Tc^{{\bm \pi}} V(t, \mu) = 0$, $t \in [0, s_i]$, $V(s_i, \mu) = f(\mu)$, we have $\E^e[f(\mu_{s_i})] = V_f(t, \mu)$ by the Feynman-Kac's formula. It follows that
\begin{align*}
&\E^e[f(\mu_{s_i}^{\Dc}) - f(\mu_{s_i})] = \E^e[V_f(s_i, \mu_{s_i}^{\Dc}) - V_f(t, \mu)]
= \sum_{j=0}^{i-1} \E^e[V_f(s_{j+1}, \mu_{s_{j+1}}^{\Dc}) - V_f(s_j, \mu_{s_j}^{\Dc})]= \sum_{j=0}^{i-1} e_j.
\end{align*}
It thus suffices to estimate the term $e_j$.  Applying It\^o's lemma to $V_f(s, \mu_s^{\Dc})$ between $s_j$ and $s_{j+1}$ and taking the expectation on both sides, we get that
\begin{align*}
e_j= & \E^e\Big[\int_{s_j}^{s_{j+1}} \Big(\frac{\partial  V_f}{\partial t}(s, \mu_s^{\Dc}) + b(s, X_s^{\Dc}, \mu_s^{\Dc}, a_{s_j})\trans\partial_\mu V_f(s, \mu_s^{\Dc})(X_s^{\Dc})\\
& + \frac{1}{2} {\rm Tr}\big(\sigma\sigma + \sigma_o\sigma_o\trans\big)(s, X_s^{\Dc}, \mu_s^{\Dc}, a_{s_j}) \partial_x\partial_\mu V_f(s, \mu_s^{\Dc})(X_s^{\Dc}))\Big)ds\Big]\\
& + \frac{1}{2}\E^e \bar \E^e \Big[\int_{s_j}^{s_{j+1}} {\rm Tr}\big(\sigma_0(s, X_s^{\Dc}, \mu_s^{\Dc}, a_{s_j}) \sigma_0\trans(s, \bar X_s^{\Dc}, \mu_s^{\Dc}, \bar a_{s_j})\partial_\mu^2 V_f(s, \mu_s^{\Dc})(X_s^{\Dc}, \bar X_s^{\Dc})\big) ds\Big].
\end{align*}
On the other hand, $\big(\frac{\partial V_f}{\partial t}(s_j, \mu_{s_j}^{\Dc}) + \Tc^{\bm \pi} V_f(s_j, \mu_{s_j}^{\Dc})\big) \cdot (s_{j+1} - s_j) = 0$, for $0 \leq j \leq i-1$. Subtracting this term on both sides of the above equation, we get that
\begin{align*}
e_j = & \E^e\Big[\int_{s_j}^{s_{j+1}} \Big(\frac{\partial V_f}{\partial t}(s, \mu_s^{\Dc}) -  \frac{\partial V_f}{\partial t}(s_j, \mu_{s_j}^\Dc) + b(s, X_s^{\Dc}, \mu_s^{\Dc}, a_{s_j})\trans\partial_\mu V_f(s, \mu_s^{\Dc})(X_s^{\Dc})\\
& -  b(s_j, X_{s_j}^{\Dc}, \mu_{s_j}^{\Dc}, a_{s_j})\trans\partial_\mu V_f(s_j, \mu_{s_j}^{\Dc})(X_{s_j}^{\Dc})  + \frac{1}{2}{\rm Tr}\big(\sigma\sigma + \sigma_o\sigma_o\trans\big)(s, X_s^{\Dc}, \mu_s^{\Dc}, a_{s_j}) \partial_x\partial_\mu V_f(s, \mu_s^{\Dc})(X_s^{\Dc})\\
& - \frac{1}{2}{\rm Tr}\big(\sigma\sigma + \sigma_o\sigma_o\trans\big)(s_j, X_{s_j}^{\Dc}, \mu_{s_j}^{\Dc}, a_{s_j}) \partial_x\partial_\mu V_f(s_j, \mu_{s_j}^{\Dc})(X_{s_j}^{\Dc})\Big)ds\Big]\\
& + \frac{1}{2}\E^e \bar \E^e \Big[\int_{s_j}^{s_{j+1}} \Big({\rm Tr}\big(\sigma_o(s, X_s^{\Dc}, \mu_{s}^\Dc, a_{s_j}) \sigma_o\trans(s, \bar X_s^{\Dc}, \mu_{s}^\Dc, \bar a_{s_j})\partial_\mu^2 V_f(s, \mu_{s}^\Dc)(X_s^{\Dc}, \bar X_s^{\Dc})\big)\\
& - {\rm Tr}\big(\sigma_o(s_j, X_{s_j}^{\Dc}, \mu_{s_j}^\Dc, a_{s_j})\sigma_o\trans(s_j, \bar X_{s_j}^{\Dc}, \mu_{s_j}^\Dc, \bar a_{s_j})\partial_\mu^2 V_f(s_j, \mu_{s_j}^\Dc)(X_{s_j}^{\Dc}, \bar X_{s_j}^{\Dc})\big)\Big) ds\Big].
\end{align*}
Thanks to the continuity of $X_s^{\Dc}$,
we have $\E^e\left[\Wc_2^2(\mu_s^{\Dc}, \mu_{s'}^{\Dc})\right] \leq  \E^e\left[|X_s^{\Dc} - X_{s'}^{\Dc}|^2\right] \leq C|s -s'|$.
Combining it with the Lipschitz continuity on $b$, $\sigma$, $\sigma_0$, $\frac{\partial V_f}{\partial t}$, $\partial_\mu V_f$, $\partial_x\partial_\mu V_f$ and $\partial_\mu^2 V_f$, we conclude that $|e_j| \leq C (s_{j+1} - s_j) |\Dc|^{1/2}$.
We then get the desired result that $\sum_{j=0}^{i-1} |e_j| \leq C(T-t) |\Dc|^{1/2}$.
\end{proof}

We next proceed the proof of Proposition \ref{prop:approximation}.
\begin{proof}[Proof of Proposition \ref{prop:approximation}] Note that
\begin{align*}
&J^{\Dc} (t, \xi; {\bm \pi}) - \tilde J(t, \mu; {\bm \pi}) \\
& = \E^e\left[e^{-\beta(T -t)}\big(\hat g(\mu_T^{\Dc}) - \hat g(\mu_T)\big)\right] + \E^e\left[\sum_{i=0}^{n-1} \int_{s_i}^{s_{i+1}} e^{-\beta (s -t )}\Big(r(s, X_s^{\Dc}, \mu_s^{\Dc}, a_{s_i}) - \hat r_{{\bm \pi}}(s, \mu_s)\Big)ds\right]\\
& \;+ \gamma \sum_{i=0}^{n-1} \E^e\left[\int_{s_i}^{s_{i+1}} e^{-\beta (s -t )}\Big(\mathcal{E}(s_i, \mu_{s_i}^{\Dc}, {\bm \pi}) - \mathcal{E}(s, \mu_s, {\bm \pi})\Big)ds\right]=: I + \sum_{i=0}^{n-1} II^i + \gamma \sum_{i=0}^{n-1} III^i.
\end{align*}
By Lemma \ref{lem:approximation} with $f =\hat g$,  we have that
\begin{align}\label{estimateI}
|I| \leq C |\Dc|^{1/2}.
\end{align}
For the term $\sum_{i=0}^{n-1} II_i$, it holds that
\begin{align}
II^i =& \E^e\left[\int_{s_i}^{s_{i+1}} e^{-\beta (s -t )}\Big(r(s, X_s^{\Dc}, \mu_s^{\Dc}, a_{s_i}) - r(s_i, X_{s_i}^{\Dc}, \mu_{s_i}^{\Dc}, a_{s_i})\Big)ds\right] \nonumber\\
& + \E^e\left[\int_{s_i}^{s_{i+1}} e^{-\beta (s -t )}\Big(\hat r_{{\bm \pi}}(s_i, \mu_{s_i}^{\Dc}) - \hat r_{{\bm \pi}}(s_i, \mu_s)\Big)ds\right]\\
& + \E^e \left[\int_{s_i}^{s_{i+1}}e^{-\beta (s -t )} \Big(\hat r_{{\bm \pi}}(s_i, \mu_{s_i})- \hat r_{{\bm \pi}}(s, \mu_s)\Big)ds\right] = : II_1^i + II_2^i + II_3^i,\nonumber
\end{align}
where in the term $II_2^i$, we have $\E^e[r(s_i, X_{s_i}^{\Dc}, \mu_{s_i}^{\Dc}, a_{s_i})] = \E^e[\hat r_{{\bm \pi}}(s_i, \mu_{s_i}^{\Dc})]$ by the tower property. For the term $II_1^i$, we obtain by Assumption \ref{ass:b-sigma-sigmao} (iii) that
\begin{align}\label{esimateII1i}
|II_1^i| \leq & \E^e\biggl[\int_{s_i}^{s_{i+1}} e^{-\beta (s -t)} \Big(|X_s^{\Dc}| + |X_{s_i}^{\Dc}| + \|\mu_s^{\Dc}\|_2 + \|\mu_{s_i}^{\Dc}\|_2\Big) \Big(|s- s_i|\\
& + |X_s^{\Dc}- X_{s_i}^{\Dc}| + \Wc_2(\mu_s^{\Dc},  \mu_{s_i}^{\Dc})\Big)ds\biggl] \leq C(s_{i+1} - s_i) \cdot |\Dc|^{1/2}. \nonumber
\end{align}
Similarly, by Lipschitz continuity of $\mu_s$ in time and local Lipschitz continuity on $\hat r_{\bm \pi}$, it holds that
\begin{align}\label{estimateII3i}
|II_3^i| \leq C (s_{i+1} - s_i) \cdot |\Dc|^{1/2}.
\end{align}
It follows from Lemma \ref{lem:approximation} with $f = \hat r_{{\bm \pi}}$ that
\begin{align}\label{estimateII2i}
|II_2^i| \leq C (s_{i+1} - s_i) \cdot |\Dc|^{1/2}.
\end{align}
Finally, for the term $III^i$, we can deduce that
\begin{align}\label{estimateIIIi}
|III^i| \leq& \E^e\left[\int_{s_i}^{s_{i+1}} e^{-\beta (s -t )}\Big(\mathcal{E}(s_i, \mu_{s_i}^{\Dc}, {\bm \pi}) - \mathcal{E}(s_i, \mu_{s_i},{\bm \pi}) + \mathcal{E}(s_i, \mu_{s_i},{\bm \pi}) - \mathcal{E}(s, \mu_s, {\bm \pi})\Big)ds\right] \nonumber\\
& \leq C (s_{i+1} - s_i) \cdot |\Dc|^{1/2},
\end{align}
where in the last inequality, we have used Lemma \ref{lem:approximation}, the local Lipschitz continuity of $\Ec(\cdot, {\bm \pi})$ and the continuity of $\mu_s$ in time $s$. Combining \eqref{estimateI}, \eqref{esimateII1i}, \eqref{estimateII3i}, \eqref{estimateII2i}, and \eqref{estimateIIIi}, we conclude the result.
\end{proof}

The equivalence between value functions in Proposition \ref{prop:approximation} allows us to use the relaxed control formulation for the derivation of HJB equation and some theoretical analysis afterwards.

\subsection{First-order condition and policy improvement}


We first give a PDE characterization of $\tilde J(\cdot, \cdot; {\bm \pi})$ in \eqref{equ:exploratory_average_value_function} based on the Feynman-Kac formula. It is assumed that $\tilde J(\cdot, \cdot; {\bm \pi}) \in \Cc^{1, 2}([0, T] \times \Pc_2(\R^d))$ in Lemma \ref{lemma:regularityJ} to avoid heavy technicalities. Interested readers can generalize Malliavin calculus arguments in \cite{BCL2021, CCD2022,crisanMcMurray2018} to the common noise setting to investigate sufficient conditions for the regularity of $\tilde J(\cdot, \cdot; {\bm \pi})$.

\begin{Lemma}\label{lemma:regularityJ} Assume that the function $\tilde J(\cdot, \cdot; {\bm \pi})$ belongs to $\Cc^{1, 2}([0, T] \times \Pc_2(\R^d))$. Then it satisfies the following PDE
\begin{align}
&\frac{\partial \tilde J}{\partial t}(t, \mu; {\bm \pi})  +
\int_{\R^d \times \Ac}H\big(t, x, \mu, a,\partial_\mu \tilde J(t, \mu; {\bm \pi})(x), \partial_x\partial_\mu \tilde J(t, \mu; {\bm \pi})(x)\big){\bm \pi}(a|t, x, \mu)da\mu(dx) \nonumber\\
&+ \frac{1}{2}\int_{\R^d \times \R^d} {\rm Tr}\Big(\sigma_{o, {\bm \pi}}(t, x, \mu)\sigma_{o, {\bm \pi}}(t, x', \mu) \trans \partial_\mu^2 \tilde J(t, \mu; {\bm \pi})(x, x')\Big)\mu(dx) \otimes \mu(dx') \label{equ:dynamic-programming-equation}\\
& - \beta \tilde J(t, \mu; {\bm \pi}) + \gamma \mathcal{E}(t, \mu, {\bm \pi})=0, \nonumber
\end{align}
where the Hamiltonian operator $H$ is defined by
\begin{align}\label{operatorH}
H(t, x, \mu, a, p, q) & :=  b(t, x, \mu, a) \trans p + \frac{1}{2}{\rm Tr}\Big(\big(\sigma\sigma\trans + \sigma_o\sigma_o\trans\big)(t, x, \mu, a)
q\Big) + r(t, x, \mu, a).
\end{align}
\end{Lemma}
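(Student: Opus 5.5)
We derive the PDE from the flow property of $\tilde J$ together with It\^o's formula along the flow of conditional measures $s\mapsto \mu_s^{\bm \pi}=\Lc(X_s^{\bm \pi}|\Gc_s)$.

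\textbf{Step 1: flow property.} The dynamics \eqref{equ:common-noise-average} are Markovian in the pair $(s,\mu_s^{\bm \pi})$, because the coefficients depend only on $(s,x,\mu)$ through the fixed feedback policy ${\bm \pi}$. By pathwise uniqueness of the conditional McKean--Vlasov SDE and the independence of increments of $(W,\overline W,B,\bar B)$, we obtain for $t\le t+h\le T$
\begin{align*}
\tilde J(t,\mu;{\bm \pi})=\E^e\Big[\int_t^{t+h} e^{-\beta(s-t)}\big(\hat r_{\bm \pi}(s,\mu_s^{\bm \pi})+\gamma\mathcal E(s,\mu_s^{\bm \pi},{\bm \pi})\big)ds+e^{-\beta h}\tilde J(t+h,\mu_{t+h}^{\bm \pi};{\bm \pi})\Big].
\end{align*}
Here $\mu^{\bm \pi}$ starts from $\mu$ at time $t$. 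This is the standard dynamic programming (tower) argument conditioned on $\Gc_{t+h}$, using the fact that $\tilde J$ depends on the initial condition only through its law.

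\textbf{Step 2: It\^o's formula.} We apply It\^o's formula along the flow of conditional measures (Carmona--Delarue, vol.~II) to $e^{-\beta(s-t)}\tilde J(s,\mu_s^{\bm \pi};{\bm \pi})$, which is justified since $\tilde J\in\Cc^{1,2}$. The main computation is identifying the quadratic-variation contributions for $d=1$:
\begin{itemize}
\item The idiosyncratic diffusion of $X^{\bm \pi}$ is $\sigma_{\bm \pi}^2+{\rm std}_{\bm \pi}(\sigma)^2=\int\sigma^2\,d{\bm \pi}$, since $\overline W$ is independent of $B$.
\item The diffusion driven by $(B,\bar B)$ has total variance $\sigma_{o,{\bm \pi}}^2+{\rm std}_{\bm \pi}(\sigma_o)^2=\int\sigma_o^2\,d{\bm \pi}$. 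Together with the previous item, this yields the term $\tfrac12{\rm Tr}((\sigma\sigma\trans+\sigma_o\sigma_o\trans)\partial_x\partial_\mu\tilde J)$ averaged against ${\bm \pi}$, which is exactly the diffusion part of $H$.
\item For the $\partial_\mu^2$ term, only the noise that is common to $X$ and an independent copy $\bar X$ given $\Gc$ contributes. One has to check which of $B,\bar B$ is measurable with respect to the conditioning filtration $\Gc$. The intended construction (Appendix B) places $\bar B$ as a noise that is \emph{not} common, so only $\sigma_{o,{\bm \pi}}(x)\sigma_{o,{\bm \pi}}(x')$ appears, giving the stated cross term.
\end{itemize}
The drift term $b_{\bm \pi}\,\partial_\mu\tilde J$ and the reward $\hat r_{\bm \pi}$ then combine into $\int H\,{\bm \pi}(da)\,\mu(dx)$.

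\textbf{Step 3: martingale terms vanish.} The growth bounds in the definition of $\Cc^{1,2}$, Assumption~\ref{ass:b-sigma-sigmao}, and the moment condition (iii) in Definition~\ref{def:pi} give $\E^e\sup_s|X_s^{\bm \pi}|^2<\infty$. Hence the stochastic integrals are true martingales and their expectations vanish.

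\textbf{Step 4: conclusion.} Substituting into Step 1, dividing by $h$, and letting $h\downarrow0$ gives \eqref{equ:dynamic-programming-equation}. The passage to the limit uses the joint continuity of $\partial_t\tilde J$, $\partial_\mu\tilde J$, $\partial_x\partial_\mu\tilde J$, $\partial_\mu^2\tilde J$, continuity of the coefficients and of $E_{\bm \pi}$, and $\E^e[\Wc_2^2(\mu_s^{\bm \pi},\mu)]\le C(s-t)$, combined with dominated convergence.

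The main obstacle is Step 2. There we must correctly identify the conditional-law dynamics when the extra Brownian motions $\overline W,\bar B$ are present, and confirm that $\bar B$ enters the conditional-law dynamics as idiosyncratic noise. Only then does the second-order measure derivative carry $\sigma_{o,{\bm \pi}}\sigma_{o,{\bm \pi}}\trans$ rather than the full $\int\sigma_o\sigma_o\trans\,d{\bm \pi}$. I would settle this first by writing the It\^o formula for $\varphi(\mu_s)$ with cylindrical test functions $\varphi(\mu)=F(\langle f_1,\mu\rangle,\dots,\langle f_k,\mu\rangle)$, and then extend by density to all of $\Cc^{1,2}$.
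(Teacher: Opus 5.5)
Your proposal is correct and follows essentially the same route as the paper: the flow (dynamic programming) identity for $\tilde J(\cdot,\cdot;{\bm \pi})$, It\^o's formula along the flow of conditional measures, then dividing by $h$ and letting $h\to 0$. The paper steps from $t-h$ to $t$ while you step from $t$ to $t+h$, which is immaterial. Your explicit check that $\bar B$ is integrated out in $\Lc(X_s^{\bm \pi}|\Gc_s)$ is exactly the point the paper leaves implicit: the full $\int(\sigma\sigma\trans+\sigma_o\sigma_o\trans)\,d{\bm \pi}$ then enters the $\partial_x\partial_\mu$ term, while only $\sigma_{o,{\bm \pi}}(x)\sigma_{o,{\bm \pi}}(x')\trans$ enters the $\partial_\mu^2$ term.
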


\begin{proof}
From the flow property of $\mu_s^{t-h, \mu} = \mu_s^{t, \mu_t^{t-h, \mu}}$ for any $0 \leq h \leq t$
\begin{align*}
\tilde J(t-h, \mu; {\bm \pi}) &= e^{-\beta h} \E^e\Big[\int_{t - h}^t e^{-\beta(s - t)} \big(r_{\bm \pi}(s, \mu_s)+ \gamma \Ec(s, \mu_s, {\bm \pi})\big)ds\Big]+ e^{-\beta h} \E^e[\tilde J(t, \mu_t; {\bm \pi})],
\end{align*}
In view that $\tilde J(\cdot, \cdot; {\bm \pi}) \in \Cc^{1, 2}([0, T] \times \Pc_2(\R^d))$, applying It\^o's formula in \cite{CarD2} leads to
\begin{align}\label{equ:regularity-J-t}
&h^{-1} \big(\tilde J(t-h, \mu; {\bm \pi}) - \tilde J(t, \mu; {\bm \pi})\big) \\
 =& h^{-1} e^{-\beta h} \E^e\Big[\int_{t - h}^t e^{-\beta(s - t)} \big(\hat r_{\bm \pi}(s, \mu_s)+ \gamma \Ec(s,\mu_s, {\bm \pi})\big)ds\Big] \nonumber\\
& + h^{-1} e^{-\beta h}\E^e\big[\tilde J(t, \mu_t; {\bm \pi}) - \tilde J(t, \mu; {\bm \pi})\big] + h^{-1} (e^{-\beta h} - 1) \tilde J(t, \mu; {\bm \pi}) \nonumber\\
= & h^{-1} e^{-\beta h} \E^e\Big[\int_{t - h}^t \mathscr{H}^\gamma(s, \mu_s, {\bm \pi}; {\bm \pi})ds\Big] + h^{-1} (e^{-\beta h} - 1) \tilde J(t, \mu; {\bm \pi}).\nonumber
\end{align}
Letting $h \to 0$, we obtain \eqref{equ:dynamic-programming-equation}.
\end{proof}

Under proper conditions, the optimal value function $\tilde J^*$ in \eqref{average:optimal-value} satisfies the HJB equation
\begin{align}
& \frac{\partial \tilde J^*}{\partial t}(t, \mu) + \sup_{{\bm \pi} \in \Pc(\Ac|\R^d)} \biggl\{\int_{\R^d \times \Ac} H\big(t, x, \mu, a,\partial_\mu \tilde J^*(t, \mu)(x), \partial_x\partial_\mu \tilde J^*(t, \mu)(x)\big){\bm \pi}(a|x)da\mu(dx) \nonumber\\
&+ \frac{1}{2}\int_{\R^d \times \R^d} {\rm Tr}\Big(\sigma_{o, {\bm \pi}}(t, x, \mu)\sigma_{o, {\bm \pi}}(t, x', \mu) \trans \partial_\mu^2 \tilde J^*(t, \mu)(x, x')\Big)\mu(dx) \otimes \mu(dx') \label{HJB}\\
&+ \gamma \mathcal{E}(t, \mu, {\bm \pi})\biggl\} - \beta \tilde J^*(t, \mu) =0. \nonumber
\end{align}
Here, for fixed $(t, \mu) \in [0, T] \times \Pc_2(\R^d)$, we will frequently identify a policy ${\bm \pi} \in \Pi$ with the probability transition  kernel ${\bm \pi}\in \Pc_{ac}(\Ac|\R^d)$ by abuse of notations, and hereafter we will not distinguish $\Pi$ and $\Pc_{ac}(\Ac|\R^d)$ when there is no confusion.

Note that the sup operator in \eqref{HJB} for the optimal value function leads to a fully nonlinear PDE in the Wasserstein space. The conventional policy iteration is to linearize \eqref{HJB} from a given policy and hope to iterate the policy to reach the optimal one. This motivates us to consider the functional $\mathscr{H}: [0, T] \times \Pc_2(\R^d) \times \Pc_{ac}(\Ac|\R^d) \to \R$ as the {\it integrated Hamiltonian} under the fixed policy ${\bm \pi}$ defined by
\begin{align}\label{equ:F}
\mathscr{H}(t, \mu, {\bm h}; {\bm \pi}) &:= \int_{\R^d \times \Ac} H\big(t, x, \mu, a,\partial_\mu \tilde J(t, \mu; {\bm \pi})(x), \partial_x\partial_\mu \tilde J(t, \mu; {\bm \pi})(x)\big) {\bm h}(a| x) da \mu(dx)\\
&\;\;\; + \frac{1}{2}\int_{\R^d \times \R^d}{\rm Tr}\Big(\sigma_{o, {\bm h}}(t, x, \mu)\sigma_{o, {\bm h}}(t, x', \mu) \trans \partial_\mu^2 \tilde J(t, \mu; {\bm \pi})(x, x')\Big)\mu(dx) \otimes \mu(dx'). \nonumber
\end{align}
Due to the dependence of the coefficient $\sigma_o$ on the action $a$, $\mathscr{H}(t, \mu, {\bm h}; {\bm \pi})$ is nonlinear in ${\bm h}$. For the fixed policy ${\bm \pi}$, we consider the following entropy-regularized optimization problem over the space of policies ${\bm h} $ (as the probability transition kernel ${\bm h}\in \Pc_{ac}(\Ac|\R^d)$ with the fixed $(t,\mu)$):
\begin{align}\label{equ:regularized_minimization}
\sup_{{\bm h} \in \Pc_{ac}(\Ac|\R^d)} \Big(\mathscr{H}(t, \mu, {\bm h}; {\bm \pi}) + \gamma \mathcal{E}(t, \mu, {\bm h})\Big) =: \sup_{{\bm h} \in \Pc_{ac}(\Ac|\R^d)} \mathscr{H}^\gamma(t, \mu, {\bm h}; {\bm \pi}).
\end{align}

Given the current policy $\bm \pi$, to characterize the maximizer in \eqref{equ:regularized_minimization}, called the optimal one-step iterated policy, we adopt two notions of concavity with respect to measures, namely, the classical concavity and the displacement concavity  in \cite{McCann97} and \cite{Villani09} to our setting with respect to probability transition kernels.

\begin{Assumption}\label{assum:2nd-mu-derivative-J}
For a fixed ${\bm \pi} \in \Pi$ and $(t, \mu) \in [0, T] \times \Pc_2(\R^d)$, the integrated Hamiltonian $\mathscr{H}$ in \eqref{equ:F} satisfies either of the following two conditions: for ${\bm h}_0, {\bm h}_1$ in $\Pc_{ac}(\Ac|\R^d)$ and any $\theta \in [0, 1]$
\begin{itemize}
 \item [(i)] $\mathscr{H}$ is concave in the classical sense
\begin{align*}
\mathscr{H}(t, \mu, \theta {\bm h}_0 + (1 - \theta){\bm h}_1; {\bm \pi}) \geq \theta \mathscr{H}(t, \mu, {\bm h}_0; {\bm \pi}) + (1 -\theta)\mathscr{H}(t, \mu, {\bm h}_1; {\bm \pi}),
\end{align*}
 \item [(ii)] $\mathscr{H}$ is concave in ${\bm h} \in \Pc_{ac}(\Ac|\R^d)$ in the displacement concave sense if there exists a geodesic ${\bm h}_\theta: = \P^2_{\theta \phi_{{\bm h_0}}(\cdot, U) + (1 - \theta) \phi_{{\bm h}_1}(\cdot, U)}$
 \begin{align*}
 \mathscr{H}(t, \mu, {\bm h}_\theta; {\bm \pi}) \geq \theta \mathscr{H}(t, \mu, {\bm h}_0; {\bm \pi}) + (1 - \theta) \mathscr{H}(t, \mu, {\bm h}_1; {\bm \pi}),
 \end{align*}
 where for any ${\bm h} \in \Pc_{ac}(\Ac|\R^d)$, $\phi_{{\bm h}}(\cdot, U)$ is a random variable on $(\Omega^2, \Fc^2, \P^2)$ with $\P^2_{\phi_{{\bm h}}(x, U)} = {\bm h}(\cdot|x)$.
\end{itemize}
\end{Assumption}

\begin{Remark} The difference between the classical concavity and the displacement concavity lies in the means of interpolation.
Both notions have their own merits:
\begin{itemize}
\item When $\sigma_o$ is not controlled, $\mathscr{H}$  is linear in ${\bm h} \in \Pc_{ac}(\Ac|\R^d)$ and hence concave in the classical sense. However, to make sure that $\mathscr{H}$ is displacement concave in ${\bm h} \in \Pc_{ac}(\Ac|\R^d)$, we have to assume that $H(t, x, \mu, a, p, q)$ is concave in $a \in \Ac$. Therefore, it would be better to utilize the classical concavity rather than the displacement concavity in this case.

\item When $\sigma_o$ is controlled,  one necessary and sufficient condition that $\mathscr{H}$ in \eqref{equ:F} is concave in ${\bm h}$ is that $\partial_\mu^2 J(t, \mu; {\bm \pi})$ satisfies for any ${\bm h}, {\bm h}' \in \Pc_{ac}(\Ac|\R^d)$
\begin{align*}
 \int_{\R^d \times \R^d}{\rm Tr}\Big(\big(\sigma_{o,  {\bm h}} -\sigma_{o, {\bm h}'}\big)(t, x, \mu)\big(\sigma_{o, {\bm h}} - \sigma_{o, {\bm h}'}\big)(t, x', \mu) \trans \partial_\mu^2 J(t, \mu; {\bm \pi})(x, x')\Big)\mu(dx) \otimes \mu(dx') \leq 0.
\end{align*}
However, some classical LQ-MFC problems, such as the mean-variance optimization problems, do not satisfy the above inequality. Instead, one can easily check that the displacement concavity condition holds for LQ-MFC problems; see section \ref{sec:LQ} for more details.
\end{itemize}

\end{Remark}


Essentially, we can use the derivative of $\mathscr{H}^\gamma$ with respect to the policy to characterize the improved policy. To this end, in the same spirit of the linear functional derivative with respect to the probability measure, see Definition 5.43 \cite{CarD}, we consider the following definition of the partial linear functional derivative  with respect to the probability transition kernel, see section 2.1 in \cite{CKR2023}.

\begin{Definition}[Partial linear functional derivative ] \label{def:linear-derivative}
Fix $\mu \in \Pc_2(\R^d)$. The functional $\frac{\delta G}{\delta {\bm h}}: \Pc_{ac}(\Ac|\R^d) \times \R^d \times \Ac \to \R$ is said to be a partial linear functional derivative  of $G: \Pc_{ac}(\Ac|\R^d) \to \R$ with respect to the probability transition kernel ${\bm h} \in \Pc_{ac}(\Ac|\R^d)$ if for any ${\bm h}, {\bm h}' \in \Pc_{ac}(\Ac|\R^d)$,
\begin{align*}
G({\bm h}') - G({\bm h}) =  \int_0^1 \int_{\R^d}\int_ {\Ac} \frac{\delta G}{\delta {\bm h}}((1 -\lambda){\bm h} + \lambda{\bm h}')(x, a)({\bm h}' - {\bm h})(a|x)da \mu(dx) d\lambda.
\end{align*}
Moreover, there exists a constant $C >0$, possibly depending on $\mu$, such that $$\sup_{{\bm h} \in \Pc_{ac}(\Ac|\R^d)} \big|\frac{\delta G}{\delta {\bm h}}({\bm h})(x, a)\big| \leq C(1 + |x|^2 + |a|^2).$$
\end{Definition}

\begin{Remark}
The partial linear functional derivative  with respect to the  probability transition kernel is unique up to an additive function $\kappa(x, \mu)$ satisfying $\int_{\R^d} \kappa(x, \mu)\mu(dx) = C$ for some constant. See  \cite{BCL2021} for the partial $L$-derivative with respect to probability transition kernels.
\end{Remark}

Given the current policy ${\bm \pi} \in \Pi$, the next result gives the existence, uniqueness and the first-order condition of the optimal one-step iterated policy for the problem  \eqref{equ:regularized_minimization}.

\begin{Theorem}\label{lemma:first-order}
Let Assumptions \ref{ass:b-sigma-sigmao} and \ref{assum:2nd-mu-derivative-J} hold. Assume that $J(\cdot, \cdot; {\bm \pi})\in \Cc^{1, 2}([0, T] \times \Pc_2(\R^d))$.  Given ${\bm \pi} \in \Pi$ and $(t, \mu) \in [0, T] \times \Pc_{2 + \delta}(\R^d)$ for some $\delta >0$.  The entropy-regularized integrated Hamiltonian $\mathscr{H}^\gamma(t, \mu, {\bm h}; {\bm \pi})$ in \eqref{equ:regularized_minimization} has a unique maximizer ${\bm h}^* \in \Pi$ if and only if ${\bm h}^*$ satisfies
\begin{align}\label{equ:first-order}
\frac{\delta \mathscr{H}}{\delta {\bm h}}(t, \mu,  {\bm h}^*; {\bm \pi}) (x, a)- \gamma \log {\bm h}^*(a|t, x, \mu)   = \kappa(t, x, \mu),
\end{align}
where $\frac{\delta \mathscr{H}}{\delta {\bm h}}$ is given by
\begin{align}\label{equ:derivativeF}
\frac{\delta \mathscr{H}}{\delta {\bm h}}(t, \mu, {\bm h}; {\bm \pi})(x, a) = & H\big(t, x, \mu, a,\partial_\mu J(t, \mu; {\bm \pi})(x), \partial_x\partial_\mu J(t, \mu; {\bm \pi})(x)\big)\\
&+\int_{\R^d} {\rm Tr}\Big(\sigma_o(t, x, \mu, a) \sigma_{o, {\bm h}}(t, x', \mu)\trans\partial_\mu^2 J(t, \mu; {\bm \pi})(x, x')\Big)\mu(dx'). \nonumber
\end{align}
 Or equivalently, ${\bm h}^*$ is the fixed point of
of $\Phi_{\bm \pi}: \Pi \to \Pi$ defined by
\begin{align}\label{equ:mapI}
 \Phi_{\bm \pi}(\bm h)(a|t, x, \mu)  = \frac{\exp\Big\{\frac{1}{\gamma} \frac{\delta \mathscr{H}}{\delta {\bm h}}(t, \mu, \bm h; {\bm \pi})(x, a)\Big\}}{\int_{\Ac} \exp\Big\{\frac{1}{\gamma} \frac{\delta \mathscr{H}}{\delta {\bm h}}(t, \mu, {\bm h}; {\bm \pi})(x, a)\Big\}da}.
\end{align}
Consequently, ${\bm h}^*$ is a map of ${\bm \pi}$ and we denote by ${\bm h}^* = \mathcal{I}({\bm \pi})$.
\end{Theorem}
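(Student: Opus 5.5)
The plan is to treat, for fixed $(t,\mu)$ and ${\bm \pi}$, the map ${\bm h}\mapsto \mathscr{H}^\gamma(t,\mu,{\bm h};{\bm \pi})$ as a concave functional on the convex set $\Pc_{ac}(\Ac|\R^d)$, compute its partial linear functional derivative, and then argue existence, uniqueness and the first-order characterization separately.

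\textbf{Derivative computation.} The first term of \eqref{equ:F} is linear in ${\bm h}$. The second term is a quadratic form
\[
Q({\bm h})=\tfrac12\langle \sigma_{o,{\bm h}}, \sigma_{o,{\bm h}}\rangle_{\partial_\mu^2\tilde J},
\]
where the pairing is symmetric because $\partial_\mu^2 \tilde J(t,\mu;{\bm \pi})(x,x')^\top=\partial_\mu^2\tilde J(t,\mu;{\bm \pi})(x',x)$. Expanding
\[
Q({\bm h}')-Q({\bm h})=\int_0^1\frac{d}{d\lambda}Q\big((1-\lambda){\bm h}+\lambda{\bm h}'\big)\,d\lambda
\]
and using the linearity of ${\bm h}\mapsto\sigma_{o,{\bm h}}$ gives exactly the second line of \eqref{equ:derivativeF}. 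The growth bound $C(1+|x|^2+|a|^2)$ required in Definition \ref{def:linear-derivative} follows from Assumption \ref{ass:b-sigma-sigmao}(iii), the $\Cc^{1,2}$ bounds on $\partial_\mu\tilde J$, $\partial_x\partial_\mu \tilde J$ and $\partial^2_\mu\tilde J$, and $\mu\in\Pc_{2+\delta}$. The entropy term $\gamma\Ec(t,\mu,{\bm h})$ is strictly concave in ${\bm h}$, with formal derivative $-\gamma(\log{\bm h}+1)$; the constant is absorbed into $\kappa$.

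\textbf{Sufficiency and uniqueness.} Suppose ${\bm h}^*$ satisfies \eqref{equ:first-order}, i.e.\ ${\bm h}^*=\Phi_{\bm \pi}({\bm h}^*)$, which is the same statement after exponentiating and normalizing to fix $\kappa$. For any competitor ${\bm h}$, concavity of $\mathscr{H}$ (Assumption \ref{assum:2nd-mu-derivative-J}(i)) gives
\[
\mathscr{H}({\bm h})-\mathscr{H}({\bm h}^*)\le \int\!\!\int \frac{\delta\mathscr{H}}{\delta{\bm h}}({\bm h}^*)\,({\bm h}-{\bm h}^*)\,da\,\mu(dx).
\]
Substituting $\frac{\delta \mathscr{H}}{\delta{\bm h}}({\bm h}^*)=\gamma\log{\bm h}^*+\kappa$, the $\kappa$ term integrates to zero against ${\bm h}-{\bm h}^*$. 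One then arrives at
\[
\mathscr{H}^\gamma({\bm h})-\mathscr{H}^\gamma({\bm h}^*)\le-\gamma\int \mathrm{KL}\big({\bm h}(\cdot|x)\,\|\,{\bm h}^*(\cdot|x)\big)\,\mu(dx)\le 0,
\]
with equality only if ${\bm h}={\bm h}^*$ $\mu$-a.e. This proves that ${\bm h}^*$ is the maximizer and that it is unique. Under displacement concavity (ii), I would instead run the analogous argument along the geodesic ${\bm h}_\theta$: differentiate at $\theta=0$, and use that the entropy of ${\bm h}_\theta$ is bounded below by its value under the linear interpolation up to a first-order term. This case needs more care, and I would treat it via a one-sided directional derivative inequality.

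\textbf{Necessity and existence.} If ${\bm h}^*$ is the maximizer, perturb it to ${\bm h}_\epsilon=(1-\epsilon){\bm h}^*+\epsilon{\bm h}$ and differentiate at $\epsilon=0^+$. This gives
\[
\int\!\!\int\Big(\frac{\delta\mathscr{H}}{\delta{\bm h}}({\bm h}^*)-\gamma\log{\bm h}^*\Big)({\bm h}-{\bm h}^*)\le 0
\]
for all ${\bm h}$. Taking ${\bm h}$ concentrated variations, with full support ensuring two-sided perturbations, shows that the integrand is $a$-independent, hence equal to $\kappa(t,x,\mu)$. For existence, which I expect to be the main obstacle, I would show that $\mathscr{H}^\gamma$ is bounded above and coercive: the entropy is controlled by second moments, while $\mathscr{H}$ grows at most like the second moment of ${\bm h}$. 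Then a maximizing sequence is tight, the entropy is upper semicontinuous, and the quadratic form is continuous under the moment bounds, so a maximizer exists. Alternatively, I would show $\Phi_{\bm \pi}$ admits a fixed point via Schauder on a suitable set of kernels with uniform $2+\delta$ moment bounds. Finally, I would verify that ${\bm h}^*$ lies in $\Pi$ (full support, Lipschitz properties, entropy bounds) from its Gibbs form.
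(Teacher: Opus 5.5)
Your case (i) argument is correct, and sharper than the paper's Step 2. Suppressing $(t,\mu,{\bm \pi})$: because $\mathscr{H}$ is quadratic in ${\bm h}$, condition \eqref{equ:first-order} gives the exact identity
\begin{align*}
\mathscr{H}^\gamma({\bm h}')-\mathscr{H}^\gamma({\bm h}^*)=Q({\bm h}'-{\bm h}^*)-\gamma\int_{\R^d}\mathrm{KL}\big({\bm h}'(\cdot|x)\,\|\,{\bm h}^*(\cdot|x)\big)\mu(dx)\quad\text{for every }{\bm h}',
\end{align*}
where $Q(\eta)$ is the quadratic $\sigma_o$-term of \eqref{equ:F} evaluated at the signed kernel $\eta$. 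Classical concavity means exactly $Q\le 0$, which yields sufficiency and uniqueness at once. Your linear-perturbation necessity argument uses no concavity, so it covers both cases.

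The genuine gap is case (ii). You only sketch it, yet it is the case the paper actually needs in Section \ref{sec:LQ}. There $\mathscr{H}$ contains $\bar{\bm h}\trans(V_t^*-U_t^*)\bar{\bm h}$ with $V_t^*-U_t^*=F_o\trans(\Gamma^*-\Lambda^*)F_o$, which need not be $\preceq 0$. So $Q\le 0$ fails and the KL bound is unavailable. Your hint, a lower bound on the entropy of ${\bm h}_\theta$, goes in the wrong direction for sufficiency. Take ${\bm h}_\theta$ along the optimal maps $\Tc$ pushing ${\bm h}^*(\cdot|x)$ to ${\bm h}(\cdot|x)$. Then Assumption \ref{assum:2nd-mu-derivative-J}(ii), together with McCann's displacement concavity of $\Ec$, gives $\theta\big(\mathscr{H}^\gamma({\bm h})-\mathscr{H}^\gamma({\bm h}^*)\big)\le\mathscr{H}^\gamma({\bm h}_\theta)-\mathscr{H}^\gamma({\bm h}^*)$. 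What you need is therefore an $o(\theta)$ \emph{upper} bound on the right-hand side.

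The paper gets this bound by differentiating along the displacement. This produces $\int\nabla_a\big(\frac{\delta\mathscr{H}}{\delta{\bm h}}({\bm h}^*)-\gamma\log{\bm h}^*\big)\trans(\Tc(a)-a)\,{\bm h}^*(a|x)da\,\mu(dx)$, which vanishes under \eqref{equ:first-order}. Within your framework it is quicker to apply the identity above with ${\bm h}'={\bm h}_\theta$ and drop the KL term. Then use $Q({\bm h}_\theta-{\bm h}^*)=O(\theta^2)$, which holds because $\partial_\mu^2\tilde J$ is bounded and $\sigma_{o,{\bm h}_\theta}-\sigma_{o,{\bm h}^*}=O(\theta)$ when $a\mapsto\sigma_o$ is Lipschitz. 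Uniqueness in case (ii) does not follow from either computation; it needs strict displacement concavity, which the paper invokes in its Step 1.

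Your existence step has the same direct-method skeleton as the paper's Step 1, but the tightness claim is unsupported. Set $m_2({\bm h}):=\int_{\R^d\times\Ac}|a|^2{\bm h}(a|x)da\,\mu(dx)$. The bounds $|\mathscr{H}({\bm h})|\le C(1+m_2({\bm h}))$ and $\Ec\lesssim 1+\log m_2({\bm h})$ give neither an upper bound on $\mathscr{H}^\gamma$ nor coercivity. Indeed, Assumption \ref{ass:b-sigma-sigmao}(iii) allows $H$ to grow like $+|a|^2$. Take $\Ac=\R^p$, $r=|a|^2$, and $b,\sigma,\sigma_o$ independent of $a$. Case (i) holds trivially, yet $\mathscr{H}^\gamma$ evaluated at $\Nc(0,sI_p)$ tends to $+\infty$ as $s\to\infty$, and no kernel satisfies \eqref{equ:first-order}.

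The paper instead obtains compactness from the entropy superlevel set $\Sc_\mu$. It combines this with the uniform $(2+\delta)$-moment bound that Definition \ref{def:pi}(iii) imposes on all of $\Pi$, giving $\Wc_2$-compactness. It then uses $\Wc_2$-continuity of $\mathscr{H}$ and upper semicontinuity of $\Ec$. You need that uniform bound or, more naturally, an explicit coercivity hypothesis $\mathscr{H}({\bm h})\le C-c\,m_2({\bm h})$ with $c>0$, which holds in the LQ setting under $({\bf H})$. If only the moment constraint of $\Pi$ is used and it is active, the constrained maximizer need not satisfy the unconstrained Gibbs condition. Your Schauder alternative needs the same ingredient to produce an invariant set.
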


\begin{proof}
\noindent {\it Step-1}.\;  We first show the existence and uniqueness of the maximizer ${\bm h}^*$ of \eqref{equ:regularized_minimization}. In this step, we change the admissible policy space from $\Pc_{ac}(\Ac|\R^d)$ to $\Pc(\Ac|\R^d)$, which is more convenient for the compactness and does not affect the result. This is because for those policies not in $\Pc_{ac}(\Ac|\R^d)$, we set $\Ec(t, \mu, {\bm h}) = -\infty$ by convention. Thereby, if a maximizer ${\bm h}$ exists, it belongs to $\Pc_{ac}(\Ac|\R^d)$.

Define $\Vc_\mu: = \{{\bm \nu} \in \Pc_2(\R^d \times \Ac): {\bm \nu}(\cdot,\R^d) = \mu\}$ as the space of probability measures on $\R^d \times \Ac$ whose first marginal $\mu$ is fixed.
Then, the one-to-one correspondence between $\Pc_{ac}(\Ac|\R^d)$ and $\Vc_\mu$ holds, i.e.,  for each ${\bm \nu} \in \Vc_\mu$, there exists a ${\bm h}(da|x) \in \Pc(\Ac|\R^d)$ by disintegration such that ${\bm \nu}(dx, da) = {\bm h}(da|x) \mu(dx)$. Conversely, each ${\bm h}(da|x) \in \Pc(\Ac|\R^d)$ induces a probability measure ${\bm \nu} \in \Vc_\mu$. Thanks to the equivalence between $\Vc_\mu$ and $\Pc(\Ac|\R^d)$, we may rewrite $\mathscr{H}^\gamma(t, \mu, {\bm h}; {\bm \pi})$ as a functional of ${\bm \nu} \in \Vc_\mu$ with a slight abuse of notation and
\begin{align}
\Ec(t, \mu, {\bm h}) = \Ec(t, \mu, {\bm \nu}), \; \mathscr{H}(t, \mu, {\bm \nu}; {\bm \pi}) = \mathscr{H}(t, \mu, {\bm h}; {\bm \pi}),
\end{align}
with
\begin{align*}
&\mathscr{H}(t, \mu, {\bm \nu}; {\bm \pi})
=\int_{\R^d \times \Ac} H\big(t, x, \mu, a,\partial_\mu \tilde J(t, \mu; {\bm \pi})(x), \partial_x\partial_\mu \tilde J(t, \mu; {\bm \pi})(x)\big) {\bm \nu}(dx, da)\\
& + \frac{1}{2}\int_{\R^{2d} \times \Ac^2}{\rm Tr}\Big(\sigma_{o}(t, x, \mu, a)\sigma_{o}(t, x', \mu, a') \trans \partial_\mu^2 \tilde J(t, \mu; {\bm \pi})(x, x')\Big){\bm \nu}(dx, da) \otimes {\bm \nu}(dx', da'),
\end{align*}
and whenever $\nu$ is absolutely continuous with respect to $\mu(dx) da$
\begin{align*}
\Ec(t, \mu, {\bm \nu}) & = -\int_{\R^d \times \Ac} \log \frac{{\bm \nu}(dx, da)}{\mu(dx)da} {\bm \nu}(dx, da).
\end{align*}
Otherwise, we set $\Ec({\bm \nu}) = - \infty$. Hence the optimization problem \eqref{equ:regularized_minimization} becomes
 \begin{align} \label{equ:reformulated-regularized_minimization}
 \sup_{{\bm \nu} \in \Vc_\mu} \mathscr{H}^\gamma(t, \mu, {\bm \nu}; {\bm \pi}).
 \end{align}
The next step is to reduce the problem \eqref{equ:reformulated-regularized_minimization} to maximizing an upper semicontinuous function on a {\it compact} set $\Sc_\mu$ of $\Vc_\mu$ under the $\Wc_2$ metric. First, note that there exists some $\bar{\bm \nu} \in \Vc_\mu$ such that $\mathscr{H}^\gamma(t, \mu, \bar {\bm \nu}; {\bm \pi}) < +\infty$ and let us introduce a subset $\Sc_\mu$ of $\Vc_\mu$
\begin{align*}
\Sc_\mu: = \Big\{{\bm \nu} \in \Vc_\mu: \gamma \mathcal{E}({\bm \nu}) \geq \mathscr{H}^\gamma(t, \mu, \bar {\bm \nu}; {\bm \pi}) - \sup_{{\bm \nu}' \in \Vc_\mu} \mathscr{H}(t, \mu, {\bm \nu}'; {\bm \pi})\Big\}.
\end{align*}
It follows from  the definition of $\Sc_\mu$ that $\mathscr{H}^\gamma(t, \mu, {\bm \nu}; {\bm \pi}) \leq \mathscr{H}^\gamma(t, \mu, \bar {\bm \nu}; {\bm \pi})$ for any ${\bm \nu} \notin \Sc_\mu$, hence $\sup_{{\bm \nu} \in \Vc_\mu} \mathscr{H}^\gamma(t, \mu, {\bm \nu}; {\bm \pi}) = \sup_{{\bm \nu} \in \Sc_\mu} \mathscr{H}^\gamma(t, \mu, {\bm \nu}; {\bm \pi})$. As the sublevel set of $-\mathcal{E}({\bm \nu})$ is weakly compact by Lemma 1.4.3 in \cite{Dupuis2011}, $\Sc$ is weakly compact.
Furthermore, by Definition \ref{def:pi} (iii), we have
\begin{align*}
\sup_{{\bm \nu} \in \Sc_\mu} \int_{\R^d \times \Ac} (|x|^2 + |a|^2)^{(2 + \delta)/2} {\bm \nu}(dx, da)\leq  C \int_{\R^d}\Big(|x|^{2 + \delta} + \sup_{{\bm h} \in \Pi} \int_{\Ac}|a|^{2 + \delta} {\bm h}(a|t, x, \mu)da \Big)\mu(dx)
 < +\infty.
\end{align*}
Theorem 5.5 in \cite{CarD} guarantees that $\Sc_\mu$ is compact in $\Pc_2(\R^d \times \Ac)$ under the $\Wc_2$ metric.

By Assumption \ref{ass:b-sigma-sigmao}, $H\big(t, x, \mu, a,\partial_\mu \tilde J(t, \mu; {\bm \pi})(x), \partial_x\partial_\mu \tilde J(t, \mu; {\bm \pi})(x)\big)$ is continuous in $(x, a)$ and square integrable.
Similarly, ${\rm Tr}\Big(\sigma_{o}(t, x, \mu, a)\sigma_{o}(t, x', \mu, a') \trans \partial_\mu^2 \tilde J(t, \mu; {\bm \pi})(x, x')\Big)$ is continuous in $(x, x', a, a')$ and square integrable.
Therefore, by Lemma A.3 in \cite{lacker2015}, $\mathscr{H}(t, \mu, {\bm \nu}; {\bm \pi})$ is continuous under the $\Wc_2$ metric. On the other hand, by Lemma 1.4.3 in \cite{Dupuis2011}, $\mathcal{E}(t, \mu, {\bm \nu})$ is upper semicontinuous on $\Pc_2(\R^d \times \Ac)$ under the weak topology and hence continuous under the $\Wc_2$ metric on $\Pc_2(\R^d \times \Ac)$. Therefore, $\mathscr{H}^\gamma$ is continuous and thus its supremum is attained in $\Sc_\mu$, which implies the existence of the maximizer of $\mathscr{H}^\gamma$. 

In view that $\mathscr{H}^\gamma$ is strictly concave or strictly displacement concave, the maximizer ${\bm \nu}^* \in \Vc_\mu$ is unique, which implies the uniqueness of the maximizer ${\bm h}^* \in \Pc(\Ac|\R^d)$ by disintegration. Note that $\Ec(t, \mu, {\bm \nu}^*) > -\infty$, it holds that ${\bm h}^* \in \Pc_{ac}(\Ac|\R^d)$.

\medskip

\noindent {\it Step-2}.\; We verify the sufficient and necessary condition of the first-order condition under the condition (i) when $\mathscr{H}^\gamma$ is strictly concave in ${\bm h} \in \Pc_{ac}(\Ac|\R^d)$.

\noindent {\it Step-2.1}\; Let us first prove the sufficient condition. Let ${\bm h}^*$ satisfy \eqref{equ:first-order}. Denote ${\bm h}^{\theta}: = (1 -\theta) {\bm h}^* + \theta {\bm h}$ for any ${\bm h} \in \Pc_{ac}(\Ac|\R^d)$ and $0 < \theta \leq 1$.
As $\mathscr{H}$ is concave in ${\bm h} \in \Pc_{ac}(\Ac|\R^d)$, we have
\begin{align}
\frac{1}{\theta}\big(\mathscr{H}(t, \mu, {\bm h}^\theta; {\bm \pi}) - \mathscr{H}(t, \mu, {\bm h}^*; {\bm \pi})\big)\leq &  \frac{1}{\theta} \int_{\R^d \times \Ac} \frac{\delta \mathscr{H}}{\delta {\bm h}}(t, \mu, {\bm h}^*)(x, a) ({\bm h}^\theta - {\bm h}^*)(a|x)da \mu(dx) \nonumber\\
= &\int_{\R^d \times \Ac}\frac{\delta \mathscr{H}}{\delta {\bm h}}(t, \mu, {\bm h}^*)(x, a) ({\bm h}- {\bm h}^*)(a|x)da \mu(dx). \label{proof:F}
\end{align}
Similarly, by the concavity of $\mathcal{E}(t, \mu, {\bm h})$ in ${\bm h}$, we get that
\begin{align}
\frac{\gamma}{\theta}\big(\mathcal{E}(t, \mu, {\bm h}^\theta) - \mathcal{E}(t, \mu, {\bm h}^*))\big) \leq & \frac{\gamma}{\theta} \int_{\R^d \times \Ac} \frac{\delta \mathcal{E}}{\delta {\bm h}}(t, \mu, {\bm h}^*)(x, a) ({\bm h}^\theta - {\bm h}^*)(a|x)da \mu(dx) \nonumber\\
=& -\gamma \int_{\R^d \times \Ac} \log{\bm h}^*(a|x) ({\bm h}- {\bm h}^*)(a|x)da \mu(dx).
\label{proof:mathcalE}
\end{align}
Summing \eqref{proof:F} and \eqref{proof:mathcalE}, we obtain that
\begin{align*}
& \frac{\mathscr{H}^\gamma(t, \mu, {\bm h}^\theta; {\bm \pi})- \mathscr{H}^\gamma(t, \mu, {\bm h}^*; {\bm \pi})}{\theta}\\
\leq & \int_{\R^d \times \Ac}\Big(\frac{\delta \mathscr{H}}{\delta {\bm h}}(t, \mu, {\bm h}^*; {\bm \pi})(x, a) - \gamma \log {\bm h}^*(a|x)\Big)({\bm h}- {\bm h}^*)(a|x)da \mu(dx)= 0.
\end{align*}
This implies that $\mathscr{H}^\gamma(t, \mu, {\bm h}^\theta; {\bm \pi})  \leq \mathscr{H}^\gamma(t, \mu, {\bm h}^*; {\bm \pi})$ for any ${\bm h} \in \Pi$ if $\theta = 1$.

\noindent {\it Step-2.2}.\; We then show the necessary condition. Let ${\bm h}^*$ be the maximizer of $\mathscr{H}^\gamma(t, \mu, {\bm h}; {\bm \pi})$. It then holds that
\begin{align*}
& \frac{\mathscr{H}^\gamma(t, \mu, {\bm h}^\theta; {\bm \pi}) - \mathscr{H}^\gamma(t, \mu, {\bm h}^*; {\bm \pi})}{\theta}\\
=& \frac{1}{\theta}\Big(\mathscr{H}(t, \mu, {\bm h}^\theta) - \mathscr{H}(t, \mu, {\bm h}^*)+ \gamma\big(\mathcal{E}(t, \mu, {\bm h}^\theta) - \mathcal{E}(t, \mu, {\bm h}^*)\big)\Big) \\
= & \frac{1}{\theta}\int_0^1 \int_{\R^d \times \Ac}\Big(\frac{\delta \mathscr{H}}{\delta {\bm h}} (t, \mu, {\bm h}^{\lambda, \theta}; {\bm \pi})(x, a) - \gamma \log {\bm h}^{\lambda, \theta} (a|x)\Big)({\bm h}^\theta - {\bm h})(a|x)da\mu(dx)d\lambda\\
= & \int_0^1  \int_{\R^d \times \Ac}\Big(\frac{\delta \mathscr{H}}{\delta {\bm h}} (t, x, \mu, {\bm h}^{\lambda, \theta}, a; {\bm \pi}) - \gamma \log{\bm h}^{\lambda, \theta}(a|x)\Big)({\bm h} - {\bm h}^*)(a|x)da\mu(dx),
\end{align*}
where we denote ${\bm h}^{\lambda, \theta} = \lambda {\bm h}^\theta + (1 - \lambda) {\bm h}^*$.

As $\frac{\delta \mathscr{H}}{\delta {\bm h}}$ in \eqref{equ:derivativeF} is continuous and integrable by Assumption \ref{ass:b-sigma-sigmao}, by dominated convergence theorem, we obtain that
\begin{align*}
& \lim_{\theta \to 0}\frac{\mathscr{H}^\gamma(t, \mu, {\bm h}^\theta; {\bm \pi})  - \mathscr{H}^\gamma(t, \mu, {\bm h}^*; {\bm \pi}) }{\theta}\\
= & \int_{\R^d \times \Ac}\Big(\frac{\delta \mathscr{H}}{\delta {\bm h}} (t, \mu, {\bm h}^*; {\bm \pi})(x, a) - \gamma \log{\bm h}^*(a|x)\Big)\big(\frac{{\bm h}}{{\bm h}^*} - 1\big){\bm h}^*(a|x)da\mu(dx) \leq 0,
\end{align*}
for any ${\bm h}$. By the arbitrariness of ${\bm h}$ and Definition \ref{def:pi} (i), it holds that $\frac{\delta \mathscr{H}}{\delta {\bm h}} (t, \mu, {\bm h}^*; {\bm \pi})(x, a) - \gamma \log{\bm h}^*(a|x)=\kappa(t, x, \mu)$.

\medskip

\noindent{\it Step-3}.\;When $\mathscr{H}^\gamma$ is strictly displacement concave, we verify the necessary and sufficient condition of the first-order condition. By the definition of displacement concavity,
$$[0, 1] \ni \theta \mapsto \mathscr{H}^\gamma(t, \mu, \P^2_{(1 -\theta) \phi_{{\bm h}^*}(\cdot, U) + \theta \phi_{{\bm h}}(\cdot, U)}; {\bm \pi})$$ is concave in $\theta \in [0, 1]$. Therefore, ${\bm h}^*$ is the maximizer if and only if for any $\phi_{\bm h}: \R^d \to \Ac$,
\begin{align*}
\frac{d}{d\theta}\mathscr{H}^\gamma(t, \mu, \P^2_{(1 -\theta) \phi_{{\bm h}^*}(\cdot, U) + \theta \phi_{{\bm h}}(\cdot, U)}; {\bm \pi})\big|_{\theta = 0}=0.
\end{align*}
Take $\phi_{\bm h} = \Tc \circ \phi_{{\bm h}^*}$ with the map $\Tc: \Ac \to \Ac$ being injective and surjective, and denote $\phi_{{\bm h}_\theta} := (1 - \theta) \phi_{{\bm h}^*} + \theta \Tc \circ \phi_{{\bm h}^*}$ and  ${\bm h}_\theta := \P^2_{\phi_{{\bm h}_\theta}(\cdot,U)}$. It then holds that
\begin{align*}
& \frac{d}{d\theta}\mathscr{H}^\gamma(t, \mu, {\bm h}_\theta; {\bm \pi})\big|_{\theta = 0} \\
=& \lim_{\theta \to 0} \frac{1}{\theta} \Big(\mathscr{H}^\gamma(t, \mu, {\bm h}_\theta; {\bm \pi}) - \mathscr{H}^\gamma(t, \mu, {\bm h}^*; {\bm \pi})\Big)\\
=& \lim_{\theta \to 0} \frac{1}{\theta}\int_0^1\int_{\R^d \times \Ac} \frac{\delta \mathscr{H}^\gamma}{\delta {\bm h}}(t, \mu, {\bm h}_{\theta, \lambda})(x, a) ({\bm h}_\theta - {\bm h}^*)(a|x)\mu(dx) d \lambda\\
=& \lim_{\theta \to 0} \frac{1}{\theta}\int_0^1\Big(\int_{\R^d \times \Ac} \frac{\delta \mathscr{H}^\gamma}{\delta {\bm h}}(t, \mu, {\bm h}_{\theta, \lambda})(x, a) {\bm h}_\theta(a|x) \mu(dx)-\frac{\delta \mathscr{H}^\gamma}{\delta {\bm h}}(t, \mu, {\bm h}_{\theta, \lambda})(x, a){\bm h}^*(a|x)\mu(dx) \Big)d \lambda\\
=& \lim_{\theta \to 0} \frac{1}{\theta} \int_0^1 \Big(\E^e\Big[\frac{\delta \mathscr{H}^\gamma}{\delta {\bm h}}\big(t, \mu, {\bm h}_{\theta, \lambda})(\xi, \phi_{{\bm h}_\theta}(\xi, U)\big) - \frac{\delta \mathscr{H}^\gamma}{\delta {\bm h}}\big(t, \mu, {\bm h}_{\theta, \lambda})(\xi, \phi_{{\bm h}^*}(\xi, U) \big)\Big]\Big)d \lambda\\
=&\E^e\Big[\nabla_a \frac{\delta \mathscr{H}^\gamma}{\delta {\bm h}}\big(t, \mu, {\bm h}^* \big)(\xi, \phi_{{\bm h}^*}(\xi, U))\trans \big(\Tc \circ \phi_{{\bm h}^*}(\xi, U) - \phi_{{\bm h}^*}(\xi, U)\big)\Big]\\
=& \int_{\R^d \times \Ac} \nabla_a \frac{\delta \mathscr{H}^\gamma}{\delta {\bm h}}\big(t, \mu, {\bm h}^*\big)(x, a)\trans \big(\Tc(a) - a\big){\bm h}^*(a|x)\mu(dx) =0,
\end{align*}
where the second equality follows from the definition of partial linear functional derivative and ${\bm h}_{\theta, \lambda} = (1 - \lambda) {\bm h} + \lambda {\bm h}_\theta$.
By the arbitrariness of the map $\Tc$, we deduce that
\begin{align*}
\nabla_a  \frac{\delta \mathscr{H}^\gamma}{\delta {\bm h}}(t,  \mu, {\bm h}^*; {\bm \pi})(x, a) = \nabla_a \frac{\delta \mathscr{H}}{\delta {\bm h}}(t, \mu, {\bm h}^*; {\bm \pi})(x, a) -\gamma \nabla_a \log {\bm h}^*(a|x) = 0,
\end{align*}
which yields the desired result.
\end{proof}

\begin{Remark}
In particular, when there is no common noise, i.e., $\sigma_o = 0$, or when the common noise is uncontrolled, i.e., $\sigma_o = \sigma_o(t, x, \mu)$, the fixed point of \eqref{equ:mapI} reduces to the conventional Gibbs measure characterization, which is consistent with (2.11) in \cite{weiyu2023}.
When there is no common noise nor mean-field term, \eqref{equ:mapI} becomes (13) in \cite{jiazhou2022}.
\end{Remark}

Based on Theorem \ref{lemma:first-order}, the learning procedure starts with some policy ${\bm \pi}$ and produces a new policy ${\bm \pi}'$ that improves $\mathscr{H}^\gamma(t, \mu, {\bm h}; {\bm \pi})$. The next result shows that the resulting new policy that improves $\mathscr{H}^\gamma(t, \mu, {\bm h}; {\bm \pi})$ will also improve the value function,  and if the iterated new policy cannot improve the value function any more,  it must be an optimal policy.

\begin{Theorem}[Policy improvement]\label{thm:policy_improvement}
For a given ${\bm \pi} \in \Pi$, select a new policy ${\bm \pi}'$ such that $\mathscr{H}^\gamma(s, \mu, {\bm \pi}'; {\bm \pi}) \geq \mathscr{H}^\gamma(s, \mu, {\bm \pi}; {\bm \pi})$  holds for any $ s\in [t, T]$, we then have $J(t, \mu; {\bm \pi}') \geq J(t, \mu; {\bm \pi})$.
\end{Theorem}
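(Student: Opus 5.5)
We prove the claim by the standard comparison argument. We run the conditional measure flow under the new policy ${\bm \pi}'$ and apply It\^o's formula along this flow to the old value function $\tilde J(\cdot,\cdot;{\bm \pi})$. By Proposition \ref{prop:approximation} it suffices to work with $\tilde J$ in the relaxed control formulation. Fix $(t,\mu)$ and let $\mu_s' := \mu_s^{{\bm \pi}'}$, $s\in[t,T]$, be the conditional law of the solution of \eqref{equ:common-noise-average} driven by ${\bm \pi}'$ and started from $\mu$. Since $\tilde J(\cdot,\cdot;{\bm \pi})\in\Cc^{1,2}$, we apply It\^o's formula along conditional measure flows (\cite{CarD2}) to $e^{-\beta(s-t)}\tilde J(s,\mu_s';{\bm \pi})$ between $t$ and $T$.

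The key computation identifies the drift. The generator of the relaxed dynamics under ${\bm \pi}'$ contains three groups of terms. The first is $b_{{\bm \pi}'}\trans\partial_\mu \tilde J$. The second is $\frac12{\rm Tr}\big((\sigma_{{\bm \pi}'}\sigma_{{\bm \pi}'}\trans+{\rm cov}_{{\bm \pi}'}(\sigma)+\sigma_{o,{\bm \pi}'}\sigma_{o,{\bm \pi}'}\trans+{\rm cov}_{{\bm \pi}'}(\sigma_o))\partial_x\partial_\mu\tilde J\big)$, which equals $\frac12{\rm Tr}\big(\int(\sigma\sigma\trans+\sigma_o\sigma_o\trans){\bm \pi}'(da)\,\partial_x\partial_\mu \tilde J\big)$. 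The third is the cross term $\frac12\int\!\!\int{\rm Tr}\big(\sigma_{o,{\bm \pi}'}(x)\sigma_{o,{\bm \pi}'}(x')\trans\partial^2_\mu\tilde J(x,x')\big)$. Only $B$, and not the auxiliary $\bar B$ (which is independent across the conditional copies given $\Gc$), contributes to this cross term. This is exactly where the auxiliary Brownian motions in \eqref{equ:common-noise-average} matter. Altogether the drift equals $\partial_t\tilde J + \mathscr{H}(s,\mu_s',{\bm \pi}';{\bm \pi}) - \beta \tilde J$. Taking expectations (the stochastic integrals are true martingales thanks to the growth bounds on $\partial_\mu\tilde J$ and Assumption \ref{ass:b-sigma-sigmao}), and using $\tilde J(T,\cdot;{\bm \pi})=\hat g$, we get
\begin{align*}
\E^e\big[e^{-\beta(T-t)}\hat g(\mu_T')\big]-\tilde J(t,\mu;{\bm \pi})=\E^e\Big[\int_t^T e^{-\beta(s-t)}\Big(\frac{\partial \tilde J}{\partial t}(s,\mu_s';{\bm \pi})+\mathscr{H}(s,\mu_s',{\bm \pi}';{\bm \pi})-\beta\tilde J(s,\mu_s';{\bm \pi})\Big)ds\Big].
\end{align*}

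Next we use Lemma \ref{lemma:regularityJ}, i.e.\ \eqref{equ:dynamic-programming-equation} evaluated at $(s,\mu_s')$:
\begin{align*}
\frac{\partial \tilde J}{\partial t}(s,\mu_s';{\bm \pi})-\beta\tilde J(s,\mu_s';{\bm \pi}) = -\mathscr{H}^\gamma(s,\mu_s',{\bm \pi};{\bm \pi}).
\end{align*}
Substituting and adding $\gamma\Ec(s,\mu_s',{\bm \pi}')$ to both sides, the integrand becomes
\begin{align*}
\mathscr{H}^\gamma(s,\mu_s',{\bm \pi}';{\bm \pi})-\mathscr{H}^\gamma(s,\mu_s',{\bm \pi};{\bm \pi}) - \gamma\Ec(s,\mu_s',{\bm \pi}') + \hat r_{{\bm \pi}'}(s,\mu_s')
\end{align*}
after regrouping. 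Indeed, $\mathscr{H}(s,\mu,{\bm \pi}';{\bm \pi})$ contains $\hat r_{{\bm \pi}'}$ plus the generator terms. Rearranging gives
\begin{align*}
\tilde J(t,\mu;{\bm \pi}')-\tilde J(t,\mu;{\bm \pi})=\E^e\Big[\int_t^T e^{-\beta(s-t)}\big(\mathscr{H}^\gamma(s,\mu_s',{\bm \pi}';{\bm \pi})-\mathscr{H}^\gamma(s,\mu_s',{\bm \pi};{\bm \pi})\big)ds\Big].
\end{align*}

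The main obstacle is that the hypothesis is stated at $\mu$, whereas the integrand is evaluated along the random flow $\mu_s'$. We therefore read the hypothesis, as is implicit in the statement, as holding for all $(s,\mu)\in[t,T]\times\Pc_2(\R^d)$. This is the case for ${\bm \pi}'=\mathcal I({\bm \pi})$ from Theorem \ref{lemma:first-order}, which is the pointwise maximizer for every $(s,\mu)$. Under this reading the integrand is nonnegative almost surely, so the right-hand side is $\ge 0$. The remaining technical checks are routine: integrability of all terms, using Definition \ref{def:pi}(iii)--(iv) and the quadratic growth bounds, and the fact that ${\bm \pi}'\in\Pi$ ensures well-posedness of the ${\bm \pi}'$-flow. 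Together with $J=\tilde J$ from Proposition \ref{prop:approximation}, this yields $J(t,\mu;{\bm \pi}')\ge J(t,\mu;{\bm \pi})$.
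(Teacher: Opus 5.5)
Your argument is correct and follows the paper's proof: you apply It\^o's formula to $e^{-\beta(s-t)}\tilde J(s,\mu_s^{{\bm \pi}'};{\bm \pi})$ along the ${\bm \pi}'$-flow, invoke the dynamic programming equation \eqref{equ:dynamic-programming-equation} for $\tilde J(\cdot,\cdot;{\bm \pi})$, and read the hypothesis as holding for all $(s,\mu)\in[t,T]\times\Pc_2(\R^d)$, exactly as the paper does. There is one bookkeeping slip: since $H$ in \eqref{operatorH} already contains $r$, the It\^o drift is $\partial_t\tilde J-\beta\tilde J+\mathscr{H}(s,\mu_s',{\bm \pi}';{\bm \pi})-\hat r_{{\bm \pi}'}(s,\mu_s')$, so your first display double-counts the reward and your intermediate ``regrouped'' integrand has the wrong sign on $\hat r_{{\bm \pi}'}$, although the final identity you reach is the correct one.
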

\begin{proof}
For two given admissible policies ${\bm \pi}, {\bm \pi}' \in \Pi$, and any $0 \leq t \leq T$,  by applying It\^o's formula in \cite{CarD2} to $e^{-\beta(s-t)}\tilde J(s, \mu_s^{{\bm \pi}'}; {\bm \pi})$ between $t$ and $T$, we get that
{\small
\begin{align}
&\E^e\biggl[e^{-\beta (T-t)} \tilde J(T, \mu_T^{{\bm \pi}'}; {\bm \pi}) - \tilde J(t, \mu_t^{{\bm \pi}'}; {\bm \pi}) + \int_{t}^T  e^{-\beta (s-t)}\Big(r(s, \tilde X_s^{{\bm \pi}'}, \mu_s^{{\bm \pi}'}, a_s^{{\bm \pi}'}) + \gamma \mathcal{E}(s, \mu_s^{{\bm \pi}'}, {\bm \pi}') \Big) ds\biggl] \nonumber\\
= & \E^e\biggl[\int_t^T e^{-\beta (s-t)} \biggl(\frac{\partial \tilde J}{\partial t}(s, \mu_s^{{\bm \pi}'}; {\bm \pi}) - \beta \tilde J(s, \mu_s^{{\bm \pi}'}; {\bm \pi}) + \mathscr{H}^\gamma(s, \mu_s^{{\bm \pi}'}, {\bm \pi}'; {\bm \pi})\biggl)ds \biggl]. \nonumber
\end{align}}
Using $\mu_t^{{\bm \pi}'} = \mu$ and $\tilde J(T, \mu; {\bm \pi}) = \hat g(\mu)$, we rewrite the above equality as
\begin{align}\label{proof:policy-improvement-1}
 \tilde J(t, \mu; {\bm \pi}')-  \tilde J(t, \mu; {\bm \pi}) =  \E^e\biggl[\int_t^T e^{-\beta (s -t)} \biggl(\frac{\partial \tilde J}{\partial t}(s, \mu_s^{{\bm \pi}'}; {\bm \pi}) - \beta \tilde J(s, \mu_s^{{\bm \pi}'}; {\bm \pi}) + \mathscr{H}^\gamma(s, \mu_s^{{\bm \pi}'}, {\bm \pi}'; {\bm \pi})\biggl)ds\biggl].
\end{align}
Therefore, for any $(s, \mu) \in [t, T] \times \Pc_2(\R^d)$, $\mathscr{H}^\gamma(s, \mu, {\bm \pi}'; {\bm \pi}) \geq \mathscr{H}^\gamma(s, \mu, {\bm \pi}; {\bm \pi})$, we obtain that
\begin{align*}
& \tilde J(t, \mu; {\bm \pi}')-  \tilde J(t, \mu; {\bm \pi})\\
= & \E^e\biggl[\int_t^T e^{-\beta (s-t)} \biggl(\frac{\partial \tilde J}{\partial t}(s, \mu_s^{{\bm \pi}'}; {\bm \pi}) - \beta \tilde J(s, \mu_s^{{\bm \pi}'}; {\bm \pi}) + \mathscr{H}^\gamma(s, \mu_s^{{\bm \pi}'}, {\bm \pi}'; {\bm \pi})\biggl)ds\biggl] \nonumber\\
\geq & \E^e\biggl[\int_t^T e^{-\beta (s-t)} \biggl(\frac{\partial \tilde J}{\partial t}(s, \mu_s^{{\bm \pi}'}; {\bm \pi}) - \beta \tilde J(s, \mu_s^{{\bm \pi}'}; {\bm \pi}) + \mathscr{H}^\gamma(s, \mu_s^{{\bm \pi}'}, {\bm \pi}; {\bm \pi})\biggl)ds\biggl]= 0,
\end{align*}
where the last equality holds because of the dynamic programming equation \eqref{equ:dynamic-programming-equation}.
\end{proof}

\begin{Corollary}\label{cor:policy-improvement}
For a given ${\bm \pi} \in \Pi$, define ${\bm \pi}' = \mathcal{I}({\bm \pi})$, with $\mathcal{I}$ given in Theorem \ref{lemma:first-order}.
Then $J(t, \mu; {\bm \pi}') \geq J(t, \mu; {\bm \pi})$.
Conversely, if there exists some $\hat {\bm \pi} \in \Pi$ such that $J(t, \mu; \hat {\bm \pi}') = J(t, \mu; \hat {\bm \pi})$ for any $(t, \mu) \in [0, T] \times \Pc_2(\R^d)$, with $\hat {\bm \pi}' = \Ic(\hat {\bm \pi})$, then $\hat {\bm \pi}$ is an optimal policy of \eqref{equ:optimal_value_function}.
\end{Corollary}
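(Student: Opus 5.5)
The first claim follows directly from Theorem \ref{lemma:first-order} and Theorem \ref{thm:policy_improvement}. Fix $s \in [t,T]$ and $\mu \in \Pc_2(\R^d)$. By Theorem \ref{lemma:first-order}, ${\bm \pi}' = \Ic({\bm \pi})$ is, for each $(s,\mu)$, the unique maximizer of ${\bm h} \mapsto \mathscr{H}^\gamma(s,\mu,{\bm h};{\bm \pi})$ over $\Pc_{ac}(\Ac|\R^d)$. Since ${\bm \pi}$ is itself an admissible candidate, we get $\mathscr{H}^\gamma(s,\mu,{\bm \pi}';{\bm \pi}) \ge \mathscr{H}^\gamma(s,\mu,{\bm \pi};{\bm \pi})$ for every $s$ and $\mu$. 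Theorem \ref{thm:policy_improvement} then gives $\tilde J(t,\mu;{\bm \pi}') \ge \tilde J(t,\mu;{\bm \pi})$. Proposition \ref{prop:approximation} identifies $J$ with $\tilde J$, so the inequality holds for $J$. One technical point needs checking: ${\bm \pi}'$ must lie in $\Pi$, so that Theorem \ref{thm:policy_improvement} applies. I would take this from the statement of Theorem \ref{lemma:first-order}, which asserts ${\bm h}^* \in \Pi$, together with joint measurability of $(s,x,\mu)\mapsto \Ic({\bm \pi})$, which I would assume as part of the standing hypotheses.

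For the converse, suppose $J(\cdot,\cdot;\hat{\bm \pi}') = J(\cdot,\cdot;\hat{\bm \pi})$ on $[0,T]\times\Pc_2(\R^d)$. The plan is to show that $V := \tilde J(\cdot,\cdot;\hat{\bm \pi})$ solves the HJB equation \eqref{HJB}, and then run a verification argument. Apply Lemma \ref{lemma:regularityJ} to both $\hat{\bm \pi}$ and $\hat{\bm \pi}'$. Since the two value functions coincide, their derivatives in $t$ and $\mu$ coincide as well. Subtracting the two PDEs \eqref{equ:dynamic-programming-equation} gives $\mathscr{H}^\gamma(t,\mu,\hat{\bm \pi}';\hat{\bm \pi}) = \mathscr{H}^\gamma(t,\mu,\hat{\bm \pi};\hat{\bm \pi})$. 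Here one must note that $\mathscr{H}(\cdot;\hat{\bm \pi}')$ is built from derivatives of $J(\cdot;\hat{\bm \pi}')$, and these equal those of $J(\cdot;\hat{\bm \pi})$. Now $\hat{\bm \pi}'$ is the unique maximizer of $\mathscr{H}^\gamma(t,\mu,\cdot;\hat{\bm \pi})$, by the strict concavity used in Theorem \ref{lemma:first-order}. Hence $\hat{\bm \pi}$ is also a maximizer, and $\hat{\bm \pi} = \hat{\bm \pi}'$. Consequently \eqref{equ:dynamic-programming-equation} for $\hat{\bm \pi}$ reads
\[
\frac{\partial V}{\partial t}(t,\mu) + \sup_{{\bm h}} \mathscr{H}^\gamma(t,\mu,{\bm h};\hat{\bm \pi}) - \beta V(t,\mu) = 0,
\]
which is exactly \eqref{HJB} with $V$ in place of $\tilde J^*$.

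For the verification step, take any ${\bm \pi}\in\Pi$ and repeat the It\^o computation from the proof of Theorem \ref{thm:policy_improvement}, applied to $e^{-\beta(s-t)}V(s,\mu_s^{{\bm \pi}})$. This yields
\[
\tilde J(t,\mu;{\bm \pi}) - V(t,\mu) = \E^e\Big[\int_t^T e^{-\beta(s-t)}\Big(\partial_t V - \beta V + \mathscr{H}^\gamma(s,\mu_s^{\bm \pi},{\bm \pi};\hat{\bm \pi})\Big)ds\Big].
\]
The integrand is nonpositive, because $\mathscr{H}^\gamma(s,\mu_s^{\bm \pi},{\bm \pi};\hat{\bm \pi}) \le \sup_{\bm h}\mathscr{H}^\gamma(s,\mu_s^{\bm \pi},{\bm h};\hat{\bm \pi})$ and the HJB equation above holds. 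Therefore $\tilde J(t,\mu;{\bm \pi}) \le \tilde J(t,\mu;\hat{\bm \pi})$ for every ${\bm \pi}\in\Pi$. By Proposition \ref{prop:approximation} this transfers to $J$, so $\hat{\bm \pi}$ is optimal for \eqref{equ:optimal_value_function}.

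The main obstacle is the first step of the converse, namely passing from equality of value functions to equality of the Hamiltonian terms. This needs $J(\cdot,\cdot;\hat{\bm \pi}')\in\Cc^{1,2}$ so that Lemma \ref{lemma:regularityJ} applies to $\hat{\bm \pi}'$; I would assume this, as the paper does. It also needs the maximizer characterization of Theorem \ref{lemma:first-order} to hold at every $(t,\mu)$ visited by $\mu_s^{\bm \pi}$, including measures not in $\Pc_{2+\delta}$. Where needed, I would handle the latter by density of $\Pc_{2+\delta}$ and continuity of $\mathscr{H}^\gamma$ in $\mu$. Alternatively, the equality $\hat{\bm \pi}=\hat{\bm \pi}'$ is not actually required: the maximizer property alone gives the HJB equation directly.
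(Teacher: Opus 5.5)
Your proof is correct. Its skeleton matches the paper's: the first claim follows from Theorem \ref{thm:policy_improvement}, and the converse goes through the identity $\mathscr{H}^\gamma(t,\mu,\hat{\bm \pi}';\hat{\bm \pi})=\mathscr{H}^\gamma(t,\mu,\hat{\bm \pi};\hat{\bm \pi})$, then uniqueness of the maximizer, then verification. You obtain the key identity by a different route.

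The paper applies \eqref{proof:policy-improvement-1} along the flow $\mu^{\hat{\bm \pi}'}$ on $[t,T]$ and again on $[t+h,T]$, restarting at the random measure $\mu^{\hat{\bm \pi}'}_{t+h}$. It then subtracts, divides by $h$ and lets $h\to 0$. You instead write Lemma \ref{lemma:regularityJ} for both $\hat{\bm \pi}$ and $\hat{\bm \pi}'$ and subtract the two PDEs, using that $\mathscr{H}(\cdot;\bm\pi)$ depends on $\bm\pi$ only through the derivatives of $\tilde J(\cdot;\bm\pi)$. This is shorter and avoids the restart at a random initial measure. In substance the two agree: the paper's localization is just a re-derivation of the Feynman--Kac PDE for $\hat{\bm \pi}'$, with $\tilde J(\cdot;\hat{\bm \pi}')$ replaced by $\tilde J(\cdot;\hat{\bm \pi})$.

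The regularity you say you would assume is in fact free. The hypothesis together with Proposition \ref{prop:approximation} gives $\tilde J(\cdot;\hat{\bm \pi}')=\tilde J(\cdot;\hat{\bm \pi})$ on all of $[0,T]\times\Pc_2(\R^d)$, so $\tilde J(\cdot;\hat{\bm \pi}')\in\Cc^{1,2}$ automatically. What you genuinely need is $\hat{\bm \pi}'\in\Pi$, so that the flow and It\^o argument behind Lemma \ref{lemma:regularityJ} runs for $\hat{\bm \pi}'$; that comes from Theorem \ref{lemma:first-order}.

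Your explicit verification step, \eqref{proof:policy-improvement-1} with $\hat{\bm \pi}$ as the reference policy and an arbitrary $\bm\pi\in\Pi$ as the new one, supplies what the paper only calls a standard verification argument. You also make explicit the passage between $J$ and $\tilde J$, which the paper leaves implicit. Your remark that only the maximizer property, not uniqueness, is needed for optimality is correct. Uniqueness is used only for the additional conclusion $\Ic(\hat{\bm \pi})=\hat{\bm \pi}$.
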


\begin{proof}
If we take ${\bm \pi}' = \mathcal{I}({\bm \pi})$, then for any $(t, \mu) \in [0, T] \times \Pc_2(\R^d)$, it holds that $\mathscr{H}^\gamma(s, \mu, {\bm \pi}'; {\bm \pi}) \geq \mathscr{H}^\gamma(s, \mu, {\bm \pi}; {\bm \pi})$. By Theorem \ref{thm:policy_improvement}, we deduce that  $\tilde J(t, \mu; {\bm \pi}')\geq \tilde J(t, \mu; {\bm \pi})$.

We next prove the second claim. By \eqref{proof:policy-improvement-1} and $\tilde J(t, \mu; \hat {\bm \pi}) = \tilde J(t, \mu; \hat {\bm \pi}')$, we get that for any $(t, \mu) \in [0, T] \times \Pc_2(\R^d)$,
{\small
\begin{align}
\E^e\biggl[\int_t^T e^{-\beta (s -t)} \biggl(\frac{\partial \tilde J}{\partial t}(s, \mu_s^{{\bm \pi}'}; \hat {\bm \pi}) - \beta \tilde J(s, \mu_s^{{\bm \pi}'}; {\bm \pi}) + \mathscr{H}^\gamma(s, \mu_s^{{\bm \pi}'}, \hat {\bm \pi}'; \hat {\bm \pi})\biggl)ds\biggl] =0. \label{proof:policy-improvement-2}
\end{align}}Similarly, in view of \eqref{proof:policy-improvement-1} and $\tilde J(t + h,  \mu_{t + h}^{\hat{\bm \pi}'}; \hat {\bm \pi}) = \tilde J(t + h, \mu_{t + h}^{\hat{\bm \pi}'}; \mathcal{I}(\hat{\bm \pi}))$, $\P^0$-a.s. for any $h \in [0, T-t)$, we have that
{\small
\begin{align}
\E^e\biggl[\int_{t + h}^T e^{-\beta (s -t)} \biggl(\frac{\partial \tilde J}{\partial t}(s, \mu_s^{\hat {\bm \pi}'}; \hat{\bm \pi}) - \beta J(s, \mu_s^{\hat {\bm \pi}'}; \hat{\bm \pi}) + \mathscr{H}^\gamma(s, \mu_s^{\hat {\bm \pi}'}, \hat {\bm \pi}'; \hat {\bm \pi}) \biggl)ds\biggl] = 0. \label{proof:policy-improvement-3}
\end{align}}Subtracting \eqref{proof:policy-improvement-3} from \eqref{proof:policy-improvement-2} and dividing $h$ on both sides, we get that
{\small
\begin{align*}
 \frac{1}{h} \E^e\biggl[\int_t^{t + h} e^{-\beta (s -t)} \biggl(\frac{\partial J}{\partial t}(s, \mu_s^{\hat {\bm \pi}'}; \hat {\bm \pi}) - \beta J(s, \mu_s^{\hat {\bm \pi}'}; \hat {\bm \pi}) + \mathscr{H}^\gamma(s, \mu_s^{\hat {\bm \pi}'}, \hat {\bm \pi}'; \hat {\bm \pi})\biggl)ds\biggl]
= 0.
\end{align*}}
By the continuity of $b, \sigma$, $r$, $\tilde J$ and $\mu_s^{\hat {\bm \pi}'}$, it holds by sending $h \to 0$ that
\begin{align}
\frac{\partial \tilde J}{\partial t}(t, \mu; \hat {\bm \pi}) - \beta \tilde J(t, \mu; \hat {\bm \pi}) + \mathscr{H}^\gamma(t, \mu, \hat {\bm \pi}'; \hat {\bm \pi})=0.
\end{align}
As $\tilde J(t, \mu; \hat {\bm \pi})$ satisfies the equation \eqref{equ:dynamic-programming-equation},
we arrive at  $\mathscr{H}^\gamma(t, \mu, \hat {\bm \pi}'; \hat {\bm \pi})= \mathscr{H}^\gamma(t, \mu, \hat {\bm \pi}; \hat {\bm \pi})$.
Recall from Proposition \ref{lemma:first-order} that $\hat {\bm \pi}' = \mathcal{I}(\hat {\bm \pi})$ is the unique maximizer of $\mathscr{H}^\gamma(t, \mu, {\bm h}; \hat {\bm \pi})$, then we have $\mathcal{I}(\hat {\bm \pi}) = \hat {\bm \pi}$. By a standard verification argument for the entropy regularized MFC problem, it holds that $J(t, \mu, \hat {\bm \pi}) = J^*(t, \mu)$ and hence $\hat {\bm \pi}$ is an optimal policy.
\end{proof}

\section{Continuous-Time Integrated q-Function}\label{sec:q-function}

We investigate in this section the proper definition of the continuous-time integrated q-function (Iq-function), which lays the theoretical foundation of q-learning theory for MFC problems. 

Similar to \cite{weiyu2023}, let us consider a ``perturbed policy" $\bar {\bm \pi} \in \Pi$, which takes ${\bm h} \in \Pc_{ac}(\Ac|\R^d) $ on $[t, t+ \Delta t)$, and then ${\bm \pi} \in \Pi$ on $[t+\Delta t, T)$. Then $X_s^{\Dc, \bar{\bm \pi}}$ on $[t, T)$ is governed by
\begin{align*}
 dX_s^{\Dc, \bar {\bm \pi}} &= b(s, X_s^{\Dc, \bar {\bm \pi}},  \mu_s^{\Dc, \bar {\bm \pi}},  {a}_{\delta(s)}^{\bm h})ds + \sigma(s, X_s^{t, \xi, \bar {\bm \pi}}, \mu_s^{\Dc, \bar {\bm \pi}}, {a}_{\delta(s)}^{\bm h}) dW_s, \; s \in [t, t+\Delta t), \\
 & \;\;\;+ \sigma_{o}(s, X_s^{t, \xi, \bar {\bm \pi}}, \mu_s^{\Dc, \bar {\bm \pi}}, {a}_{\delta(s)}^{\bm h}) dB_s,\; X_t^{\Dc, \bar{\bm \pi}} = \xi,\\
 dX_s^{\Dc, \bar {\bm \pi}} &= b(s, X_s^{\Dc, \bar {\bm \pi}},\mu_s^{\Dc, \bar {\bm \pi}}, {a}_{\delta(s)}^{\bm \pi})ds + \sigma(s, X_s^{t, \xi, \bar {\bm \pi}}, \mu_s^{\Dc, \bar {\bm \pi}}, {a}_{\delta(s)}^{\bm \pi}) dW_s, \; s \in [t+\Delta t, T), \\
 & \;\;\; + \sigma_{o}(s, X_s^{t, \xi, \bar {\bm \pi}},  \mu_s^{\Dc, \bar {\bm \pi}}, {a}_{\delta(s)}^{\bm \pi}) dB_s, \; X_{t+ \Delta t}^{\Dc, \bar {\bm \pi}} = X_{t+\Delta t}^{\Dc, {\bm h}}.
\end{align*}
We first consider the discrete time IQ-function defined on $[0, T] \times L^2(\Omega; \R^d) \times \Pc_{ac}(\Ac|\R^d)$ {\it independent of discretely sampling},  with the fixed time interval $\Delta t$ and the entropy regularizer that
\begin{align}
& Q_{\Delta t} (t, \xi, {\bm h}; {\bm \pi}) = :\lim_{|\Dc| \to 0} Q_{\Delta t}^{\Dc}(t, \xi, {\bm h}; {\bm \pi})\nonumber\\
=& \lim_{|\Dc| \to 0} \E^e\biggl[\int_t^{t + \Delta t} e^{-\beta(s-t)}\Big(r(s,  X_s^{\Dc, \bar {\bm \pi}},  \mu_s^{\Dc, \bar {\bm \pi}}, a^{\bm h}_{\delta(s)}) +\gamma E_{\bm h}(\delta(s), X_{\Dc, \delta(s)}^{\bar{\bm \pi}}, \mu_s^{\Dc, \bar{\bm \pi}})\Big)ds\nonumber\\
& \;+ \int_{t + \Delta t}^T  e^{-\beta(s-t)} \Big(r(s,  X_s^{\Dc, \bar {\bm \pi}}, \mu_s^{\Dc, \bar{\bm \pi}}, a^{\bm \pi}_{\delta(s)}) + \gamma E_{\bm \pi}(\delta(s), X_{\Dc, \delta(s)}^{\bar{\bm \pi}}, \mu_s^{\Dc, \bar{\bm \pi}})\Big) ds \nonumber\\
& \;+ e^{-\beta(T-t)} g(X_T^{\Dc, \bar {\bm \pi}},  \mu_T^{\Dc, \bar {\bm \pi}}) \Big| X_t^{\Dc, \bar{\bm \pi}} = \xi\biggl]. \nonumber
\end{align}
By noting that $Q_{\Delta t}^{\Dc}(t, \xi, {\bm h}; {\bm \pi}) = J^{\Dc}(t, \xi; \bar{\bm \pi})$ and the equivalence result in Proposition \ref{prop:approximation}, we have that
\begin{align*}
Q_{\Delta t} (t, \xi, {\bm h}; {\bm \pi}) = \lim_{|\Delta \Dc| \to 0} J^{\Dc}(t, \xi; \bar{\bm \pi}) = \tilde J(t, \mu; \bar{\bm \pi}).
\end{align*}
 Consequently, we can rewrite $Q_{\Delta t}$ in terms of the relaxed control formulation
\begin{align}
 Q_{\Delta}(t, \xi, {\bm h}; {\bm \pi}) &= \E^e\biggl[\int_t^{t + \Delta t} e^{-\beta(s-t)}\Big(\hat r_{\bm h}(s, \mu_s^{\bar{\bm \pi}})+\gamma \mathcal{E}(s, \mu_s^{\bar{\bm \pi}}, {\bm h})\Big)ds\nonumber\\
& \;+ \int_{t + \Delta t}^T  e^{-\beta(s-t)} \Big(\hat r_{\bm \pi}(s, \mu_s^{\bar{\bm \pi}}) + \gamma \mathcal{E}(s, \mu_s^{\bar{\bm \pi}}, {\bm \pi})\Big) ds + e^{-\beta(T-t)} \hat g(\mu_T^{\bar{\bm \pi}})\biggl]. \nonumber
\end{align}
Noting the collapse of  IQ-function to the value function as $\Delta t \to 0$, we instead consider the first order derivative of $Q_{\Delta t}$ by using the flow property of $\mu_s^{\bar{\bm \pi}}$ and
applying It\^o's formula (see Theorem 4.14 in \cite{CarD2}) to $e^{-\beta  s} \tilde J(s, \mu_s^{{\bm h}}; {\bm \pi})$ between $t$ and $t + \Delta t$ that
 \begin{align}
  Q_{\Delta t} (t, \xi, {\bm h}; {\bm \pi}) =&  \tilde J(t, \mu; {\bm \pi}) + 
 \E^e\biggl[\int_t^{t + \Delta t} e^{-\beta(s-t)}\Big(\frac{\partial \tilde J}{\partial t} (s, \mu_s^{\bm h}; {\bm \pi}) - \beta \tilde J(s, \mu_s^{\bm h}; {\bm \pi}) +\mathscr{H}^\gamma(s, \mu_s^{\bm h}, {\bm h}; {\bm \pi})\Big)ds \biggl], \nonumber
 \end{align}
 where $\mathscr{H}^{\gamma}$ is defined in \eqref{equ:F}.
 By the continuity of $\tilde J$ and $\mu_s^{\bm h}$ with respect to $s$, it holds that
\begin{align}
  Q_{\Delta t} (t, \xi, {\bm h}; {\bm \pi}) \approx & \tilde J(t, \mu; {\bm \pi}) + \Delta t \Big( \frac{\partial \tilde J}{\partial t}(t, \mu; {\bm \pi}) - \beta \tilde J(t, \mu; {\bm \pi})  + \mathscr{H}^\gamma(t, \mu, {\bm h}; {\bm \pi})\Big) + o(\Delta t). \label{equ:def_Q}
\end{align}
This leads to the next definition of continuous-time Iq-function.

\begin{Definition}\label{def:coupled-q-function}
Given a  policy ${\bm \pi} \in \Pi$, for any $(t, \mu, {\bm h}) \in [0, T] \times \Pc_2(\R^d) \times \Pc_{ac}(\Ac|\R^d)$, we define the continuous-time integrated q-function (Iq-function) by
\begin{align*}
q^{\gamma}(t, \mu, {\bm h}; {\bm \pi}) & := \lim_{\Delta t \to 0} \frac{ Q_{\Delta t} (t, \xi, {\bm h}; {\bm \pi}) - \tilde J(t, \mu; {\bm \pi})}{\Delta t} =  \frac{\partial \tilde J}{\partial t}(t, \mu; {\bm \pi}) - \beta \tilde J(t, \mu; {\bm \pi})  + \mathscr{H}^\gamma(t, \mu, {\bm h}; {\bm \pi}).
\end{align*}
We also call $q^0(t, \mu, {\bm h}; {\bm \pi}) = q^\gamma(t, \mu, {\bm h}; {\bm \pi}) - \gamma \mathcal{E}(t, \mu, {\bm h})$ the unregularized Iq-function.
\end{Definition}

It is straightforward to see that $q^{\gamma}$ (resp. $q^0$) equals to $\mathscr{H}^\gamma$ in \eqref{equ:regularized_minimization} (resp. $\mathscr{H}$ in
 \eqref{equ:F}) compensated by the dispersion term $\frac{\partial \tilde J}{\partial t}(t, \mu; {\bm \pi}) - \beta \tilde J(t, \mu; {\bm \pi})$. Therefore,
 we obtain the following corollary, which is an immediate consequence of Theorem \ref{lemma:first-order}.
 \begin{Corollary}\label{corollary:optimal-policy}
 Let Assumptions \ref{ass:b-sigma-sigmao} and \ref{assum:2nd-mu-derivative-J} hold.  Given ${\bm \pi} \in \Pi$ and $(t, \mu) \in [0, T] \times \Pc_{2 + \delta}(\R^d)$ for some $\delta >0$, there exists a unique maximizer to
$${\bm h}^* = \argmax_{{\bm h} \in \Pc_{ac}(\Ac|\R^d)} q^{\gamma}(t, \mu, {\bm h}; {\bm \pi})$$
if and only if
 \begin{align*}
{\bm h}^*(a|t, x, \mu) = \Phi_{\bm \pi}({\bm h}^*)= \frac{\exp\Big\{\frac{1}{\gamma} \frac{\delta q^0}{\delta {\bm h}}(t, \mu, {\bm h}^*; {\bm \pi})(x, a)\Big\}}{\int_{\Ac} \exp\Big\{\frac{1}{\gamma} \frac{\delta q^0}{\delta {\bm h}}(t, \mu, {\bm h}^*; {\bm \pi})(x, a)\Big\}da}.
\end{align*}
Furthermore, if there exists some ${\bm \pi}^* \in \Pi$ satisfying the two-layer fixed point to
$${\bm \pi}^* = \argmax_{{\bm h} \in \Pc_{ac}(\Ac|\R^d)} q^{\gamma,*}(t, \mu, {\bm h})$$
or equivalently the two-layer fixed point to
\begin{align}\label{twofix}
{\bm \pi}^*(a|t, x, \mu) = \frac{\exp\Big\{\frac{1}{\gamma} \frac{\delta q^{0, *}}{\delta {\bm h}}(t, \mu, {\bm \pi}^*)(x, a)\Big\}}{\int_{\Ac} \exp\Big\{\frac{1}{\gamma} \frac{\delta q^{0, *}}{\delta {\bm h}}(t, \mu, {\bm \pi}^*)(x, a)\Big\}da},
\end{align}
where $q^{\gamma, *}(t, \mu, {\bm h}): = q^{\gamma}(t, \mu, {\bm h}; {\bm \pi}^*)$ and  $q^{0, *}(t, \mu, {\bm h}): = q^{0}(t, \mu, {\bm h}; {\bm \pi}^*)$,
then ${\bm \pi}^*$ is an optimal policy.
 \end{Corollary}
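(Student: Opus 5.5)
The plan is to reduce both assertions of Corollary \ref{corollary:optimal-policy} to Theorem \ref{lemma:first-order} and Corollary \ref{cor:policy-improvement}, using the structure of the Iq-function in Definition \ref{def:coupled-q-function}.

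For fixed $(t,\mu)$ and ${\bm \pi}$, Definition \ref{def:coupled-q-function} gives
\begin{align*}
q^\gamma(t,\mu,{\bm h};{\bm \pi}) = \mathscr{H}^\gamma(t,\mu,{\bm h};{\bm \pi}) + c(t,\mu;{\bm \pi}), \qquad c(t,\mu;{\bm \pi}) := \frac{\partial \tilde J}{\partial t}(t,\mu;{\bm \pi}) - \beta \tilde J(t,\mu;{\bm \pi}).
\end{align*}
The term $c$ does not depend on ${\bm h}$. Hence $q^\gamma(t,\mu,\cdot;{\bm \pi})$ and $\mathscr{H}^\gamma(t,\mu,\cdot;{\bm \pi})$ have the same maximizers over $\Pc_{ac}(\Ac|\R^d)$. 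In the same way, $q^0(t,\mu,\cdot;{\bm \pi}) = \mathscr{H}(t,\mu,\cdot;{\bm \pi}) + c(t,\mu;{\bm \pi})$.

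I then compare partial linear functional derivatives. Definition \ref{def:linear-derivative} involves only increments $G({\bm h}')-G({\bm h})$, and the constant $c$ cancels in these. So $\frac{\delta \mathscr{H}}{\delta {\bm h}}$ from \eqref{equ:derivativeF} is a valid choice of $\frac{\delta q^0}{\delta {\bm h}}$, and any other choice differs by a function $\kappa(x,\mu)$. Such a $\kappa$ cancels in the normalized Gibbs ratio in \eqref{equ:mapI}. Substituting into Theorem \ref{lemma:first-order}, which holds under Assumptions \ref{ass:b-sigma-sigmao} and \ref{assum:2nd-mu-derivative-J} for $\mu\in\Pc_{2+\delta}(\R^d)$, gives the first statement: a unique argmax ${\bm h}^*$ exists if and only if ${\bm h}^* = \Phi_{\bm \pi}({\bm h}^*)$ with $\frac{\delta q^0}{\delta {\bm h}}$ in place of $\frac{\delta \mathscr{H}}{\delta {\bm h}}$.

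For the second statement, suppose ${\bm \pi}^*$ satisfies ${\bm \pi}^* = \argmax_{{\bm h}} q^{\gamma,*}(t,\mu,{\bm h})$ for all $(t,\mu)$. By the first part this is the same as ${\bm \pi}^* = \Ic({\bm \pi}^*)$, and also the same as the fixed-point relation \eqref{twofix}. It follows trivially that $J(t,\mu;\Ic({\bm \pi}^*)) = J(t,\mu;{\bm \pi}^*)$ for every $(t,\mu)$. The converse part of Corollary \ref{cor:policy-improvement} then shows that ${\bm \pi}^*$ is optimal.

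If one wants to avoid quoting that corollary, there is a direct route. Since ${\bm \pi}^*$ maximizes $\mathscr{H}^\gamma(t,\mu,\cdot;{\bm \pi}^*)$, equation \eqref{equ:dynamic-programming-equation} for $\tilde J(\cdot,\cdot;{\bm \pi}^*)$ is exactly the HJB equation \eqref{HJB}. Then \eqref{proof:policy-improvement-1}, applied with ${\bm \pi}={\bm \pi}^*$ and an arbitrary ${\bm \pi}'$, gives $\tilde J(t,\mu;{\bm \pi}') \le \tilde J(t,\mu;{\bm \pi}^*)$. Proposition \ref{prop:approximation} transfers this inequality to $J$.

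The main obstacle I expect is a bookkeeping issue with the two-layer structure. The inner fixed point of $\Phi_{{\bm \pi}^*}$ must hold pointwise in $(t,\mu)$ and along the flow $\mu_s^{{\bm \pi}'}$. The uniqueness in Theorem \ref{lemma:first-order} is stated only for $\mu\in\Pc_{2+\delta}(\R^d)$. So I need to check that the conditional flows stay in $\Pc_{2+\delta}$, using Definition \ref{def:pi}(iii) and standard moment estimates, or else simply assume the fixed point holds on all of $[0,T]\times\Pc_2(\R^d)$, as the statement implicitly does.
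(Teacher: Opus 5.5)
Your proposal is correct and follows the paper's own route. The paper also observes that $q^\gamma$ and $q^0$ differ from $\mathscr{H}^\gamma$ and $\mathscr{H}$ only by the ${\bm h}$-independent term $\frac{\partial \tilde J}{\partial t}-\beta\tilde J$, reads the corollary off from Theorem \ref{lemma:first-order}, and gets optimality of a fixed point of $\Ic$ from Corollary \ref{cor:policy-improvement}; your direct argument via \eqref{proof:policy-improvement-1} just writes out the ``standard verification argument'' that the paper leaves implicit at the end of that corollary's proof.
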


\begin{Remark}
\begin{figure}[h]
\[
\xymatrix{
\bm{\pi}^0\ar[r]^-{\mathcal{I}} & \cdots\ar[r]^-{\mathcal{I}} & \bm{\pi}^n\ar[d]_-{\Phi_{\bm{\pi}^n}}\ar[rrrr]^-{\mathcal{I}} &&&& \bm{\pi}^{n+1}\ar[r]^-{\mathcal{I}} & \cdots\ar[r]^-{\mathcal{I}} & \bm{\pi}^*\\
&&\bm{\pi}^{n,1}\ar[r]^-{\Phi_{\bm{\pi}^n}} & \cdots\ar[r]^-{\Phi_{\bm{\pi}^n}} & \bm{\pi}^{n,\ell}\ar[r]^-{\Phi_{\bm{\pi}^n}} & \bm{\pi}^{n,\ell+1}\ar[r]^-{\Phi_{\bm{\pi}^n}} & \cdots\ar[u]_-{\Phi_{\bm{\pi}^n}}
}
\]
\caption{Illustration of two-layer fixed point \eqref{twofix}}\label{figure:twofix}
\end{figure}
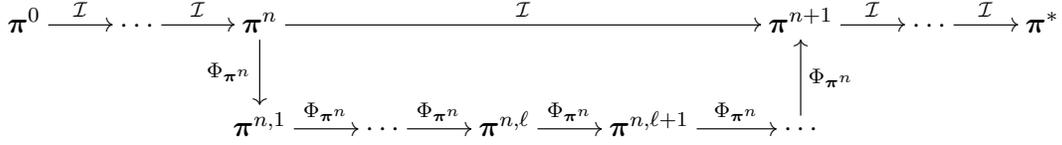

We could search for the optimal policy by considering \eqref{twofix} as a two-layer fixed point problem. Specifically, starting with an initial policy ${\bm \pi}^0$, at each iteration  $n \in \N$,  we derive ${\bm \pi}^{n+1}$ by looking for the fixed point of the map $\Phi_{{\bm \pi }^n}$, which is the inner layer of the two-layer fixed point problem. Recall that $\Ic$ is a map from ${\bm \pi}^n$ to ${\bm \pi}^{n+1}$, that is, ${\bm \pi}^{n+1} = \Ic({\bm \pi}^n)$. And then the optimal policy ${\bm \pi}^*$ is a fixed point of the map $\Ic$, which is the outer layer of the two-layer fixed point problem. See Figure \ref{figure:twofix} for the illustration.
\end{Remark}

\section{Linear Quadratic MFC and Gaussian Optimal Policy}\label{sec:LQ}
Let us consider a controlled  linear McKean-Vlasov dynamics with $\Ac = \R^p$ and assume $n=m=1$ for simplicity. The coefficients of the dynamics are given by
\begin{align}
b(t, x, \mu, a) &= b_0(t) + B(t) x + \bar B(t) \bar\mu + C(t) a, \nonumber\\
\sigma(t, x, \mu, a) &= \vartheta(t) + D(t) x + \bar D(t) \bar\mu + F(t)a, \label{LQdynamic}\\
\sigma_o(t, x, \mu, a) &= \vartheta_o(t) + D_o(t) x + \bar D_o(t) \bar\mu + F_o(t)a. \nonumber
\end{align}
The running and terminal reward functions are given by
\begin{align}
r(t, x, \mu, a) = x\trans M(t) x + \bar\mu\trans \bar M(t) \bar\mu + a \trans R(t) a +x\trans O(t), \;\;g(x, \mu) = x \trans P x + \bar\mu\trans \bar P\bar\mu\label{LQreward}
\end{align}
Here, $b_0(t), \vartheta(t)$, $\vartheta_o(t)$ and $O(t)$ are deterministic functions of $t$ valued in $\R^d$, $B(t)$, $\bar B(t)$, $D(t)$, $\bar D(t)$, $D_o(t)$, $\bar D_o(t)$, $M(t)$ and $\bar M(t)$ are deterministic functions of $t$ valued in $\R^{d \times d}$, $C(t), F(t), F_o(t)$ are deterministic functions of $t$ valued in $\R^{d \times p}$, $R(t)$ is deterministic matrix functions of $t$ valued in $\R^{p \times p}$, and $P$ and $\bar P$ are constant matrices in $\R^{d \times d}$. We may assume without loss of generality that $M(t), \bar M(t), R(t), P, \bar P$ are symmetric matrices and $\beta =0$.

Denote the mean and variance of $\mu$ by $\bar\mu = \int_{\R^d} x \mu(dx)$ and ${\rm Var}(\mu)(\Lambda) = \int_{\R^d} (x - \bar\mu)\trans \Lambda (x -\bar\mu) \mu(dx)$, respectively, for a symmetric matrix $\Lambda \in \R^{d \times d}$. We obtain explicit expressions of the optimal value function $\tilde J^*$ and the optimal policy ${\bm \pi}^*$ in the next result.
\begin{Theorem}\label{thm:LQ}Under the condition
\begin{align*}
({\bf H}) \;\; P \preceq 0, \; P + \bar P \preceq 0, \;M(t) \preceq 0,\; M(t) + \bar M(t) \preceq 0,\; R(t) \preceq -\delta I_q,
\end{align*}
for some $\delta >0$,
the optimal value function $\tilde J^*$ takes the quadratic form
\begin{align}\label{finite-LQ-J*-formulation}
\tilde J^*(t, \mu) = {\rm Var}(\mu)(\Lambda^*(t)) + \bar\mu\trans \Gamma^*(t) \bar\mu + \bar\mu\trans \zeta^*(t) + \chi^*(t),
\end{align}
and the optimal policy is unique and satisfies the Gaussian type that
\begin{align}\label{LQ-optimal-policy}
{\bm \pi}^*(\cdot|t, x, \mu) = \Nc\Big(- (U_t^*)^{ -1} S^*_t (x - \bar\mu) -(V^*_t)^{-1} Z^*_t \bar\mu - \frac{1}{2}(V_t^*)^{-1} Y_t^*, - \frac{\gamma}{2} (U^*_t)^{-1}\Big).
\end{align}
Here, we set $U_t= U(t, \Lambda(t))$, $V_t = V(t, \Gamma(t))$, $S_t = S(t, \Lambda(t))$, $Z_t = Z(t, \Gamma(t), \Lambda(t))$, $Y_t = Y(\zeta(t), \Lambda(t),\Gamma(t))$ such that
\begin{align*}
\left\{
\begin{array}{lll}
U_t &= F\trans \Lambda(t) F +  F_o \trans \Lambda(t) F_o + R,\\
V_t & = F\trans \Lambda(t) F  + F_o \trans \Gamma(t) F_o+ R,\\
S_t & = C\trans \Lambda(t) + F\trans \Lambda(t) D + F_o\trans \Lambda(t) D_o,\\
Z_t & = C\trans\Gamma(t) + F\trans \Lambda(t)\big(D + \bar D\big) + F_o \trans \Gamma(t)\big( D_o+ \bar D_o\big),\\
Y_t & = C\trans \zeta(t) + 2F\trans \Lambda(t) \vartheta + 2F_o \trans \Gamma(t) \vartheta_0,
\end{array}
\right.
\end{align*}
and $\Lambda^*(t)$, $\Gamma^*(t)$, $\zeta^*(t)$ and $\chi^*(t)$ satisfy
\begin{align}\label{finite-ODELambda}
\left\{
\begin{array}{rcl}
 (\Lambda^*)'(t)  + M + D\trans \Lambda^*(t) D + D_o\trans \Lambda^*(t) D_o + B\trans \Lambda^*(t) + \Lambda^*(t) B - (S_t^*)\trans (U_t^*)^{-1} S_t^*=0,\\
 \Lambda^*(T) = P,
 \end{array}
\right.
\end{align}
\begin{align}\label{finite-ODEGamma}
\left\{
\begin{array}{rcl}
(\Gamma^*)'(t) + M+ \bar M + \big(D +\bar D)\trans \Lambda^*(t)\big(D + \bar D) +  (\bar D_o + D_o)\trans \Gamma^*(t)(\bar D_o + D_o)\\
  + (B + \bar B)\trans \Gamma^*(t)+ \Gamma^*(t) (B + \bar B)- (Z_t^*)\trans (V_t^*)^{-1}Z_t^* =0,\\
\Gamma^*(T) = P + \bar P.
\end{array}
\right.
\end{align}

\begin{align}\label{finite-ODEzeta}
\left\{
\begin{array}{rcl}
(\zeta^*)'(t)  +  (B + \bar B)\trans \zeta^*(t) +  2 \Gamma^*(t) b_0 + 2(D + \bar D)\trans \Lambda^*(t) \vartheta + 2(\bar D_o + D_o)\trans \Gamma^*(t)  \vartheta_o\\
-(Z_t^*)\trans (V_t^*)^{-1} Y_t^* + O=0,\\
 \zeta^*(T) =0,
\end{array}
\right.
\end{align}

\begin{align}\label{finite-ODEchi}
\left\{
\begin{array}{rcl}
 (\chi^*)'(t) + \vartheta\trans \Lambda^*(t)\vartheta +  \vartheta_o\trans \Gamma^*(t) \vartheta_o + b_0\trans \zeta^*(t) - \frac{1}{4}(Y_t^*)\trans (V_t^*)^{-1} Y_t^*\\
 + \frac{\gamma}{2}\log \Big((-\gamma\pi)^{p} {\rm det}((U_t^*)^{-1})\Big)  =0,\\
\chi^*(T) = 0.
\end{array}
\right.
\end{align}
\end{Theorem}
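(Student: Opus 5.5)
The plan is a verification argument built on the HJB equation \eqref{HJB}, with a quadratic ansatz. Set
\[
W(t,\mu)={\rm Var}(\mu)(\Lambda(t))+\bar\mu\trans\Gamma(t)\bar\mu+\bar\mu\trans\zeta(t)+\chi(t)
\]
and compute its derivatives:
\[
\partial_\mu W(t,\mu)(x)=2\Lambda(x-\bar\mu)+2\Gamma\bar\mu+\zeta,\qquad \partial_x\partial_\mu W=2\Lambda,\qquad \partial_\mu^2W(t,\mu)(x,x')=2(\Gamma-\Lambda).
\]
The cross term in \eqref{HJB} then involves only $\int\sigma_{o,{\bm h}}(t,x,\mu)\mu(dx)$, i.e.\ the mean of the action under ${\bm \nu}={\bm h}\otimes\mu$. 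Substitute the LQ coefficients \eqref{LQdynamic}--\eqref{LQreward} and write $m(x)=\int a\,{\bm h}(da|x)$, $\bar m=\int m\,d\mu$. The integrated Hamiltonian then becomes
\begin{align*}
\mathscr{H}^\gamma(t,\mu,{\bm h};W)={}&\int\!\!\int a\trans U_t a\,{\bm h}(da|x)\mu(dx)+\bar m\trans(V_t-U_t)\bar m+2\int(x-\bar\mu)\trans S_t\trans m(x)\mu(dx)\\
&+2\bar\mu\trans Z_t\trans\bar m+Y_t\trans\bar m+\gamma\Ec(t,\mu,{\bm h})+(\text{terms free of }{\bm h}).
\end{align*}
The first thing to check is that $U_t$, $V_t$, $S_t$, $Z_t$, $Y_t$ in the statement are exactly these coefficients. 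The $F_o\trans(\Gamma-\Lambda)F_o$ piece coming from $\partial_\mu^2W$ is precisely what converts $U_t$ into $V_t$ on the squared mean.

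Next I solve the inner maximization. Under (H), I first show by a comparison argument on the Riccati ODEs \eqref{finite-ODELambda}--\eqref{finite-ODEGamma} that $\Lambda^*\preceq0$ and $\Gamma^*\preceq0$, so that $U_t^*\preceq R\preceq-\delta I$ and $V_t^*\prec0$. Global existence of the Riccati solutions follows from the standard theory for concave (negative-definite) LQ problems: the solutions are monotone and bounded, and there is a Lyapunov-type a priori bound. For fixed first moments, the entropy-plus-quadratic part $\int a\trans U_ta\,{\bm h}+\gamma\Ec$ is maximized by a Gaussian with covariance $-\frac{\gamma}{2}U_t^{-1}$. This follows from the Gibbs variational principle, or equivalently from the first-order condition \eqref{equ:first-order} of Theorem \ref{lemma:first-order}. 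The reason is that $\frac{\delta\mathscr{H}}{\delta{\bm h}}$ is quadratic in $a$ with leading matrix $U_t$; the $V_t-U_t$ term only contributes a term linear in $a$, namely $2a\trans(V_t-U_t)\bar m$. So the fixed point of $\Phi_{\bm\pi}$ in \eqref{equ:mapI} must be Gaussian, with $U_t$-covariance and mean $m(x)$ solving a linear fixed-point equation.

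It remains to maximize the concave quadratic in $m(\cdot)$. I decompose $m(x)=(m(x)-\bar m)+\bar m$. The centered part gives $\int (m-\bar m)\trans U_t(m-\bar m)+2(x-\bar\mu)\trans S_t\trans(m-\bar m)$, with maximizer $-U_t^{-1}S_t(x-\bar\mu)$. The mean part gives $\bar m\trans V_t\bar m+(2Z_t\bar\mu+Y_t)\trans\bar m$, with maximizer $-V_t^{-1}Z_t\bar\mu-\frac12V_t^{-1}Y_t$. This produces \eqref{LQ-optimal-policy}, and uniqueness comes from the strict concavity of this reduced problem together with the uniqueness in Theorem \ref{lemma:first-order}; displacement concavity (Assumption \ref{assum:2nd-mu-derivative-J}(ii)) holds because $U_t,V_t\prec0$. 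Plugging the maximizer back in and collecting terms in ${\rm Var}(\mu)(\cdot)$, $\bar\mu\bar\mu\trans$, $\bar\mu$ and constants gives exactly the ODEs \eqref{finite-ODELambda}--\eqref{finite-ODEchi}. The entropy of the Gaussian yields the $\frac{\gamma}{2}\log((-\gamma\pi)^p\det U_t^{-1})$ term, and the terms $-S\trans U^{-1}S$, $-Z\trans V^{-1}Z$, $-\frac14Y\trans V^{-1}Y$ arise from completing the square.

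Finally I close the verification. With $\Lambda^*,\Gamma^*,\zeta^*,\chi^*$ solving the ODEs, $W$ is a $\Cc^{1,2}$ solution of \eqref{HJB}. For any ${\bm\pi}\in\Pi$, Itô's formula along $\mu_s^{\bm\pi}$ (as in the proof of Theorem \ref{thm:policy_improvement}) gives $W(t,\mu)\ge\tilde J(t,\mu;{\bm\pi})$, with equality for ${\bm\pi}^*$. The policy ${\bm\pi}^*$ is admissible, with suitably relaxed growth conditions for Gaussians on $\Ac=\R^p$. Since it is a fixed point of $\Ic$, Corollary \ref{cor:policy-improvement} and Corollary \ref{corollary:optimal-policy} also identify it as the two-layer fixed point. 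I expect the main obstacle to be the algebraic bookkeeping of the common-noise cross term, which splits $U$ from $V$ and $\Lambda$ from $\Gamma$, together with proving the global solvability and sign of the coupled Riccati equation for $\Lambda^*$. For the latter I would rely on monotonicity of Riccati flows under (H), comparing with the uncontrolled linear ODE.
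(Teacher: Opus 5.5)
Your proposal is correct and shares the paper's skeleton: the same quadratic ansatz and derivatives, a Gaussian maximizer of the integrated Hamiltonian, the ODEs obtained by collecting the ${\rm Var}(\mu)(\cdot)$, $\bar\mu\trans(\cdot)\bar\mu$, $\bar\mu$ and constant terms, and standard Riccati theory under (H). Where you differ is in how you obtain the Gaussian.

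The paper inserts the ansatz into the first-order condition \eqref{equ:first-order} and observes that $\gamma\log{\bm\pi}^*$ must be quadratic in $a$. It then matches coefficients and solves the implicit linear equations for $\bar K$ and $K_0$, where the $F_o\trans(\Gamma^*-\Lambda^*)F_o$ terms convert $U^*$ into $V^*$. Because this relies on Theorem \ref{lemma:first-order}, the paper must afterwards check displacement concavity (Assumption \ref{assum:2nd-mu-derivative-J}(ii)).

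You instead maximize $\mathscr{H}^\gamma$ directly. The maximum-entropy bound fixes the covariance $-\frac{\gamma}{2}U^{-1}$ regardless of the conditional means. The orthogonal split $m=(m-\bar m)+\bar m$ then decouples what remains into a $U$-block and a $V$-block.

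What your route buys:
\begin{itemize}
\item It is self-contained and gives existence, uniqueness and the optimal value of the inner problem at once. The $-S\trans U^{-1}S$, $-Z\trans V^{-1}Z$, $-\frac14 Y\trans V^{-1}Y$ and log-determinant terms come out directly.
\item It shows why classical concavity is not needed even though $V-U=F_o\trans(\Gamma-\Lambda)F_o$ may be indefinite. Consequently, your extra appeal to Theorem \ref{lemma:first-order} and displacement concavity for uniqueness is redundant.
\end{itemize}
What the paper's route buys: it displays ${\bm\pi}^*$ as a concrete instance of the two-layer fixed point of Corollary \ref{corollary:optimal-policy}, which is the purpose of that section.

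Your covariance is the right one; the paper's intermediate line $\Sigma=-\frac{\gamma}{2}U_t^*$ should read $-\frac{\gamma}{2}(U_t^*)^{-1}$. Note also that the Riccati system is triangular rather than coupled. Once $\Lambda^*\preceq0$ is known, the $\Gamma^*$-equation is a standard Riccati equation for an LQ problem with weight matrix ${\rm diag}(M+\bar M,R)+(D+\bar D,\,F)\trans\Lambda^*(D+\bar D,\,F)\preceq0$. Its global solvability and the sign $\Gamma^*\preceq0$ (hence $V^*\prec0$) therefore follow in the same way.
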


\begin{Remark} In view of \eqref{finite-ODELambda}-\eqref{finite-ODEchi}, when $\gamma$ tends to zero,  the solution $(\Lambda, \Gamma, \zeta, \chi)$ of the LQ-MFC problem with entropy regularizer reduces to that of the classical LQ-MFC problem, and the relaxed optimal policy  reduces to the optimal strict control in \cite{phamwei2017}.
\end{Remark}

\begin{proof}[Proof of Theorem \ref{thm:LQ}]
We conjecture that $J^*$ takes the following quadratic form in \eqref{finite-LQ-J*-formulation}.
 One can easily check that $J^* \in \Cc^{1, 2}([0, T] \times \Pc_2(\R^d))$ with
\begin{align*}
\frac{\partial J^*}{\partial t}(t, \mu) & = {\rm Var}(\mu)( (\Lambda^*)'(t)) + \bar\mu\trans (\Gamma^*)'(t) \bar\mu + \bar\mu\trans(\zeta^*)'(t) + (\chi^*)'(t),\\
\partial_\mu J^*(t, \mu)(x) & = 2 \Lambda^*(t) (x - \bar\mu) + 2 \Gamma^*(t) \bar \mu + \zeta^*(t),\\
\partial_x\partial_\mu J^*(t, \mu)(x) & = 2\Lambda^*(t),\; \partial_\mu^2 J^*(t, \mu)(x, x') = 2(\Gamma^*(t) - \Lambda^*(t)).
\end{align*}
For simplicity, we will suppress the time variable $t$ in the rest of the proof. Plugging \eqref{finite-LQ-J*-formulation} in the first order condition \eqref{equ:first-order}, together with \eqref{equ:derivativeF}, we have that
\begin{align}
&\big(b_0 + B x + \bar B \bar\mu + C a\big)\trans \big(2\Lambda^*(x - \bar\mu) + 2 \Gamma^* \bar\mu + \zeta\big) + \big(\vartheta + D x + \bar D \bar\mu + F a\big) \trans \Lambda^* \big(\vartheta + D x + \bar D \bar\mu + F a\big) \nonumber\\
& + \big(\vartheta_o+ D_o x + \bar D_o \bar\mu + F_o a\big)\trans \Lambda^* \big(\vartheta_o + D_o x + \bar D_o \bar\mu + F_oa\big)+ x\trans M x + \bar\mu\trans \bar M \bar\mu + a \trans R a \nonumber\\
& +\big(\vartheta_o + D_o x + \bar D_o \bar\mu + F_o a\big) \trans \int_{\R^d} \int_{\R^p} 2(\Gamma^* - \Lambda^* ) \big(\vartheta_o + D_o x' + \bar D_o \bar\mu + F_o a\big){\bm \pi}^*(a|t, x', \mu)da\mu(dx') \nonumber\\
= &\gamma \log {\bm \pi}^*(a|t, x, \mu) + \kappa(t, x,\mu). \label{equ:LQ-first-order-condition}
\end{align}
By comparing both sides of the above equality, the fixed point satisfies the form of
\begin{align*}
\log {\bm \pi}^*(a|t, x, \mu) = -\frac{1}{2}\big(a - m(t, x, \mu)\big)\trans \Sigma^{-1}(t) \big(a - m(t, x, \mu)\big) - \frac{1}{2}\log \Big((2\pi)^{p} {\rm det}(\Sigma(t))\Big),
\end{align*}
where $m(t, x, \mu) = K(t)(x -\bar\mu) + \bar K(t) \bar\mu + K_0(t)$.
Comparing the coefficients of terms $a$ and $a\trans (\cdot) a$ on both sides of the above equality, we have that
$\Sigma(t)$, $K(t)$, $\bar K(t)$ and $K_0(t)$ satisfy
\begin{align*}
\Sigma  &= -\frac{\gamma}{2} U_t^*, K = \frac{2\Sigma }{\gamma}S_t^*, \; \bar K  =  \frac{2\Sigma}{\gamma}\Big(Z_t^*+ F_o\trans (\Gamma^* -\Lambda^*) F_o \bar K\Big),\\
K_0 & = \frac{\Sigma}{\gamma}\Big(Y_t^* + 2F_o\trans(\Gamma^* - \Lambda^*)  F_0 K_0 \Big).
\end{align*}
Therefore, we verify that ${\bm \pi}^*$ is Gaussian in the form of \eqref{LQ-optimal-policy}.

After straightforward calculations,
we see that $J^*$ satisfies the dynamic programming equation \eqref{equ:dynamic-programming-equation} if and only if
\begin{align*}
&{\rm Var}(\mu)\Big((\Lambda^*)' + D\trans \Lambda^* D  + D_o\trans \Lambda^* D_o +  B\trans \Lambda^* + \Lambda^* B - \frac{\gamma}{2} K\trans \Sigma^{-1} K+ 2 (S_t^*)\trans K + M\Big)\\
& + \bar\mu\trans\Big((\Gamma^*)' + \big(D +\bar D)\trans \Lambda^*\big(D + \bar D) +  (D_o + \bar D_o)\trans \Gamma^* (D_o + \bar D_o) +  (B + \bar B)\trans \Gamma^* + \Gamma(B + \bar B)\\
& - \frac{\gamma}{2} \bar K \trans \Sigma^{-1} \bar K + \bar K\trans F_o\trans  (\Gamma^* - \Lambda^*)  F_o \bar K + 2(Z_t^*)\trans \bar K +  M + \bar M  \Big)\bar\mu \\
& + \bar\mu\trans \Big( \zeta' +  (B + \bar B)\trans \zeta  + 2 \Gamma^* b_0 + 2(D + \bar D)\trans \Lambda^* \vartheta + 2 (D_o  + \bar D_o)\trans \Lambda^* \vartheta_o  + 2 \big(FC + (D + \bar D)\trans \Lambda^* F\\
& + (D_o + \bar D_o)\trans \Gamma^* F_o\big) K_0  + \bar K\trans Y_t\trans  + \bar K\trans \big(-\gamma\Sigma^{-1} + F_o\trans (\Gamma^* - \Lambda^*) F_o \big) K_0 +{O}\Big) \\
& + (\chi^*)' + \vartheta\trans \Lambda^*\vartheta  + \vartheta_o\trans \Gamma^* \vartheta_o +  b_0\trans \zeta^* + \big(Y_t\big)\trans K_0 - \frac{\gamma}{2} K_0\trans \Sigma^{-1} K_0 + K_0\trans F_o \trans (\Gamma^* -\Lambda^*) F_o K_0 \\
&+ \frac{\gamma}{2}\log  \Big((2\pi)^{p} {\rm det}(\Sigma)\Big)  =0.
\end{align*}
Setting the coefficients of the terms ${\rm Var}(\mu)(\cdot)$, $\bar\mu\trans(\cdot)\bar\mu$, $\bar\mu$ to be zero, we arrive at the system of ODEs for $\Lambda^*(t), \Gamma^*(t), \zeta^*(t)$ and $\chi^*(t)$. It is known that, see e.g. \cite{Wonham1968}, \cite{Yong2013}, under the condition {\bf (H)},
for some $\delta >0$, the matrix Riccati equations \eqref{finite-ODELambda}-\eqref{finite-ODEGamma} admit the unique solution $(\Lambda^*, \Gamma^*)$ valued in symmetric negative definite matrices. Given the existence of $(\Lambda^*, \Gamma^*)$, we also have the existence of solution to the system of linear ODEs for $(\zeta^*, \chi^*)$.

%
%

Finally, we verify Assumption \ref{assum:2nd-mu-derivative-J} (ii). Denote
\begin{align*}
 \bar{\bm h} :=& \int_{\R^d}\int_{\R^ p} a {\bm h}(a|x)da \mu(dx) = \E^e_{\mu, {\bm h}}[a^{\bm h}],\\
{\rm Var}({\bm h})(\Lambda^*) :=& \int_{\R^d} \int_{\R^p} \big(a - \bar{\bm h}\big)\trans \Lambda^* \big(a - \bar{\bm h}\big) {\bm h}(a|x)da \mu(dx)
= \E^e_{\mu, {\bm h}}[(a^{\bm h}) \trans \Lambda^* a^{\bm h}] - \E^e_{\mu, {\bm h}}[a^{\bm h}]\trans \Lambda^* \E_{\mu, {\bm h}}[a^{\bm h}].
\end{align*}
By \eqref{equ:F}, we have that
\begin{align*}
\mathscr{H}(t, \mu, {\bm h}) &= {\rm Var}({\bm h})(U_t^*)+ \bar{\bm h} \trans V_t^* \bar{\bm h} +  \int_{\R^d}\int_{\R^p} a \trans \big((S_t^*)\trans (x - \bar\mu) + (Z_t^*)\trans \bar\mu + Y_t^*\big) {\bm h}(a|x)da\mu(dx)\\
& \;\;\;+ G(t, \mu),
\end{align*}
where $G$ is independent of ${\bm h}$.
For every $x \in \R^d$, we denote ${\bm h}_\theta(\cdot|x)$ as the law of the interpolated random variable $\phi_{{\bm h}^\theta}(x, U) = (1 - \theta) \phi_{{\bm h}_1}(x, U) + \theta \phi_{{\bm h}_0}(x, U)$. Thus $\mathscr{H}(t, \mu, {\bm h}_\theta)$ can be written in terms of $\phi_{{\bm h}^\theta}(x, U)$ that
\begin{align*}
\mathscr{H}(t, \mu, {\bm h}_\theta) &  = \E^e \big[\big(\phi_{{\bm h}^\theta}(\xi, U) - \E^e[\phi_{{\bm h}^\theta}(\xi, U)]\big)\trans U_t^* \big(\phi_{{\bm h}^\theta}(\xi, U) - \E^e[\phi_{{\bm h}^\theta}(\xi, U)]\big)\big]\\
&  + \E^e\big[\phi_{{\bm h}^\theta}(\xi, U)]\trans V_t^* \E^e[\phi_{{\bm h}^\theta}(\xi, U)\big]\\
& +  \E^e [\phi_{{\bm h}^\theta}(\xi, U)\trans \big((S_t^*)\trans (\xi - \bar\mu) + (Z_t^*)\trans \bar\mu + Y_t^*\big)\big]
+ G(t, \mu)\\
& \geq (1 - \theta) \mathscr{H}(t, \mu, {\bm h}_0) + \theta \mathscr{H}(t, \mu, {\bm h}_1),
\end{align*}
which implies that $\mathscr{H}(t, \mu, {\bm h})$ is displacement concave in view of $U_t^* \preceq 0$ and $V_t^* \preceq 0$.

By \cite{Villani09}, $\Ec(t, \mu, {\bm \pi})$ is also displacement concave. We thus conclude that $\mathscr{H}^\gamma(t, \mu, {\bm h})$ is displacement concave in ${\bm h}$
 and  Assumption \ref{assum:2nd-mu-derivative-J} (ii) holds.
\end{proof}

\vspace{0.2in}
\noindent
\textbf{Acknowledgement}:
X. Wei is supported by National Natural Science Foundation of China grant under no.12201343 and no.12571509.  X. Yu is supported by the Hong Kong RGC General Research Fund (GRF) under grant  no. 15211524.

\appendix

\section{Cooperative Mean-Field $N$-Agent Game}\label{appendix:N-player}
Here, we recall the cooperative $N$-agent game that is coordinated by a social planner in the continuous-time entropy regularized setting.

The state $X_t^{j, \Dc, \bm \pi}$ of agent $j \in \{1, \ldots, N\}$ satisfies the SDE
\begin{align}\label{agent-j-dynamics}
dX_s^{j, \Dc, \bm \pi} &= b(s, X_s^{j, \Dc, \bm \pi}, \mu_s^{N, \Dc, \bm \pi},  a_{\delta(s)}^{j, \Dc, \bm \pi}) ds + \sigma(s, X_s^{j, \Dc, \bm \pi}, \mu_s^{N, \Dc, \bm \pi}, a_{\delta(s)}^{j, \bm \pi}) dW_s^j \\
& \;\;\;\;\;+\sigma_o(s, X_s^{j, \Dc, \bm \pi}, \mu_s^{N, \Dc, \bm \pi}, a_{\delta(s)}^{j, \Dc, \bm \pi}) dB_s, \; X_t^{j, \Dc, \bm \pi} = x^j,\nonumber
\end{align}
where $W^1, \ldots, W^N$ are independent Brownian motions, and independent of $B$, and $\mu_s^{N, \Dc, \bm \pi} = \frac{1}{N} \sum_{j=1}^N\delta_{X_s^{j, \Dc, \bm \pi}}$ is the empirical measure, and $a_{\delta(s)}^{j, {\bm \pi}} \sim {\bm \pi}(\cdot|\delta(s), X_{\delta(s)}^{j, \Dc, \bm \pi}, \mu_s^{N, \Dc, \bm \pi})$ stands for the discretely sampled actions. The expected accumulated reward for the agent $j$ is
\begin{align*}
& \E^e\Big[\int_t^T e^{-\beta s} \big(r(X_s^{j, \Dc, \bm \pi}, \mu_s^{N, \Dc, \bm \pi}, a_{\delta(s)}^{j, \bm \pi}) +  \gamma E_{\bm \pi}(\delta(s), X_{j, \Dc, \delta(s)}^{\bm \pi}, \mu_{\delta(s)}^{N, \Dc, \bm \pi}) \big)ds+ g(X_T^{j, \Dc, \bm \pi}, \mu_T^{N, \Dc, \bm \pi})\Big].
\end{align*}
The learning procedure for the cooperative $N$-agent game is as follows. At each time $s \in [t, T]$, each agent $j$ observes the empirical measure $\mu_s^{N, \Dc, \bm \pi}$ and takes the action $a_{\delta(s)}^{j, \Dc, \bm \pi}$ according to the policy ${\bm \pi}: [0, T] \times \R^d \times \Pc_2(\R^d) \to \Pc(\Ac)$ assigned by the social planner. His state $X_s^{j, \Dc, \bm \pi}$ evolves according to \eqref{agent-j-dynamics} and he will receive an individual reward $r(X_s^{j, \Dc, \bm \pi}, \mu_s^{N, \Dc, \bm \pi}, a_{\delta(s)}^{j, \Dc, \bm \pi})$ via the interaction with the environment. 

At the social planner's level, she selects the policy ${\bm \pi}$, assigns it to all agents and observes the evolution of the empirical measure $\mu_s^{N, \Dc, \bm \pi}$ over time, and obtains the aggregated reward $\frac{1}{N} \sum_{j=1}^N r(X_s^{j, \Dc, \bm \pi}, \mu_s^{N, \Dc, \bm \pi}, a_{\delta(s)}^{j, \bm \pi})$ so as to ultimately maximize the overall aggregated reward. As $N \to + \infty$, this formulation leads to an exploratory MFC problem as described in Section \ref{sec:exploratory-form}.

\section{Heuristical Derivation of Relaxed Control Formulation}\label{sec:exploratory-average}
In this section, we discuss the corresponding relaxed formulation of MFC with controlled common noise. Let $\Cc_c^\infty(\R^d \times \R^m \times \R^n)$ denote the set of infinitely differentiable function $\phi: \R^d \times \R^m  \times \R^n \to \R$ with compact set, and let $D\phi$ and $D^2\phi$ denote gradient and Hessian of $\phi$, respectively. Let $\sigma_i$ and $\sigma_{o, i}$, $1 \leq i \leq d$, denote $i$-th row of $\sigma$ and $\sigma_o$, respectively. Define the infinitesimal generator
\begin{align*}
L_s^{a, x, \mu} \phi &= (b(s, x, \mu, a)\trans, 0_m, 0_n ) D\phi + \frac{1}{2} {\rm Tr}\Big( \left(\begin{matrix}\sigma & \sigma_o\\ I_m & 0_{m \times n} \\ 0_{n \times m} & I_n\end{matrix}\right) \left(\begin{matrix}\sigma\trans & I_{m} & 0_{m \times n}\\ \sigma_o\trans & 0_{n \times m} & I_n\end{matrix}\right) D^2 \phi\Big)\\
& = b(s, x, \mu, a)\trans D_x \phi(x, {\bf w}, {\bf b}) + \frac{1}{2}{\rm Tr}\Big(\big(\sigma\sigma\trans + \sigma_o\sigma_o\trans \big)(s, x, \mu, a)D_{xx} \phi(x, {\bf w}, {\bf b}) + D_{{\bf w} {\bf w}}\phi(x, {\bf w}, {\bf b}) \\
& \;\;\; + D_{{\bf b} {\bf b}} \phi(x, {\bf w}, {\bf b})  + 2\sigma(s, x, \mu, a) D_{x{\bf w}} \phi(x, {\bf w}, {\bf b}) + 2\sigma_o(s, x, \mu, a) D_{x {\bf b}} \phi(x, {\bf w}, {\bf b})\Big).
\end{align*}
For any $\phi \in \Cc_c^\infty(\R^d \times \R^m \times \R^n)$, we define the generator associated with ${\bm \pi}$: $L^{{\bm \pi}, x, \mu}_s \phi(x, {\bf w}, {\bf b}) = \int_{\Ac} L_s^{a, x, \mu} \phi(x, {\bf w}, {\bf b}) {\bm  \pi}(a|s,x, \mu)da$. Heuristically, it is the limit of the infinitesimal generator of the dynamics  \eqref{equ:exploratory_SDE} because the action is sampled from ${\bm \pi}$ independent of $W$ and $B$. 

The rest is to construct a triplet $(X, W, B)$ defined on $(\Omega^e, \Fc^e, \P^e)$ corresponding to the generator $L^{{\bm \pi}, x, \mu} \phi(x, {\bf w}, {\bf b})$, where $(X, W, B)$ satisfy
\begin{align}\label{equ:Xbracket}
\langle d X_s, d X_s\rangle &= \int_{\Ac}\big(\sigma \sigma\trans + \sigma_{o}\sigma_{o}\trans\big)(s, X_s, \mu_s, a){\bm \pi}(a|s, X_s, \mu_s)da,\\
\langle d X_s, d W_s\rangle &= \sigma_{{\bm \pi}}(s,  X_s, \mu_s)ds, \;\langle dX_s, dB_s\rangle =\sigma_{o, {\bm \pi}}(s, X_s, \mu_s)ds,\\
\langle d W_s, dW_s \rangle & = I_{m} ds, \; \langle dB_s, d B_s\rangle = I_{n}ds, \; \langle d  W_s, B_s\rangle = {\bf 0}.\label{WBbracket}
\end{align}
In addition to $B$ and $W$, we add two other $d$-dimensional Brownian motions $\bar B$ and $\bar W$, which are defined on $(\Omega^2, \Fc^2, \P^2)$ and are independent of $B$ and $W$. Recall from \eqref{variance-w.r.t-pi} that ${\rm cov}_{\bm \pi}(\sigma)$ and ${\rm cov}_{\bm \pi}(\sigma_o)$ are positive semidefinite. Hence there exist matrices denoted by ${\rm std}_{\bm \pi}(\sigma)$ and ${\rm std}_{\bm \pi}(\sigma_o)$ such that ${\rm cov}_{\bm \pi}(\sigma)={\rm std}_{\bm \pi}(\sigma){\rm std}_{\bm \pi}(\sigma)\trans$ and ${\rm cov}_{\bm \pi}(\sigma_o)={\rm std}_{\bm \pi}(\sigma_o){\rm std}_{\bm \pi}(\sigma_o)\trans$, and we have
\begin{align}\label{equ:multi_exploratory_HJB}
dX_s^{\bm \pi} & = b_{\bm \pi}(s, X_s^{\bm \pi}, \mu_s^{\bm \pi})ds + \sigma_{{\bm \pi}}(s, X_s^{\bm \pi}, \mu_s^{\bm \pi})d W_s  + \sigma_{o, {\bm \pi}}(s, X_s^{\bm \pi}, \mu_s^{\bm \pi})d  B_s\\
& + {\rm std}_{\bm \pi}(\sigma)(s, X_s^{\bm \pi}, \mu_s^{\bm \pi})d \bar W_s + {\rm std}_{\bm \pi}(\sigma_o)(s, X_s^{\bm \pi}, \mu_s^{\bm \pi}) d \bar B_s. \nonumber
\end{align}
It is readily seen that \eqref{equ:multi_exploratory_HJB} satisfies \eqref{equ:Xbracket}-\eqref{WBbracket}, and hence \eqref{equ:multi_exploratory_HJB} corresponds to the controlled martingale problem with the  generator $L^{{\bm \pi}, x, \mu} \phi(x, {\bf w}, {\bf b})$.

\end{document}